\def\ess~inf{\mathop{\rm ess~inf}}
\numberwithin{equation}{section}
\newenvironment{key words}{\emph{\texttt{Keywords}}\mbox{  }}{ }
\newtheorem{theorem}{Theorem}[section]
\newtheorem{lemma}[theorem]{Lemma}
\newtheorem{proposition}[theorem]{Proposition}
\newtheorem{definition}[theorem]{Definition}
\renewenvironment{proof}{\noindent{\textbf{Proof.}}}{\hfill$\Box$}
\theoremstyle{remark}
\newtheorem{remark}[theorem]{\textbf{Remark}}
\theoremstyle{plain}
\newcommand{\Rmnum}[1]{\expandafter\@slowromancap\romannumeral #1@}
\begin{document}

\fancyhf{}

\fancyhead[EC]{W. Li and H. Wang}

\fancyhead[EL]{\thepage}

\fancyhead[OC]{Maximal functions related to homogeneous hypersurfaces in $\mathbb{R}^3$}

\fancyhead[OR]{\thepage}

\renewcommand{\headrulewidth}{0.5pt}
\renewcommand{\thefootnote}{\fnsymbol {footnote}}

\title
{\bf{Maximal functions
related to homogeneous  hypersurfaces in $\mathbb{R}^3$} }

\author{Wenjuan Li$^{\textrm{a}}$ and Huiju Wang$^{\textrm{b}}$\\
 \small{a. Northwestern Polytechnical University, Xi'an, 710129, China}  \\
\small{b. Henan University, Kaifeng, 475000, China} }


\date{}
 \maketitle

{\bf Abstract:} We study maximal functions related to homogeneous polynomial hypersurfaces in $\mathbb{R}^3$.
In a sense made precise in this paper, the region of $(p,q)$ for which we obtain $L^p\rightarrow L^q$ boundedness is optimal up to the endpoints for the corresponding local maximal operators. The boundedness exponents depend explicitly on both the height of the hypersurface and the type of the curve determined by the level set. As a corollary, we obtain $L^p$-estimates and weighted norm inequalities for the associated global maximal functions. Moreover, we also obtain optimal $L^{p}$-estimates for the global maximal operators associated with homogeneous polynomial hypersurfaces without transversality condition in $\mathbb{R}^{3}$.

{\bf Keywords:} Maximal function; height of hypersurfaces; vanishing Gaussian curvature.

{\bf Mathematics Subject Classification}: 42B20, 42B25.

 \tableofcontents
\section{Introduction}

Let $S$ be a smooth hypersurface in $\mathbb{R}^n$ and $\eta\in C_0^{\infty}(S)$ be a smooth non-negative function with compact support. Then the associated averaging operator is defined by
\begin{equation}\label{def1}
A_{t}f(y)=\int_{S}f(y-tx)\eta(x)d\sigma(x), \quad t>0,
\end{equation}
where $d\sigma$ denotes the surface measure on $S$. A fundamental and still largely open problem is to characterize the $L^p(\mathbb{R}^n)$-boundedness of the global maximal operator associated to the hypersurface where the Gaussian curvature at some points is allowed to vanish. Iosevich \cite{I} gave a general  answer for the $2$-dimensional case, i.e. $S$ is  a curve of finite type in $\mathbb{R}^2$. For the higher dimensional case, one can see the works by Buschenhenke--Dendrinos--Ikromov--M\"{u}ller \cite{BIM1}, Buschenhenke--Ikromov--M\"{u}ller \cite{BIM2}, Greenleaf \cite{G}, Ikromov--M\"{u}ller \cite{IM,IM1}, Ikromov--Kempe--M\"{u}ller \cite{IKM1,IKM2}, Iosevich--Sawyer \cite{iS1,iS2}, Li \cite{L}, Zimmermann \cite{Z} etc.

However, much less is known about the $L^p \rightarrow L^q$ boundedness $(p\leq q)$ for the local maximal operator $\mathcal{M}_{loc}$ defined by
 \[\mathcal{M}_{loc}f(y)=\sup_{t\in [1,2]} |A_tf(y)|, \quad y\in \mathbb{R}^n\]
 associated with a hypersurface $S$ whose Gaussian curvature vanishes at some points, especially when $S\subset \mathbb{R}^n$, $n\geq 3$. Once we obtain the $L^p \rightarrow L^q$ boundedness for local maximal operators $\mathcal{M}_{loc}$, then the weighted estimates will follow for the corresponding global maximal operators by the methodology of sparse domination (see Section 1.2 in \cite{LWZ}). Specially, Dendrinos--Zimmermann \cite{DZ} and Schwend \cite{Schwend} obtained $L^p \rightarrow L^q$ estimates for averaging operators associated to mixed homogeneous polynomial hypersurfaces in $\mathbb{R}^3$. For more details on the $L^p \rightarrow L^q$  boundedness of averaging operators over hypersurfaces, we refer to the references in \cite{DZ,Schwend}.

In this paper, we   consider maximal operators along the hypersurfaces
\[S:=\{(x_{1},x_{2}, \Phi(x_{1},x_{2}) +c): (x_{1},x_{2}) \in U\},\]
 where $c \in \mathbb{R}$, $U$ is a small neighborhood of the origin in $\mathbb{R}^2$. $\Phi$ is a real-valued polynomial, which is homogeneous of degree $m$. We always assume that $m \ge 2$ in this paper. Then applying polynomial factorization, we write $\Phi$ as
\begin{equation}\label{Phi}
\Phi(x_{1},x_{2})=c_{\Phi}x_{1}^{\nu_{1}}x_{2}^{\nu_{2}} \Pi_{h=1}^{N}(x_{2}-\lambda_{h}x_{1})^{n_{h}},
\end{equation}
where  $N \ge 1$, distinct $\lambda_{h} \in \mathbb{C}  \backslash \{0\}$ and multiplicities $n_{h} \in \mathbb{N}\backslash \{0\}$ with $\nu_{1}, \nu_{2} \in \mathbb{N}$,
$\nu_{1}+\nu_{2}+\sum_{h=1}^{N} n_{h}= m$. We consider the following three cases.

\textbf{Case (i):} One of $\nu_{1},\nu_{2}$ equals $m$. Without loss of generality, we may assume that $\nu_{2}=m$, then   \[\Phi(x_{1},x_{2}) = c_{\Phi}x^{m}_{2}.\]

\textbf{Case (ii):} There exists some $1 \le h\le N$ such that $n_{h} =m$, denote such $h$ by $h_0$,
 then
 \begin{equation}\label{caseii}
 \Phi(x_{1},x_{2}) = c_{\Phi}(x_{2}-\lambda_{h_{0}}x_{1})^{m}.
 \end{equation}
 Since $\Phi$ is a real-valued polynomial, if $\lambda_{h_{0}}$ is complex, then its complex conjugate is also a root of $\Phi$. This contradicts the fact that the polynomial defined by (\ref{caseii}) has exactly one root. Hence, we have $\lambda_{h_{0}} \in \mathbb{R}$. Changing variables, then we have
 \[\Phi(z_{1}, z_{2}+\lambda_{h_{0}}z_{1})=c_{\Phi}z^{m}_{2}.\]

\textbf{Case (iii):} $0 \le \nu_{1} <m$, $0 \le \nu_{2} <m$ and $0 \le n_{h} <m$.

In Case (i) and Case (ii), we reduce the problem to considering the boundedness of the maximal operator related to
 \[S =\{(x_{1},x_{2}, x^{m}_{2}+c): (x_{1},x_{2}) \in U\},\]
which has been extensively  studied in \cite{IKM2,Z,L,LWZ}. Hence, in this paper, we concentrate on the Case (iii).

According to \cite{IM}, when represented in adapted coordinates, the height of an analytic function $\Phi$ can be directly determined from its Newton polyhedron through the quantity known as the Newton distance. In particular, the height in case of a homogeneous polynomial  $\Phi$ can be characterized more clearly by
\[h_{\Phi}:= \max\{\frac{m}{2}, \textsf{ordd}\hspace{0.1cm} \Phi \},\]
where $\textsf{ordd} \hspace{0.1cm}\Phi = \sup_{x \in S^{1}}\textsf{ordd} \hspace{0.1cm}\Phi (x)$, $S^{1}$ is the unit circle in $\mathbb{R}^{2}$ and $\textsf{ordd} \hspace{0.1cm}\Phi (x)$ is the smallest non-negative integer $j$ such that the $j$-th order total derivative of $\Phi$ at $x$ is non-zero. For more details, one can see Corollary 3.4 and the formula (2.11) in \cite{IM}.

\subsection{$L^{p} \rightarrow L^{q}$ estimate of the local maximal operator}
Let $H\Phi$ be the determinant of the Hessian matrix of $\Phi$,  we have
\begin{equation}\label{HPhi0}
H\Phi(x_{1},x_{2}) = x^{\alpha_{1}}_{1}\Pi_{\lambda \in R}(x_{2}-\lambda x_{1})^{w_{\lambda}}Q(x_{1},x_{2}),
\end{equation}
where $R$ denotes all the real roots of $H\Phi$ (including possibly $\lambda =0$), $Q(x_{1},x_{2}) \neq 0$ provided that $(x_{1},x_{2}) \neq (0,0)$. We define
\[L_{\lambda} := \{(x_1,x_2): x_{2}=\lambda x_{1}\},\quad \lambda\in R,\]
and
\begin{equation}
L_{\infty}:=
\begin{cases}
\{(x_1,x_2): x_{1}=0\}, \:\:\:\ \alpha_{1} \neq 0,\\
\emptyset, \:\:\:\:\:\:\:\:\:\:\:\:\:\:\:\:\:\:\:\:\:\:\:\:\:\:\:\:\:\:\:\:\:\:\:\:\:\  \textmd{else}. \\
\end{cases}
\end{equation}
Let $Z_{H\Phi}$ be the zero set of $H\Phi$. Then
\[Z_{H\Phi} =\cup_{\lambda \in R \cup \{\infty\}}L_{\lambda}. \]

For $\epsilon>0$, and $\lambda\in R$, we denote
$$\Gamma(\lambda):=\{(x_1,x_2): \angle((x_1,x_2),(1,\lambda))< \epsilon \} \cup \{(x_1,x_2): \angle((x_1,x_2),(-1,-\lambda))< \epsilon \},$$
where $\angle(V_1,V_2)$ means the angle between the vectors $V_1$ and $V_2$ in $\mathbb{R}^2$, and for $\lambda=\infty$,
$$\Gamma(\lambda):=\{(x_1,x_2): \angle((x_1,x_2),(0,1))< \epsilon \} \cup \{(x_1,x_2): \angle((x_1,x_2),(0,-1))< \epsilon \}.$$
We choose $\epsilon > 0$ sufficiently small such that $\Gamma(\lambda) \cap \Gamma(\lambda^{\prime}) = \emptyset$ whenever $\lambda \neq \lambda^{\prime}$, then
$$\Gamma_{\lambda} \cap Z_{H\Phi} = L_{\lambda} \backslash \{(0,0)\}.$$

We decompose $\mathbb{R}^{2}\backslash \{(0,0)\} $ into conic sectors $\Gamma_{j}$, $j=1,2,\cdot \cdot \cdot, J$, where  either \\
\textbf{(a)} $\Gamma_{j}$ equals one of $\Gamma(\lambda), \lambda\in R\cup \{\infty\}$, or \\
\textbf{(b)} $H\Phi(x_{1},x_{2})$ does not vanish for each $(x_{1},x_{2}) \in \Gamma_{j}$.

 \begin{remark}\label{remarkGammaj}
 For the conic sector $\Gamma_{j}$ in the case (b), we denote the corresponding hypersurface
 \[S_{\Gamma_{j}}=\{(x_{1},x_{2},\Phi(x_{1},x_{2})+c):(x_{1},x_{2}) \in \Gamma_{j}\}.\]
 Since $H\Phi(x_{1},x_{2}) \neq 0$ for each $(x_{1},x_{2}) \in \Gamma_{j}$, by a dyadic decomposition and  scaling arguments, it can be obtained that  the corresponding local maximal operator $\mathcal{M}_{loc,S_{\Gamma_{j}}}$ defined by
 \[\mathcal{M}_{loc,S_{\Gamma_{j}}} = \sup_{t \in [1,2]} \biggl|\int_{S_{\Gamma_{j}}}f(y-tx)d\sigma(x)\biggl|\]
is $L^{p}\rightarrow L^{q}$ bounded if \\
(1) $c=0$, $(\frac{1}{p},\frac{1}{q}) \in \Delta_{0}$ and $\frac{m/2+1}{p}-\frac{m/2+1}{q}-1<0$;  \\
(2) $c \neq 0$, $(\frac{1}{p},\frac{1}{q}) \in \Delta_{0}$ and $\frac{m/2+1}{p}-\frac{1}{q}-1<0$.  \\
Here,  $\Delta_{0}$ is constructed by the interior of the quadrilateral with vertices
\[O_{1}=(0,0),\hspace{0.3cm} O_{2}=(2/3,2/3), \hspace{0.3cm}O_{3}= (2/3,1/3), \hspace{0.3cm}O_{4}=(3/5,1/5),\] and  the segment $O_{1}O_{2}$ with $O_{1}$ included and $O_{2}$ excluded. We will give some details of the proof in Section \ref{proofofmainth}.
\end{remark}

It suffices to consider the conic sectors $\Gamma_{j}$ in the case (a).  For fixed $\lambda$, we consider the local maximal operator $\mathcal{M}_{loc,\lambda}$ defined by
\[\mathcal{M}_{loc,\lambda}f(y) = \sup_{t\in [1,2]} |A_{t,\lambda}f(y) |,\]
where
\begin{equation}\label{avAlambda}
A_{t,\lambda}f(y) =  \int_{S_{\lambda}}f(y-tx)d\sigma_{\lambda}(x),
\end{equation}
and
\[S_{\lambda}= \{(x_{1},x_{2},\Phi(x_{1},x_{2})+c):(x_{1},x_{2}) \in \Gamma_{\lambda}\},\]
 $d\sigma_{\lambda}$ is the surface measure on $S_{\lambda}$.  We split $\Gamma(\lambda)$ into two types
 according to whether $\Phi$ vanishes along $L_{\lambda} \backslash \{(0,0)\}$ or not, and estimate the corresponding local maximal operator $\mathcal{M}_{loc,\lambda}$  respectively.

\subsubsection{Type A: $\Phi$ vanishes along $L_{\lambda}\backslash \{(0,0)\}$}
 We say that $\Gamma(\lambda)$ is of Type A provided that $\Phi$ vanishes along $L_{\lambda}\backslash \{(0,0)\}$. If $ \lambda \neq \infty$, we may assume that
\[\Phi(x_1,x_2)=(x_{2}-\lambda x_{1})^{n_{\lambda}}P(x_{1},x_{2}),\]
where $P$ does not vanish along $L_{\lambda}\backslash \{(0,0)\}$. If $\lambda = \infty$, $\Phi$ vanishes along $x_{1}=0$, we set $n_{\lambda} = \nu_{1}$.
We have the following $L^{p} \rightarrow L^{q}$ estimate for $\mathcal{M}_{loc, \lambda}$.
\begin{theorem}\label{mainth1+}
Denote $h_{\lambda} = \max\{m/2, n_{\lambda}\}$.

(1) When $c = 0$,  then $\|\mathcal{M}_{loc,\lambda}\|_{L^{p}\rightarrow L^{q}} < \infty$ if
\[(\frac{1}{p},\frac{1}{q}) \in   \{(\frac{1}{p},\frac{1}{q}) \in \Delta_{0}: \frac{h_{\lambda}+1}{p} - \frac{h_{\lambda}+1}{q}- 1 <0\};\]

(2) When $c \neq 0$, then  $\|\mathcal{M}_{loc,\lambda}\|_{L^{p}\rightarrow L^{q}} < \infty$ if
\[(\frac{1}{p},\frac{1}{q}) \in   \{(\frac{1}{p},\frac{1}{q}) \in \Delta_{0}: \frac{h_{\lambda}+1}{p}-\frac{1}{q}-1<0\}.\]
\end{theorem}

According to Theorem \ref{apth1} below, the results in Theorem \ref{mainth1+}  are sharp up to endpoints.

\begin{theorem}\label{apth1}
Assume that $\lambda \in R$  such that $\Gamma(\lambda)$ is of Type A.   Then the following conditions are necessary for $\mathcal{M}_{loc,\lambda}$ to be $L^{p} \rightarrow L^{q}$ bounded.

\textbf{(1)} 
$(\frac{1}{p},\frac{1}{q}) \in \overline{\Delta_{0}}$, $\overline{\Delta_{0}}$ is the closure of $\Delta_{0}$.   \\
\textbf{(2)} When $c=0$, $\frac{h_{\lambda}+1}{p}-\frac{h_{\lambda}+1}{q}-1 \le 0$ is necessary for $\mathcal{M}_{loc,\lambda}$ to be $L^{p}  \rightarrow L^{q} $ bounded; when $c \neq 0$, $\frac{h_{\lambda}+1}{p}-\frac{1}{q}-1 \le 0$ is  necessary for $\mathcal{M}_{loc,\lambda}$ to be $L^{p}  \rightarrow L^{q} $ bounded.
\end{theorem}

 Let $Z_{\Phi}$ be the zero set of $\Phi$. If $Z_{H\Phi} \subset Z_{\Phi}$, then the conic sectors $\Gamma(\lambda)$, $\lambda\in R\cup\{\infty\}$, are all of Type  A. We can obtain the following proposition.
\begin{proposition}\label{muphihphi}
If there is no $\Gamma(\lambda)$ that is of Type A, we have $\frac{m}{2} = h_{\Phi}$.
 Otherwise, it can be proved that
\[\max \bigl\{h_{\lambda}: \Gamma(\lambda) \text{ is of Type A} \bigl\} =h_{\Phi}.\]
\end{proposition}

 By Theorem \ref{mainth1+}, Theorem \ref{apth1} and Proposition \ref{muphihphi}, there holds the following Theorem \ref{mainth1}.

\begin{theorem}\label{mainth1}
Assuming that $Z_{H\Phi} \subset Z_{\Phi}$, \\
(1) when $c = 0$,  then $\|\mathcal{M}_{loc}\|_{L^{p}\rightarrow L^{q}} <\infty$ if
\begin{equation}\label{Delta1}
(\frac{1}{p},\frac{1}{q}) \in \widetilde{\Delta_{1}}= \{(\frac{1}{p},\frac{1}{q}) \in \Delta_{0}: \frac{h_{\Phi}+1}{p} - \frac{h_{\Phi}+1}{q}- 1 <0\};
\end{equation}
(2) when $c \neq 0$, then  $\|\mathcal{M}_{loc}\|_{L^{p}\rightarrow L^{q}} < \infty$ if
\begin{equation}\label{Delta2}
(\frac{1}{p},\frac{1}{q}) \in \widetilde{\Delta_{2}}=  \{(\frac{1}{p},\frac{1}{q}) \in \Delta_{0}: \frac{h_{\Phi}+1}{p}-\frac{1}{q}-1<0\}.
\end{equation}
(3) The $L^{p} \rightarrow L^{q}$ boundedness  region in part (1) and part (2) cannot be improved up to the boundary.
\end{theorem}


Notice when $h_{\Phi}=3/2$, the results  in Theorem \ref{mainth1} (1) coincide with that of the local spherical maximal operators (see the article \cite{WS}), and when $h_{\Phi}>3/2$, the results in Theorem \ref{mainth1} depend  heavily on the height of $\Phi$, see Figure 1 and Figure 2.

\begin{figure}[H]
\begin{center}
\begin{tikzpicture}[scale=6.5]
    \draw[->] (0,0) -- (1,0) node[right] {\(\frac1p\)};
    \draw[->] (0,0) -- (0,1) node[above] {\(\frac1q\)};

     \draw[densely dotted] (0,0) -- (0.8,0.8);
      \draw[green,densely dotted] (0.2,0.03) -- (0.85,0.7);
      \draw[red,densely dotted] (0.42,0.1) -- (0.85,0.555);
      \draw[densely dotted] (0,0) -- (0.7,0.3);
     \draw[densely dotted] (0.8,0.8) -- (0.8,0.5);
     \draw[blue,densely dotted] (0.55,0.14) -- (0.78,0.39);
     \draw[densely dotted] (0.7,0.3) -- (0.8,0.5);

     straightlne
     \draw[thick] (0,0) -- (0.8,0.8);

    \fill[blue, opacity=0.8] (0,0) -- (0.8,0.8) -- (0.8,0.65) -- (0.285,0.12) -- cycle;

   \draw[thick] (0.8,0.8) circle (0.3pt);
   \draw[thick] (0.8,0.5) circle (0.3pt);
   \draw[thick] (0.7,0.3) circle (0.3pt);

    \node[below left] at (0,0) {\(O_1\)};
     \node[above] at (0.81,0.805) {\(O_2\)};
    \node[right] at (0.8,0.48) {\(O_3\)};
    \node[below] at (0.7,0.3) {\(O_4\)};
    \node[right] at (0.83,0.75) {\(h_{\Phi}>2\)};
    \node[right] at (0.83,0.58) {\(h_{\Phi}=2\)};
    \node[right] at (0.78,0.39) {\(h_{\Phi}=3/2\)};
    \node[white,right] at (0.38,0.35) {\(\widetilde{\Delta_{1}}\)};
\end{tikzpicture}
\caption{The range of $\widetilde{\Delta_{1}}$ when the height $h_{\Phi}$ changes, and  $\Delta_{0}$ is the range of the boundedness for local spherical maximal operators, which
is constructed by the interior of the quadrilateral with vertices
$O_{1},O_{2}, O_{3}, O_{4}$, and  the segment $O_{1}O_{2}$ with $O_{1}$ included and $O_{2}$ excluded.}
\end{center}
\end{figure}

\begin{figure}[H]
\begin{center}
\begin{tikzpicture}[scale=6.5]
    \draw[->] (0,0) -- (1,0) node[right] {\(\frac1p\)};
    \draw[->] (0,0) -- (0,1) node[above] {\(\frac1q\)};

     \draw[densely dotted] (0,0) -- (0.8,0.8);
     \draw[green,densely dotted] (0.26,0.03) -- (0.32,0.42);
      \draw[red,densely dotted] (0.42,0.1) -- (0.58,0.7);
      \draw[densely dotted] (0,0) -- (0.7,0.3);
     \draw[densely dotted] (0.8,0.8) -- (0.8,0.5);
     \draw[blue,densely dotted] (0.55,0.14) -- (0.86,0.95);
     \draw[densely dotted] (0.7,0.3) -- (0.8,0.5);

     \draw[thick] (0,0) -- (0.8,0.8);

    \fill[blue, opacity=0.8] (0,0) -- (0.3,0.3) -- (0.275,0.12)  -- cycle;

   \draw[thick] (0.8,0.8) circle (0.3pt);
   \draw[thick] (0.8,0.5) circle (0.3pt);
   \draw[thick] (0.7,0.3) circle (0.3pt);

    \node[below left] at (0,0) {\(O_1\)};
     \node[right] at (0.81,0.805) {\(O_2\)};
    \node[right] at (0.8,0.48) {\(O_3\)};
    \node[below] at (0.7,0.3) {\(O_4\)};
    \node[above] at (0.32,0.42) {\(h_{\Phi}>2\)};
    \node[above] at (0.58,0.7) {\(h_{\Phi}=2\)};
    \node[above] at (0.86,0.95) {\(h_{\Phi}=3/2\)};
    \node[white,right] at (0.18,0.16) {\(\widetilde{\Delta_{2}}\)};
\end{tikzpicture}
\caption{The range of $\widetilde{\Delta_{2}}$ when the height $h_{\Phi}$ changes.}
\end{center}
\end{figure}

\subsubsection{Type B: $\Phi$ does not vanish along $L_{\lambda}\backslash \{(0,0)\}$}\label{CasB}
We say that $\Gamma(\lambda)$ is of Type B if $\Phi$ does not vanish along $L_{\lambda}\backslash \{(0,0)\}$. In this case, the  type of the curves determined by the level set will play an essential role in the $L^{p}\rightarrow L^{q}$ estimate for $\mathcal{M}_{loc,\lambda}$. We will now provide a detailed description of the curve determined by the level set.

We choose $(x_{1}^{0},x_{2}^{0}) \in L(\lambda)\backslash\{(0,0)\}$ such that $H\Phi(x^{0}_{1},x^{0}_{2}) = 0$ and $\Phi(x^{0}_{1},x^{0}_{2}) = 1$.

$\bullet$ When $\lambda <\infty$, we change variables  $x_{1} \rightarrow z_{1}$, $x_{2} \rightarrow z_{2}+\lambda z_{1}$, and denote $z^{0}_{1}=x^{0}_{1}$, $z^{0}_{2}=0$, $\Phi_{\lambda}(z_{1},z_{2}):= \Phi(z_{1},z_{2}+\lambda z_{1})$.

$\bullet$  When $\lambda = \infty$, we change variables $x_{1} \rightarrow z_{2}$, $x_{2} \rightarrow z_{1}$, and denote $z^{0}_{1}=x^{0}_{2}$, $z^{0}_{2}=0$, $\Phi_{\infty}(z_{1},z_{2}):= \Phi(z_{2},z_{1})$.

For fixed $\lambda$, since $\Phi_{\lambda}(rz_{1},rz_{2}) = r^{m}\Phi_{\lambda}(z_{1},z_{2})$ for each $r>0$,
\[z^{0}_{1}\partial_{1}\Phi_{\lambda}(z^{0}_{1},0) = m\Phi_{\lambda}(z^{0}_{1},0) = m \Phi(x^{0}_{1}, x^{0}_{2}) \neq 0, \]
which implies $\partial_{1}\Phi_{\lambda}(z^{0}_{1},0) \neq 0$. By implicit function theorem, there is a unique local $C^{\infty}$-curve $\gamma:z_{2}\rightarrow (\Psi(z_{2}),z_{2}), |z_{2}|< \epsilon_{0} \ll 1$ such that $\Psi(0) = z^{0}_{1}$ and
\[\Phi_{\lambda}(\Psi(z_{2}),z_{2}) =\Phi_{\lambda}(z^{0}_{1},0)=\Phi(x^{0}_{1}, x^{0}_{2})=1, \quad |z_{2}|< \epsilon_{0}.\]
We observe that the conic sector $\Gamma(\lambda)$ has changed into $\Gamma(0)$. If appropriate $\epsilon$ (in the definition of $\Gamma(0)$) is sufficiently small such that $\epsilon<\epsilon_0$, then we have
\[\{\Phi_{\lambda}(z_{1},z_{2}) = 1\}\cap \Gamma(0)= \{(\Psi(z_{2}),z_{2}): |z_{2}| < \epsilon\}. \]
We can obtain the following proposition.
\begin{proposition}\label{level set type}
There exists the smallest integer $M^{\lambda}$ which is no less than $2$ such that $\Psi^{(M^{\lambda}) } (0) \neq 0$. More concretely, by (\ref{HPhi0}), if $\lambda \neq \infty$, then $M^{\lambda} = \omega_{\lambda}+2$; if $\lambda=\infty$, then $M^{\lambda}=\alpha_{1}+2$.
\end{proposition}

 \begin{definition}\label{DeltaM}
 For convenience, we define the region $\Delta_{M}$  for each integer $M \ge 3$  as follows.
 (1) $M \ge 6$, $\Delta_{M}$ is constructed by the interior of the quadrilateral with vertices $P_{1}=(0,0)$, $P_{2}=(\frac{M+1}{2M},\frac{M+1}{2M})$, $P_{3}= (\frac{4}{M+2},\frac{2}{M+2})$, $P_{4}=(\frac{3}{M+2},\frac{1}{M+2})$, and   the segment $P_{1}P_{2}$ with $P_{1}$ included and $P_{2}$ excluded; \\
 (2) $M =5$, $\Delta_{M}$ is constructed by the interior of the pentagon  with vertices $P_{1}=(0,0)$, $P_{2}=(\frac{M+1}{2M},\frac{M+1}{2M})$, $P_{3}= (\frac{2M+4}{5M+2},\frac{M+2}{5M+2})$, $P_{4}=(\frac{3}{M+2},\frac{1}{M+2})$, $P_{5}=(\frac{1}{2},\frac{1}{2}\cdot \frac{M-2}{M+2})$, and  the segment $P_{1}P_{2}$ with $P_{1}$ included and $P_{2}$ excluded;  \\
(3) $M =4$, $\Delta_{M}$ is constructed by the interior of the quadrilateral with vertices $P_{1}=(0,0)$, $P_{2}=(\frac{M+1}{2M},\frac{M+1}{2M})$, $P_{3}= (\frac{2M+4}{5M+2},\frac{M+2}{5M+2})$, $P_{4}=(\frac{3}{M+2},\frac{1}{M+2})$, and  the segment $P_{1}P_{2}$ with $P_{1}$ included and $P_{2}$ excluded; \\
(4) $M =3$, $\Delta_{M}$ is constructed by the interior of the quadrilateral with vertices $P_{1}=(0,0)$, $P_{2}=(\frac{M+1}{2M},\frac{M+1}{2M})$, $P_{3}= (\frac{2M+4}{5M+2},\frac{M+2}{5M+2})$,
$P_{4}=(\frac{3M+6}{8M+4},\frac{M+2}{8M+4})$, and   the segment $P_{1}P_{2}$ with $P_{1}$ included and $P_{2}$ excluded.
\end{definition}

We can obtain the following $L^{p}\rightarrow L^{q}$ estimate for $\mathcal{M}_{loc, \lambda}$.
\begin{theorem}\label{mainth2+}
We have

(1) when $c = 0$,  then $\|\mathcal{M}_{loc,\lambda}\|_{L^{p}\rightarrow L^{q}} < \infty$ if
\[(\frac{1}{p},\frac{1}{q}) \in   \{(\frac{1}{p},\frac{1}{q}) \in \Delta_{M^{\lambda}}: \frac{m/2+1}{p} - \frac{m/2+1}{q}- 1 <0\};\]

(2) when $c \neq 0$,  then $\|\mathcal{M}_{loc,\lambda}\|_{L^{p}\rightarrow L^{q}} < \infty$ if
\[(\frac{1}{p},\frac{1}{q}) \in   \{(\frac{1}{p},\frac{1}{q}) \in \Delta_{M^{\lambda}}: \frac{m/2+1}{p}-\frac{1}{q}-1<0\}.\]
\end{theorem}

\begin{remark}\label{equi-reg}
The proof of Theorem \ref{apth2} below implies that $M^{\lambda} \le m$, see the inequality (\ref{Mm}). So we can obtain that, in the part (2) of Theorem \ref{mainth2+}, the $L^p\rightarrow L^q$ boundedness region is equivalent to
\[\{(\frac{1}{p},\frac{1}{q}): \frac{1}{q}\leq\frac{1}{p}<\frac{3}{q}, \quad \frac{m/2+1}{p}-\frac{1}{q}-1<0\}.\]
\end{remark}

For convenience, we denote some lines that play a crucial role in constructing the region $\Delta_{M^{\lambda}}$ for each integer $M^{\lambda} \ge 3$. Set
\begin{equation*}
\mathfrak{L}_1:=\{(x,y):x=\frac{M^{\lambda}+1}{2M^{\lambda}}\},\quad \mathfrak{L}_2:=\{(x,y):y=\frac{3M^{\lambda}+2}{M^{\lambda}+2}x-1\},
\end{equation*}

\begin{equation*}
\mathfrak{L}_3:=\{(x,y):(\frac{M^{\lambda}}{2}+1)x-(\frac{M^{\lambda}}{2}+1)y-1=0\},\quad \mathfrak{L}_4:=\{(x,y):x=2y\},
\end{equation*}

\begin{equation*}
\mathfrak{L}_5:=\{(x,y):x=3y\}, \quad \mathfrak{L}_6:=\{(x,y):(\frac{m}{2}+1)x-(\frac{m}{2}+1)y-1=0\}.
\end{equation*}

\begin{figure}[H]
\begin{center}
\begin{subfigure}[b]{0.45\textwidth}
\begin{center}
\begin{tikzpicture}[scale=7]
    \draw[->] (0,0) -- (1,0) node[right] {\(\frac1p\)};
    \draw[->] (0,0) -- (0,1) node[above] {\(\frac1q\)};

     \draw[densely dotted] (0,0) -- (0.75,0.75);
     \draw[green,densely dotted] (0.6,0.6) -- (0.49,0.17);
     \draw[purple,densely dotted] (0.09,-0.02) -- (0.75,0.62);
      \draw[red,densely dotted] (0.25,-0.05) -- (0.74,0.405);
     \draw[densely dotted] (0,0) -- (0.75,0.15);
      \draw[densely dotted] (0,0) -- (0.75,0.26);
     \draw[densely dotted] (0.6,0.7) -- (0.6,0.05);

     \draw[thick] (0,0) -- (0.6,0.6);

    \fill[blue, opacity=0.8] (0,0) -- (0.6,0.6) -- (0.555,0.43) -- (0.14,0.028) -- cycle;
    \fill[cyan, opacity=0.8] (0.14,0.028) -- (0.555,0.43) --(0.49,0.174)-- (0.39,0.08) -- cycle;

   \draw[thick] (0.6,0.6) circle (0.3pt);
   \draw[thick] (0.49,0.174) circle (0.3pt);
   \draw[thick]  (0.39,0.08) circle (0.3pt);

    \node[below left] at (0,0) {\(P_1\)};
     \node[right] at (0.6,0.6) {\(P_2\)};
    \node[below ] at (0.53,0.19) {\(P_3\)};
    \node[below] at (0.43,0.09) {\(P_4\)};
   \node[above] at (0.6,0.7) {\(\mathfrak{L}_1\)};
   \node[right] at (0.75,0.62) {\(\mathfrak{L}_6\)};
   \node[right] at (0.74,0.405) {\(\mathfrak{L}_3\)};
   \node[right] at (0.75,0.26) {\(\mathfrak{L}_4\)};
   \node[right] at (0.75,0.18) {\(\mathfrak{L}_5\)};
   \node[white,right,font=\tiny] at (0.275,0.16) {\(m=M^{\lambda}\)};
    \node[white,right,font=\tiny] at (0.34,0.35) {\(m>M^{\lambda}\)};
\end{tikzpicture}
\caption{The case $M^{\lambda}\geq 6$.}
\end{center}
\end{subfigure}
    \hfill
    \begin{subfigure}[b]{0.45\textwidth}
\begin{center}
\begin{tikzpicture}[scale=7]
    \draw[->] (0,0) -- (1,0) node[right] {\(\frac1p\)};
    \draw[->] (0,0) -- (0,1) node[above] {\(\frac1q\)};

     \draw[densely dotted] (0,0) -- (0.75,0.75);
     \draw[green,densely dotted] (0.6,0.6) -- (0.497,0.174);
     \draw[purple,densely dotted] (0.09,-0.02) -- (0.75,0.62);
      \draw[red,densely dotted] (0.26,-0.08) -- (0.75,0.4);
       \draw[blue,densely dotted] (0.36,-0.09) -- (0.75,0.65);
     \draw[densely dotted] (0,0) -- (0.75,0.15);
      \draw[densely dotted] (0,0) -- (0.75,0.26);
     \draw[densely dotted] (0.6,0.7) -- (0.6,0.05);

     \draw[thick] (0,0) -- (0.6,0.6);

    \fill[blue, opacity=0.8] (0,0) -- (0.6,0.6) -- (0.555,0.43) -- (0.14,0.028) -- cycle;
    \fill[cyan, opacity=0.8] (0.14,0.028) -- (0.555,0.43) --(0.497,0.174)-- (0.477,0.13)-- (0.425,0.085) -- cycle;

   \draw[thick] (0.6,0.6) circle (0.3pt);
   \draw[thick] (0.497,0.174) circle (0.3pt);
   \draw[thick]  (0.425,0.085) circle (0.3pt);
   \draw[thick]  (0.477,0.13) circle (0.3pt);

    \node[below left] at (0,0) {\(P_1\)};
     \node[right] at (0.6,0.6) {\(P_2\)};
    \node[left ] at (0.51,0.21) {\(P_3\)};
    \node[below] at (0.43,0.08) {\(P_4\)};
     \node[below] at (0.52,0.165) {\(P_5\)};
     \node[right] at (0.75,0.69) {\(\mathfrak{L}_2\)};
   \node[above] at (0.6,0.7) {\(\mathfrak{L}_1\)};
   \node[right] at (0.75,0.6) {\(\mathfrak{L}_6\)};
    \node[right] at (0.74,0.405) {\(\mathfrak{L}_3\)};
   \node[right] at (0.75,0.26) {\(\mathfrak{L}_4\)};
   \node[right] at (0.75,0.18) {\(\mathfrak{L}_5\)};
   \node[white,right,font=\tiny] at (0.275,0.16) {\(m=M^{\lambda}\)};
    \node[white,right,font=\tiny] at (0.34,0.35) {\(m>M^{\lambda}\)};
\end{tikzpicture}
\caption{The case $M^{\lambda}=5$.}
\end{center}
 \end{subfigure}
\end{center}
\end{figure}

\begin{figure}[H]
\begin{center}
\begin{subfigure}[b]{0.45\textwidth}
\begin{center}
\begin{tikzpicture}[scale=7]
    \draw[->] (0,0) -- (1,0) node[right] {\(\frac1p\)};
    \draw[->] (0,0) -- (0,1) node[above] {\(\frac1q\)};

     \draw[densely dotted] (0,0) -- (0.75,0.75);
     \draw[green,densely dotted] (0.6,0.6) -- (0.497,0.174);
     \draw[purple,densely dotted] (0.09,-0.02) -- (0.75,0.62);
      \draw[red,densely dotted] (0.26,-0.1) -- (0.75,0.38);
       \draw[blue,densely dotted] (0.35,-0.09) -- (0.75,0.61);
     \draw[densely dotted] (0,0) -- (0.75,0.15);
      \draw[densely dotted] (0,0) -- (0.75,0.26);
     \draw[densely dotted] (0.6,0.7) -- (0.6,0.05);

     \draw[thick] (0,0) -- (0.6,0.6);

    \fill[blue, opacity=0.8] (0,0) -- (0.6,0.6) -- (0.558,0.435) -- (0.14,0.028) -- cycle;
    \fill[cyan, opacity=0.8] (0.14,0.028)--(0.558,0.435)--(0.498,0.172)-- (0.45,0.089) -- cycle;

   \draw[thick] (0.6,0.6) circle (0.3pt);
   \draw[thick] (0.497,0.174) circle (0.3pt);
   \draw[thick]  (0.45,0.089) circle (0.3pt);

    \node[below left] at (0,0) {\(P_1\)};
     \node[right] at (0.6,0.6) {\(P_2\)};
    \node[left ] at (0.55,0.23) {\(P_3\)};
    \node[below] at (0.48,0.09) {\(P_4\)};
     \node[right] at (0.75,0.67) {\(\mathfrak{L}_6\)};
   \node[above] at (0.6,0.7) {\(\mathfrak{L}_1\)};
   \node[right] at (0.75,0.58) {\(\mathfrak{L}_2\)};
    \node[right] at (0.74,0.405) {\(\mathfrak{L}_3\)};
   \node[right] at (0.75,0.26) {\(\mathfrak{L}_4\)};
   \node[right] at (0.75,0.18) {\(\mathfrak{L}_5\)};
   \node[white,right,font=\tiny] at (0.275,0.16) {\(m=M^{\lambda}\)};
    \node[white,right,font=\tiny] at (0.34,0.35) {\(m>M^{\lambda}\)};
\end{tikzpicture}
\caption{The case $M^{\lambda}=4$.}
\end{center}
\end{subfigure}
    \hfill
    \begin{subfigure}[b]{0.45\textwidth}
\begin{center}
\begin{tikzpicture}[scale=7]
    \draw[->] (0,0) -- (1,0) node[right] {\(\frac1p\)};
    \draw[->] (0,0) -- (0,1) node[above] {\(\frac1q\)};

     \draw[densely dotted] (0,0) -- (0.75,0.75);
     \draw[green,densely dotted] (0.6,0.6) -- (0.52,0.175);
     \draw[purple,densely dotted] (0.09,-0.02) -- (0.75,0.62);
      \draw[red,densely dotted] (0.255,-0.12) -- (0.75,0.32);
       \draw[blue,densely dotted] (0.35,-0.11) -- (0.75,0.57);
     \draw[densely dotted] (0,0) -- (0.75,0.15);
      \draw[densely dotted] (0,0) -- (0.75,0.26);
     \draw[densely dotted] (0.6,0.7) -- (0.6,0.05);

     \draw[thick] (0,0) -- (0.6,0.6);

    \fill[blue, opacity=0.8] (0,0) -- (0.6,0.6) -- (0.57,0.444) -- (0.14,0.028) -- cycle;
    \fill[cyan, opacity=0.8](0.14,0.028)--(0.57,0.444)--(0.52,0.18) --(0.47,0.093) -- cycle;

   \draw[thick] (0.6,0.6) circle (0.3pt);
   \draw[thick] (0.52,0.18) circle (0.3pt);
   \draw[thick]  (0.47,0.093) circle (0.3pt);

    \node[below left] at (0,0) {\(P_1\)};
     \node[right] at (0.6,0.6) {\(P_2\)};
    \node[above] at (0.52,0.2) {\(P_3\)};
    \node[below] at (0.49,0.09) {\(P_4\)};
     \node[right] at (0.75,0.67) {\(\mathfrak{L}_6\)};
   \node[above] at (0.6,0.7) {\(\mathfrak{L}_1\)};
   \node[right] at (0.75,0.57) {\(\mathfrak{L}_2\)};
    \node[right] at (0.74,0.35) {\(\mathfrak{L}_3\)};
   \node[right] at (0.75,0.26) {\(\mathfrak{L}_4\)};
   \node[right] at (0.75,0.18) {\(\mathfrak{L}_5\)};
    \node[white,right,font=\tiny] at (0.275,0.16) {\(m=M^{\lambda}\)};
    \node[white,right,font=\tiny] at (0.34,0.35) {\(m>M^{\lambda}\)};
\end{tikzpicture}
\caption{The case $M^{\lambda}=3$.}
\end{center}
 \end{subfigure}
\caption{For $m > M^{\lambda}$, the $L^p\rightarrow L^q$ boundedness region in part (1) of Theorem \ref{mainth2+} is represented by the blue region; for the critical case $m = M^{\lambda}$, it is given by the union of the blue and green regions.}
\end{center}
\end{figure}

Theorem \ref{apth2} gives necessary conditions for $\mathcal{M}_{loc,\lambda}$ to be $L^{p}\rightarrow L^{q}$ bounded.
\begin{theorem}\label{apth2}
Suppose that $\lambda \in R$ such that $\Gamma(\lambda)$ is of Type B. Then the following are necessary conditions for $\mathcal{M}_{loc,\lambda}$ to be $L^{p} \rightarrow L^{q}$ bounded.\\
 \textbf{(1)} For $c \neq 0$, $\frac{m/2+1}{p}-\frac{1}{q}-1 \le 0$, and $\frac 1q\leq \frac 1p\leq\frac 3q$. \\
\textbf{(2)}  For $c =0$, $\frac{m/2+1}{p}-\frac{m/2+1}{q}-1 \le 0$. Moreover,\\
\indent\textbf{ (i)} When $M^{\lambda} \ge 6$, $(\frac{1}{p},\frac{1}{q})$ belongs to the closed quadrilateral constructed by the lines $\mathfrak{L}_{1}$,  $\mathfrak{L}_{3}$, $\mathfrak{L}_{5}$, and the diagonal line. \\
\indent\textbf{ (ii)} When $M^{\lambda} = 5$, $(\frac{1}{p},\frac{1}{q})$ belongs to the closed pentagon  constructed by the lines $\mathfrak{L}_{1}$, $\mathfrak{L}_{2}$, $\mathfrak{L}_{3}$, $\mathfrak{L}_{5}$, and the diagonal line. \\
\indent\textbf{ (iii)} When $M^{\lambda} = 4$ or $M^{\lambda} =3$, $(\frac{1}{p},\frac{1}{q})$ belongs to the closed quadrilateral constructed by the lines $\mathfrak{L}_{1}$, $\mathfrak{L}_{2}$,  $\mathfrak{L}_{5}$, and the diagonal line.
\end{theorem}


Notice when $Z_{H\Phi} \cap Z_{\Phi} \subsetneqq Z_{H\Phi}$, there exists at least one of  $\{\Gamma(\lambda)\}_{\lambda\in R\cup\{\infty\}}$,  such that it belongs to Type B. We define
\[ M^{\Phi} := \max\{M^{\lambda}: \Gamma(\lambda) \text{ is of Type B}\}.\]
Then
\[\Delta_{ M^{\Phi}} = \cap_{\{\lambda: \Gamma(\lambda) \text{ is of Type B}\}} \Delta_{M^{\lambda}}. \]
Note that $\Delta_{M^{\Phi}} \subset \Delta_{0}$. Then by Theorem \ref{mainth2+}, Theorem \ref{apth2}, Proposition \ref{muphihphi} and Theorem \ref{mainth1+}, we can obtain the following theorem.

\begin{theorem}\label{mainth2}
Suppose that $Z_{H\Phi} \cap Z_{\Phi} \subsetneqq Z_{H\Phi}$, we have the following results. \\
(1) If $c=0$,  $\|\mathcal{M}_{loc}\|_{L^{p} \rightarrow L^{q}} < \infty$ if
\begin{equation}\label{Delta3}
(\frac{1}{p},\frac{1}{q}) \in \widetilde{\Delta_{3}}=  \{(\frac{1}{p},\frac{1}{q}) \in \Delta_{M^{\Phi}}: \frac{h_{\Phi}+1}{p} - \frac{h_{\Phi}+1}{q}- 1 <0\};
\end{equation}
(2) If $c \neq 0$, $\|\mathcal{M}_{loc}\|_{L^{p}\rightarrow L^{q}} < \infty$ if
\begin{equation}\label{Delta4}
(\frac{1}{p},\frac{1}{q}) \in \widetilde{\Delta_{4}}=  \{(\frac{1}{p},\frac{1}{q}) \in  \Delta_{M^{\Phi}}:  \frac{h_{\Phi}+1}{p}-\frac{1}{q}-1<0\}.
\end{equation}
(3) The region of $L^{p} \rightarrow L^{q}$ boundedness  in the  part (2) is sharp up to the endpoints. Moreover,  the region obtained in the part (1) admits no further extension (excluding extensions to the boundary), with the only possible exception occurring on the open segment connecting $P_{2}$ and $P_{3}$ (with $M:=M^{\Phi}$).
\end{theorem}

 In 2019, Dendrinos and Zimmermann \cite{DZ} established $L^p\rightarrow L^q$ estimates for averaging operators associated with mixed homogeneous polynomial hypersurfaces through oscillatory integral estimates. However, the $L^p\rightarrow L^q$ boundedness ranges they obtained were not optimal in certain cases. Recently, Schwend \cite{Schwend} improved these results to near optimality (except at the endpoints) using geometric methods such as Christ's method of refinement. In \cite{G1}, Greenblatt characterized $L^p$-improving properties of the averaging operator along real-analytic surfaces $\{(t_{1},t_{2},\Phi(t_{1},t_{2})): (t_{1},t_{2}) \in U\subset  \mathbb{R }^{2}\}$, where the determinant of the Hessian matrix of  $\Phi(t_{1},t_{2})$ plays a critical role.

Nevertheless, when considering local maximal operators, the presence of supremum exhibits additional challenges that prevent direct application of the methods from these previous works. In this paper, we overcome these obstacles and establish the $L^p\rightarrow L^q$ boundedness of local maximal operators $\mathcal{M}_{loc}$  by employing resolution of singularities techniques combined with local smoothing estimates. Based on the research presented in this paper, we will further investigate the $L^{p} \rightarrow L^{q}$  boundedness of local maximal operators associated with more general surfaces in our subsequent papers.


\subsection{Weighted $L^{p}$-estimate of the global maximal operator}
Define the global maximal operator  by
\[\mathcal{M}f(y):=\sup_{t >0} |A_{t}f(y)|,\]
where $A_{t}$ is given by (\ref{def1}). The corresponding weighted estimates follow from Theorem \ref{mainth1}, Theorem \ref{mainth2} and the methodology of sparse domination.  One can see \cite{LWZ} and the appendix in this paper for more details.

\begin{theorem}\label{mainth7}
Let $0< p < r <q$, $\omega \in A_{\frac{r}{p}} \cap RH_{\left(\frac{q}{r}\right)'}$, $\alpha=\max \left(\frac{1}{r-p}, \frac{q-1}{q-r} \right)$.
Then there holds the  weighted estimate
\[  \| \mathcal{M} \|_{L^{r}(\omega) \rightarrow L^{r}(\omega)} \le C \left([\omega]_{A_{\frac{r}{p}}} [\omega]_{RH_{\left(\frac{q}{r}\right)'}}\right)^\alpha \]
in each of the following cases: \\
(1) $c =0$, $Z_{H\Phi} \subset Z_{\Phi}$, and  $(\frac{1}{p}, \frac{1}{q}) \in \widetilde{\Delta_{1}}$ given in (\ref{Delta1}); \\
(2)  $c \neq 0$, $Z_{H\Phi} \subset Z_{\Phi}$, and $(\frac{1}{p}, \frac{1}{q}) \in \widetilde{\Delta_{2}}$ given in (\ref{Delta2}); \\
(3)  $c =0$, $Z_{H\Phi} \cap Z_{\Phi} \subsetneqq Z_{H\Phi}$, and $(\frac{1}{p}, \frac{1}{q}) \in \widetilde{\Delta_{3}}$ given in (\ref{Delta3}); \\
(4)  $c \neq 0$, $Z_{H\Phi} \cap Z_{\Phi} \subsetneqq Z_{H\Phi}$, and $(\frac{1}{p}, \frac{1}{q}) \in \widetilde{\Delta_{4}}$ given in (\ref{Delta4}).
\end{theorem}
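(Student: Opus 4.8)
\medskip
\noindent\textbf{Proof idea.}\quad The plan is to upgrade the local $L^{p}\to L^{q}$ estimates of Theorems \ref{mainth1} and \ref{mainth2} to a sparse domination for the global operator $\sup_{t>0}|A_{t}|$, and then to read off the weighted bound from the quantitative weighted theory for sparse forms. Throughout, fix $(\tfrac1p,\tfrac1q)$ in the region $\widetilde{\Delta_{i}}$ relevant to the case at hand; since the hypothesis $0<p<r<q$ forces $p<q$, the point $(\tfrac1p,\tfrac1q)$ lies strictly below the diagonal $P_{1}P_{2}$, so that
\[
3\Big(\tfrac1q-\tfrac1p\Big)<0 .
\]
This negativity is what will make the sum over scales converge. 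Write $\nu:=\eta\,d\sigma$ for the fixed compactly supported density on $S$. Whether $c=0$ or $c\ne0$, the operator $A_{t}$ from (\ref{def1}) is convolution of $f$ with the isotropic dilate $\nu_{t}$ of the fixed measure $\nu$ (the constant $c$ being simply part of the fixed surface $S$), so with $D_{t}h(y):=h(ty)$ we have $A_{t}=D_{1/t}\,A_{1}\,D_{t}$, and hence for every dyadic block
\[
\sup_{t\in[2^{j},2^{j+1}]}|A_{t}f|\;=\;D_{2^{-j}}\Big(\sup_{s\in[1,2]}|A_{s}(D_{2^{j}}f)|\Big).
\]

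From Theorems \ref{mainth1} and \ref{mainth2} we have $\big\|\sup_{s\in[1,2]}|A_{s}f|\big\|_{L^{q}}\lesssim\|f\|_{L^{p}}$, while trivially $\|A_{t}f\|_{L^{1}}\lesssim\|f\|_{L^{1}}$; the factor $\max\{1,c^{1/q}\}$ occurring in Theorem \ref{mainth3} is an absolute constant once the surface is fixed and so is harmless here. Rescaling the first estimate by $D_{2^{j}}$ gives, for every cube $Q$ with $\ell(Q)\sim2^{j}$,
\[
\Big\|\sup_{t\in[2^{j},2^{j+1}]}|A_{t}(f\mathbf{1}_{Q})|\Big\|_{L^{q}(3Q)}\;\lesssim\;2^{3j/q}\,\|D_{2^{j}}(f\mathbf{1}_{Q})\|_{L^{p}}\;\sim\;|Q|^{1/q}\,\langle f\rangle_{p,Q},
\]
where $\langle f\rangle_{p,Q}:=\big(\tfrac1{|Q|}\int_{Q}|f|^{p}\big)^{1/p}$ and we used $\ell(Q)\sim t$ to turn $2^{3j/q}$ into $|Q|^{1/q}$. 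Pairing against $g$ and applying H\"older on $3Q$, this localized single-scale inequality contributes exactly one summand $|Q|\,\langle f\rangle_{p,3Q}\,\langle g\rangle_{q',3Q}$ of a sparse form.

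With these ingredients one runs the stopping-time recursion underlying the sparse domination method (Section~1.2 of \cite{LWZ}, and the appendix of this paper): for $f,g$ supported in a cube $Q_{0}$, split $\sup_{t>0}|A_{t}f|\le\sup_{t>\ell(Q_{0})}|A_{t}f|+\sup_{t\le\ell(Q_{0})}|A_{t}(f\mathbf{1}_{3Q_{0}})|$; the first term is handled by summing the geometric series $\sum_{2^{j}>\ell(Q_{0})}2^{3j(1/q-1/p)}\lesssim1$ — the only place the off-diagonal condition above is needed — yielding a global $L^{p}\to L^{q}$ bound and hence a one-cube sparse estimate, while for the second term one selects the maximal dyadic subcubes $Q\subset Q_{0}$ on which $\langle f\rangle_{p,3Q}>\Lambda\langle f\rangle_{p,3Q_{0}}$ or $\langle g\rangle_{q',3Q}>\Lambda\langle g\rangle_{q',3Q_{0}}$ (so $\sum|Q|\le\tfrac12|Q_{0}|$ for $\Lambda$ large), applies the single-scale estimate at scale $\sim\ell(Q_{0})$ on the complement of $\bigcup Q$ to extract the summand $|Q_{0}|\langle f\rangle_{p,3Q_{0}}\langle g\rangle_{q',3Q_{0}}$, and iterates on each $Q$ (the supremum over $t$ inside a single block being treated exactly as in Theorems \ref{mainth1}--\ref{mainth2}, so requiring no new input). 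This produces a sparse family $\mathcal S$ with
\[
\Big|\int_{\mathbb{R}^{3}}\big(\sup_{t>0}|A_{t}f|\big)\,g\Big|\;\lesssim\;\sum_{Q\in\mathcal S}|Q|\,\langle f\rangle_{p,3Q}\,\langle g\rangle_{q',3Q}.
\]
Finally, the quantitative weighted estimate for such an $(p,q')$-sparse form is classical (see \cite{LWZ} and the references therein): for $p<r<q$ and $\omega\in A_{r/p}\cap RH_{(q/r)'}$ the right-hand side is $\lesssim\big([\omega]_{A_{r/p}}[\omega]_{RH_{(q/r)'}}\big)^{\alpha}\|f\|_{L^{r}(\omega)}\|g\|_{L^{r'}(\omega^{1-r'})}$ with $\alpha=\max\big(\tfrac1{r-p},\tfrac{q-1}{q-r}\big)$; taking the supremum over $\|g\|_{L^{r'}(\omega^{1-r'})}\le1$ gives the asserted bound for $\sup_{t>0}|A_{t}|$ on $L^{r}(\omega)$. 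Cases (1)--(4) are identical, differing only in which of $\widetilde{\Delta_{1}},\dots,\widetilde{\Delta_{4}}$ supplies the admissible $(\tfrac1p,\tfrac1q)$ via Theorem \ref{mainth1} or \ref{mainth2}.

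The step I expect to be the main obstacle is the sparse domination itself under the \emph{sole} hypothesis of a local $L^{p}\to L^{q}$ bound with $p<q$ (rather than an $L^{2}$-based estimate): one has to check that the single-scale inequality is available in the \emph{improving} form with the exact scaling gain $t^{3(1/q-1/p)}$ at \emph{every} scale $t>0$ — automatic here because $A_{t}$ is convolution with a dilated fixed measure — and that, once localized to a cube of side $\sim t$, this gain turns precisely into the $|Q|\,\langle f\rangle_{p}\,\langle g\rangle_{q'}$ summand, so that the recursion closes with the same pair of exponents $(p,q')$; one also needs the $t$-continuity required to handle the supremum, which however is already contained in Theorems \ref{mainth1} and \ref{mainth2}.
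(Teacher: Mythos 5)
There is a genuine gap, and it is exactly at the point you flag as ``the main obstacle'' and then dismiss. The sparse domination of the \emph{global maximal} operator $\sup_{t>0}|A_{t}|$ does not follow from the single-scale $L^{p}\to L^{q}$ improving bound alone: in the stopping-time recursion the supremum in $t$ couples the scales, and the standard way to close the argument (Lacey's scheme, which is the one packaged in \cite{LWZ} as Theorem 1.20 and Corollary 1.21) requires, in addition to the local $L^{p}\to L^{q}$ estimate of Theorems \ref{mainth1} and \ref{mainth2}, a quantitative \emph{continuity} estimate of the form (\ref{continuity:ineq}), i.e. $\bigl\|\sup_{t\in[1,2]}|A_{t}f(\cdot+z)-A_{t}f|\bigr\|_{L^{q}}\lesssim |z|^{\epsilon}\|f\|_{L^{p}}$, equivalently the $\delta^{\epsilon_{1}}$ gain in (\ref{goal}) for $\sup_{|h|\le\delta}|A_{t+h}f-A_{t}f|$. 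Your assertion that this ``$t$-continuity \ldots is already contained in Theorems \ref{mainth1} and \ref{mainth2}, so requiring no new input'' is not correct: those theorems give boundedness of the local maximal operator, which yields only an $O(1)$ bound for the difference, with no gain in $|z|$ or $\delta$. The gain is a separate analytic input; the paper devotes the whole of Section \ref{appendix}.2 to it, deriving it from the Fourier decay of the (finite-type) surface measure via Littlewood--Paley decomposition, Sobolev embedding, Plancherel, and interpolation (inequalities such as (\ref{holderl2h1})--(\ref{holderl2h3}) and (\ref{holderl2tau1})--(\ref{holderl2tau3})), and only then invokes \cite{LWZ} to obtain the sparse form and the weighted bound with the stated exponent $\alpha$.

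A second, related inaccuracy: when $c\neq 0$ the continuity estimate is not ``harmless''. The rescaled pieces $\widetilde{A_{t,k}}$ carry factors $2^{mk}$ coming from the shifted third coordinate $c2^{mk}$, and the appendix has to absorb these by proving the continuity estimate at a far-off-diagonal point (it is established at $(\frac{1}{p},\frac{1}{q})=(\frac{1}{8h_{\Phi}},\frac{1}{8h_{\Phi}})$ in cases (2) and (4)) before interpolating back; this bookkeeping does not appear in your sketch. The remaining components of your proposal --- the dilation identity $A_{t}=D_{1/t}A_{1}D_{t}$, the rescaled single-scale estimate $\|\sup_{t\sim\ell(Q)}|A_{t}(f\mathbf{1}_{Q})|\|_{L^{q}(3Q)}\lesssim |Q|^{1/q}\langle f\rangle_{p,Q}$, and the passage from a $(p,q')$-sparse form to the weighted bound with $\alpha=\max\bigl(\frac{1}{r-p},\frac{q-1}{q-r}\bigr)$ --- are consistent with the paper's route through \cite{LWZ}. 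To repair the proposal you must either prove the H\"older continuity property as the paper does, or supply some other mechanism (e.g. a quantitative Fourier-decay/Sobolev argument) that lets the recursion handle the supremum in $t$; without it the sparse bound for $\sup_{t>0}|A_{t}|$ is not established.
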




We note that the unweighted $L^{p}$-estimates for the global maximal operators associated with mixed homogeneous hypersurfaces in $\mathbb{R}^{3}$ were established in \cite{IKM1}. There, the hypersurface is represented as the graph of $\Phi + c$, where $c \in \mathbb{R}$ and $\Phi$ is a mixed homogeneous function. It was shown that the global maximal operator is $L^{p}$-bounded for $p > h_{\Phi} \geq 2$. These results are optimal when $c \neq 0$, but the sharp bounds for $h_{\Phi} < 2$ remain an open question. When $\omega = 1$ and $c \neq 0$ in Theorem \ref{mainth7}, our results coincide with those of \cite{IKM1} for homogeneous functions $\Phi$ with $h_{\Phi} \geq 2$, and also extend to certain cases where $h_{\Phi} < 2$.

 Moreover, when $\omega = 1$ and $c = 0$ in Theorem \ref{mainth7}, we obtain optimal $L^{p}$-estimates for the global maximal operators associated with homogeneous hypersurfaces in $\mathbb{R}^{3}$. The $L^{p} \to L^{q}$ boundedness ($p \leq q$) for maximal operators related to mixed homogeneous hypersurfaces will be addressed in our subsequent work.

\subsection{Organization of this paper}
In order to prove  Theorem \ref{mainth2+}, we are reduced to  considering the $L^{p} \rightarrow L^{q}$ boundedness of  a local maximal operator $\widetilde{\mathcal{M}}_{loc}$ related to
 \begin{equation}\label{tildeS}
 \widetilde{S}= \{(r(1+s^{M}g(s)),rs,r^{d}+c): r \in [1,2], s \in (0,\epsilon_{0})\},  M \ge 3, d \ge 2,
 \end{equation}
where $g$ is a smooth function with $g(0) \neq 0$. The boundedness of $\widetilde{M}_{loc}$ is stated in Theorem \ref{mainth3}. We will prove Theorem \ref{mainth3} through Section \ref{proofofmainth3}.

Section \ref{proofofmainth} is devoted to obtain the $L^{p}\rightarrow L^{q}$ bounded result of $\mathcal{M}_{loc,\lambda}$ in Theorem \ref{mainth1+} and Theorem \ref{mainth2+}. In the proof of Theorem  \ref{mainth1+}, we can reduce the problem to considering the local maximal operator related to hypersurfaces with non-vanishing Gaussian curvature, then we apply the method developed in \cite{WS}  to finish the proof of Theorem \ref{mainth1+}. The proof of   Theorem \ref{mainth2+}  is based on Theorem \ref{mainth3}.

Theorem \ref{apth1} and Theorem \ref{apth2} will be proved in Section \ref{nececondition}. The geometric properties of the hypersurface $S_{\lambda}$ play an essential role in the proofs. In particular, during the proof of Theorem \ref{apth2},
we obtain the relationship $M^{\lambda} \le m$.

In Section \ref{proofofproperties}, we give the details for the proofs of Proposition \ref{muphihphi} and Proposition \ref{level set type}.

In the appendix,  we will first
  explain how to prove the H\"{o}lder continuity property, which is a crucial step in obtaining weighted estimates for the global maximal operator. Secondly, for completeness, we give a complementary discussion of the proof of Theorem \ref{mainth1+}.

We will omit the proofs of Theorems \ref{mainth1} and \ref{mainth2}. Parts (1) and (2) of Theorem \ref{mainth1} follow directly from Theorem \ref{mainth1+} and Proposition \ref{muphihphi}. Parts (1) and (2) of Theorem \ref{mainth2} can be immediately obtained from Theorem \ref{mainth2+}, Proposition \ref{muphihphi}, and Theorem \ref{mainth1+}. Since the $L^{p} \rightarrow L^{q}$ boundedness of $\mathcal{M}_{loc}$ implies that of $\mathcal{M}_{loc,\lambda}$, part (3) of Theorems \ref{mainth1} and \ref{mainth2} follows from Theorems \ref{apth1} and \ref{apth2}, respectively.

\textbf{Conventions}: Throughout this article, we shall use the notation $A\ll B$, which means that  there is a sufficiently large constant $G$, which is much larger than $1$ and does not depend on the relevant parameters arising in the context in which
the quantities $A$ and $B$ appear, such that $G A\leq B$. Conversely, $A\gg B$ means $ A\ge GB$. By
$A\lesssim B$ we mean that $A \le CB $ for some constant $C$ independent of the parameters related to  $A$ and $B$.
Given $\mathbb{R}^{n}$, we write $B(0,1)$ instead of the unit ball $B^{n}(0,1)$ in $\mathbb{R}^{n}$ centered at the origin for short, and the same notation is valid for $B(x_{0},r)$.
For $i,j=1,2$, we denote  $\partial_{x_{i}}\partial_{x_{j}}\Phi(x_{1},x_{2})$ by $\partial_{ij}\Phi(x_{1},x_{2})$,
and $\partial_{x_{i}}\Phi(x_{1},x_{2})$  by $\partial_{i}\Phi(x_{1},x_{2})$. The symbol $H\Phi(x_{1},x_{2})$ denotes the determinant of the Hessian matrix of $\Phi$, i.e.,
\[H\Phi(x_{1},x_{2}) =\partial_{11}\Phi(x_{1},x_{2}) \partial_{22}\Phi(x_{1},x_{2})-[\partial_{12}\Phi(x_{1},x_{2})]^{2}.\]
For any positive integer $k > 2$, by $\partial^{(k)}_{i}\Phi(x_{1},x_{2})$ we mean the  $k$-th order partial derivative of $\Phi(x_{1},x_{2})$ with respect to $x_{i}$, where $i=1,2$, and  for a one-variable function $\Psi(s)$, by $\Psi^{(k)}(s)$ we mean the $k$-th order derivative of $\Psi(s)$.

\section{$L^{p}\rightarrow L^{q}$ estimate for $\widetilde{\mathcal M}_{loc}$}\label{proofofmainth3}

\begin{theorem}\label{mainth3}
Let
\begin{equation}\label{dmu}
\widehat{d\mu}(\xi)= \int_{\mathbb{R}^{2}}e^{-i\bigl(r(1+s^{M}g(s))\xi_{1}+rs\xi_{2}+r^{d}\xi_{3}\bigl)} \psi(s) \rho(r)dsdr,
\end{equation}
where $g$ is a smooth function with $g(0) \neq 0$, $\rho$ and $\psi$ are smooth cutoff functions, $\mathrm{supp}$ $ \rho \subset [1,2]$ and  $\mathrm{supp}$ $\psi \subset (0,\epsilon_{0})$ with $\epsilon_{0}$ sufficiently small, $d \ge 2$ and $M \ge 3$ are positive integers. Given the local maximal function
\begin{equation}\label{def4}
\widetilde{\mathcal{M}}_{loc}f(y):=\sup_{t\in[1,2]}| \widetilde{A_{t}}f(y)|, \nonumber
 \end{equation}
 where
 \begin{equation}
  \widetilde{A_{t}}f(y) =  \int_{\mathbb{R}^{3}} e^{i \bigl(y \cdot \xi  + tc\xi_{3}\bigl)} \widehat{d\mu}(t\xi) \hat{f}(\xi)d\xi \nonumber
 \end{equation}
with some constant $c \ge 0$, then we have the following estimate
\begin{equation}\label{levelsetestimate}
 \|\widetilde{\mathcal{M}}_{loc}f  \|_{L^{q}} \lesssim (1+c^{\frac{1}{q}}) \|f\|_{L^{p}}, \nonumber
\end{equation}
provided that $(\frac{1}{p}, \frac{1}{q})$ belongs to $\Delta_{M}$, which is defined by Definition \ref{DeltaM}.
\end{theorem}

$\bullet$ \textbf{Sketch of the proof:} In the proof of Theorem \ref{mainth3}, we perform a dyadic decomposition and localize the frequencies $\xi=(\xi_{1},\xi_{2},\xi_{3})$ where $|\xi| \sim 2^{j} \gg 1$. For fixed $j\gg 1$, we consider $\widehat{d\mu}(\xi)$ defined in (\ref{dmu}) when $\xi\in \{\xi:|\xi| \sim 2^{j}\}$. By some preliminary reductions, we reduce the problem to considering the case when $|\xi_{1}| \sim |\xi_{3}| \ge |\xi_{2}|$. Since $r\in [1,2]$, the phase function in $\widehat{d\mu}(t\xi)$ along the $r$-direction has non-degenerate critical point. Employing stationary phase method in the $r$-direction, we get the contribution term (\ref{r-mainpart}) for $\widehat{d\mu}(\xi)$. Furthermore,
in (\ref{r-mainpart}), to  control the integration in $|s| \ll 1$, we perform the dyadic decomposition into $s$-intervals of length $2^{-l}$, $l \ge l_{0} \gg 1$. For fixed $l$, we further decompose the variable $\xi_{2}$ according to the oscillation in $\widehat{d\mu_{l}}(\xi)$ defined by (\ref{dmul}) below. Finally, we are reduced to estimating the truncated maximal operator $\mathcal{M}_{j,l,k}$, see (\ref{Mjlk}). The $L^{p} \rightarrow L^{q}$ estimates of $\mathcal{M}_{j,l,k}$ relies on the kernel estimates in Theorem \ref{kernel estimate3}. We will first apply Theorem \ref{kernel estimate3} to finish the proof of Theorem \ref{mainth3} in Subsection \ref{mainproof}, then state the proof of Theorem \ref{kernel estimate3} in Subsection \ref{proofofmainth4}.

\subsection{Proof of Theorem \ref{mainth3}}\label{mainproof}
We choose a smooth cut-off function  $\beta$ supported in $[1/2,2]$ such that
\begin{equation}\label{LPDecoposition}
\sum_{j \in \mathbb{Z}}\beta(2^{-j}r)=1 \hspace{0.5cm}\textmd{for each} \hspace{0.5cm} r>0.
\end{equation}
Then we perform a dyadic decomposition to obtain that
\[\widetilde{A_{t}}f(y)= \sum_{j \ge 1} \widetilde{A_{t,j}}f(y)+ \widetilde{A_{t,0}}f(y), \quad y \in \mathbb{R}^{3},\]
 where the averaging operator
\begin{equation}\label{annulusj}
\widetilde{A_{t,j}}f(y)= \int_{\mathbb{R}^{3}} e^{i \bigl(y \cdot \xi  + tc\xi_{3} \bigl)} \widehat{d\mu}(t\xi) \beta(2^{-j}|\xi|) \hat{f}(\xi)d\xi,
\end{equation}
for $j \ge 1$,  and
\[\widetilde{A_{t,0}}f(y)= \int_{\mathbb{R}^{3}} e^{i \bigl(y \cdot \xi  + tc\xi_{3}\bigl)} \widehat{d\mu}(t\xi) \biggl(\sum_{j \le 0}\beta(2^{-j}|\xi|)\biggl) \hat{f}(\xi)d\xi .\]
 For each $j \ge 0$, we define  the local maximal function
\[\widetilde{\mathcal{M}}_{loc,j}f(y): = \sup_{t \in [1,2]} |\widetilde{A_{t,j}}f(y)|.\]
 It suffices to prove that for each $(\frac{1}{p}, \frac{1}{q}) \in \Delta_{M}$, $M\geq 3$, there exists $\epsilon(p,q)>0$ such that
\begin{equation}\label{goal1.4}
 \|\widetilde{\mathcal{M}}_{loc,j}  \|_{L^{p} \rightarrow L^{q}} \lesssim (1+c^{\frac{1}{q}}) 2^{-\epsilon(p,q)j}.
\end{equation}
We notice that when $j \lesssim 1$, (\ref{goal1.4}) can be proved for each $q \ge p \ge 1$ by integration by parts with respect to $\xi$ and Young's inequality. We will prove (\ref{goal1.4}) for $j \gg 1$ in what follows.

We choose a suitable smooth cut-off function $\tilde{\chi}$ with sufficiently small compact support, which vanishes near the origin and is identically one near $1$,  and decompose
\begin{equation}\label{mainterm}
\widetilde{A_{t,j}}f(y) =  A_{t,j} f(y) + R_{t,j}f(y),
\end{equation}
where
\[ A_{t,j} f(y)= \int_{\mathbb{R}^{3}} e^{i \bigl(y \cdot \xi  + tc\xi_{3} \bigl)} \widehat{d\mu}(t\xi) \beta(2^{-j}|\xi|) \tilde{\chi}\biggl(\frac{|\xi_{1}| + |\xi_{2}|}{|\xi_{3}|}\biggl) \hat{f}(\xi)d\xi,\]
and
\[ R_{t,j} f(y)= \int_{\mathbb{R}^{3}} e^{i \bigl(y \cdot \xi  + tc\xi_{3} \bigl)} \widehat{d\mu}(t\xi) \beta(2^{-j}|\xi|) \biggl[1-\tilde{\chi}\biggl(\frac{|\xi_{1}| + |\xi_{2}|}{|\xi_{3}|}\biggl)\biggl] \hat{f}(\xi)d\xi.\]
We define the local maximal function
\[\mathcal{M}_{loc,j}f(y) = \sup_{t\in[1,2]}|A_{t,j}f(y)|\]
and
\[\mathcal{M}^{0}_{loc,j}f(y) = \sup_{t\in[1,2]}|R_{t,j}f(y)|.\]
Then
\begin{equation}
\|\widetilde{\mathcal{M}}_{loc,j}   \|_{L^{p} \rightarrow L^{q}} \lesssim \| \mathcal{M}_{loc,j}   \|_{L^{p} \rightarrow L^{q}}  + \| \mathcal{M}^{0}_{loc,j}  \|_{L^{p} \rightarrow L^{q}} .
\end{equation}

We first consider   $\mathcal{M}_{loc,j}^{0}$. Let
\begin{equation}\label{phixi}
\phi_{\xi}(r,s) =  r(1+s^{M}g(s))\xi_{1} +  rs\xi_{2} +  r^{d}\xi_{3}, \hspace{0.2cm}r\in [1,2] \hspace{0.1cm}\textmd{and}\hspace{0.1cm}  s\in (0,\epsilon_0).
\end{equation}
Then
\[\nabla_{r,s} \phi_{\xi}(r,s) = ( s\xi_{2} +  (1+s^{M}g(s))\xi_{1} + d r^{d-1}\xi_{3}, r \xi_{2}+r (Ms^{M-1}g(s) +  s^{M}g^{\prime}(s))\xi_{1}).\]
If $|\xi_{3}| \gg |\xi_{1}| + |\xi_{2}|$, then $|\partial_{r}\phi_{\xi}(r,s)| \sim |\xi_{3}| \sim |\xi|$. If $|\xi_{3}| \ll |\xi_{1}| + |\xi_{2}|$ and $|\xi_{2}| \le |\xi_{1}|$, then $|\partial_{r}\phi_{\xi}(r,s)| \sim |\xi_{1}| \sim |\xi|$. If $|\xi_{3}| \ll |\xi_{1}| + |\xi_{2}|$ and $|\xi_{2}| > |\xi_{1}|$, then $|\partial_{s}\phi_{ \xi}(r,s)| \sim |\xi_{2}| \sim |\xi|$.
Hence, there holds
\begin{equation}\label{rapiddecay}
|\widehat{d\mu}(t\xi)| \le \frac{C_{N}}{(1+t|\xi|)^{N}}, \quad \forall N \in \mathbb{N}^{+}.
\end{equation}
  Then integrating by parts implies that
\[|R_{t,j}f(y)| \le C_{N}2^{-Nj} \int_{\mathbb{R}^{3}}\frac{|f(z)|}{(1+2^{j}|y-z-(0,0,ct)|)^{N}}dz, \quad \forall N \in \mathbb{N}^{+}. \]
Therefore, Sobolev  embedding and Young's inequality yield that
\begin{equation}\label{remainderterm}
 \|\mathcal{M}_{loc,j}^{0}f  \|_{L^{q}} \lesssim (1 + c^{\frac{1}{q}}) 2^{-jN} \|f\|_{L^{p}}, \quad \forall N \in \mathbb{N}^{+}
\end{equation}
whenever $q \ge p \ge 1$.  So we are reduced to investigating the main contribution part $\mathcal{M}_{loc,j}$, where we localize the frequencies to the following range
\begin{equation}\label{Frequency1}
2^j \sim|\xi_{3}| \sim |\xi_{1}| + |\xi_{2}|.
\end{equation}

Notice that in the phase function $\phi_{\xi}(r,s)$ denoted by (\ref{phixi}),  $s^{M}g(s)$ is a small perturbation of $s^{M}$. We first compute the critical points for the following function
\[\varphi_{\xi}(r,s) =  r(1+s^{M} )\xi_{1} +  rs\xi_{2} +  r^{d}\xi_{3}, \quad r \sim 1, |s| \ll 1. \]
We have
\[\partial_{r} \varphi_{\xi}(r,s) =  s\xi_{2} +  (1+s^{M})\xi_{1} + d r^{d-1}\xi_{3}, \]
\[\frac{1}{r} \cdot \partial_{s} \varphi_{\xi}(r,s)=  \xi_{2}+ Ms^{M-1}  \xi_{1}.\]
Then  $\nabla_{r,s} \varphi_{\xi}(r,s) = (0,0)$ provided that $\xi_{2} = -Ms^{M-1}\xi_{1}$, and  $\xi_{1}+dr^{d-1}\xi_{3} = -s\xi_{2} + s^{M}\xi_{1} = (1-M)s^{M}\xi_{1}  $.
Hence,
the phase function $\phi_{\xi}(r,s)$ can have  critical points only if
\[|\xi_{1}| \sim |\xi_{3}|, \quad |\xi_{2}| \sim s^{M-1} |\xi_{1}| . \]
Based on this observation, we consider (Case I): $|\xi_{1}| < |\xi_{2}| $ and (Case II): $|\xi_{1}| \ge |\xi_{2}| $, respectively.

\textbf{ Case I:} $|\xi_{1}| < |\xi_{2}| $. Combining (\ref{Frequency1}) with this inequality, the relation $|\xi_2| \sim |\xi_{3}| $ is obtained. Subsequently, integration by parts in $s$ leads to (\ref{rapiddecay}). Then the corresponding local maximal operator can be handled by the method to obtain the inequlity (\ref{remainderterm}).

\textbf{ Case II:} $|\xi_{1}| \ge |\xi_{2}| $. Combining this with (\ref{Frequency1}), we obtain
\begin{equation}\label{localfre1}
2^j \sim|\xi_{3}| \sim  |\xi_{1}|  \ge |\xi_{2}|.
\end{equation}
We concentrate  on  this  case in what follows.

Now we apply stationary phase method in the variable $r$. The equation
\[\partial_{r}\phi_{\xi}(r,s)=0\]
has a unique solution
\[r_{c}= \biggl(-\frac{[1+s^{M}g(s)]\xi_{1}  + s\xi_{2}}{d\xi_{3}} \biggl)^{\frac{1}{d-1}}. \]
We denote the phase function
\[\phi_{2}(s,\xi) = \phi_{\xi}(r,s)|_{r=r_{c}}=c_{d} \xi_{3}  \biggl(-\frac{[1+s^{M}g(s)]\xi_{1}  + s\xi_{2}}{d\xi_{3}} \biggl)^{\frac{d}{d-1}}. \]
 Then we have
\begin{equation}\label{dmiu}
\widehat{d\mu}(\xi)= 2^{-\frac{j}{2}} \int_{\mathbb{R}}e^{-i\phi_{2}(s,\xi) }a_j(s,\xi) \psi(s)ds + R_{j}(\xi).
\end{equation}

 We first consider the remainder term $R_{j}$ in (\ref{dmiu}). For each non-negative multi-index $\alpha$ and positive integer $N$,  $R_{j}$ satisfies the estimate
\[| \partial_{\xi}^{\alpha}R_j(t\xi)|\leq C 2^{-Nj}, \hspace{0.2cm}t\in [1,2].\]
Hence, the corresponding local maximal operator can be handled by the method to obtain the inequality (\ref{remainderterm}).

 Next we only consider the main contribution term of $\widehat{d\mu}(\xi)$ given by
\begin{equation}\label{r-mainpart}
2^{-\frac{j}{2}} \int_{\mathbb{R}}e^{-i\phi_{2}(s,\xi) }a_j(s,\xi) \psi(s)ds,
\end{equation}
  where the  symbol $a_j$ satisfies for arbitrary non-negative multi-index $\alpha$ and integer $\beta$,
\begin{equation}\label{symbol}
|\partial_s^{\beta}\partial_{\xi}^{\alpha}a_j(s,\xi)|\leq C 2^{-j|\alpha|}.
\end{equation}

In order to  control the integration over $|s| \ll 1$, we now perform a dyadic decomposition into $s$-intervals of length $2^{-l}$, $l \ge l_{0} \gg 1$,  scale in $s$ by $2^{-l}$, and scale in $\xi_{2}$ by $2^{-(M-1)l}$. Then the bounded estimate for the  corresponding maximal operator $\mathcal{M}_{loc,j}$ can be reduced to obtaining the $L^{p}\rightarrow L^{q}$ boundedness for the maximal operator $\mathcal{M}_{j,l}$, i.e.,
\begin{equation}\label{frequencydecompose1}
 \|\mathcal{M}_{loc,j}   \|_{L^{p}\rightarrow L^{q}} \le \sum_{l \ge l_0} 2^{-l}2^{(M-1)l(\frac{1}{q}-\frac{1}{p})}  \| \mathcal{M}_{j,l}
 \|_{L^{p}\rightarrow L^{q}}.
\end{equation}
Here, the local maximal operator $\mathcal{M}_{j,l}$ is defined by
\begin{equation}\label{Mjl}
\mathcal{M}_{j,l}f(y)= \sup_{t \in [1,2]} |A_{t,j,l}f(y)|,
\end{equation}
and
\[ A_{t,j,l} f(y)= \int_{\mathbb{R}^{3}} e^{i \bigl(y \cdot \xi  + tc\xi_{3}\bigl)} \widehat{d\mu_{l}}(t\xi) \chi_{1} \biggl(\frac{|\xi_{1}| }{2^{j}} \biggl)  \chi_{1} \biggl(  \frac{\xi_{1}}{ \xi_{3} }  \biggl) \chi_{0}(2^{-(M-1)l-j}|\xi_{2}|) \hat{f}(\xi)d\xi,\]
$\chi_{0}$ and $\chi_{1}$ are smooth cut-off functions, $\chi_{0}$ is supported in $[-1,1]$, whereas $\chi_{1}$ is supported in $[1,2]$,
\begin{equation}\label{dmul}
\widehat{d\mu_{l}}(\xi)= 2^{-\frac{j}{2}} \int_{\mathbb{R}}e^{-i\phi_{3}(s,\xi) } a_j(2^{-l}s,\xi_1,2^{-{(M-1)l}}\xi_2,\xi_3)\chi_{1}(s)ds,
\end{equation}
\[\phi_{3}(s,\xi) = \xi_{3} \biggl(- \frac{\xi_{1}}{d\xi_{3}}- \delta^{M} \cdot \frac{ s^{M}g(\delta s) \xi_{1}  + s\xi_{2}}{d\xi_{3}} \biggl)^{\frac{d}{d-1}}, \quad \delta  = 2^{-l}. \]
We notice that the localized frequencies defined in (\ref{localfre1}) has transformed into
\begin{equation}\label{region1}
2^j \sim|\xi_{3}| \sim  |\xi_{1}|  \ge2^{-(M-1)l}|\xi_{2}|.
\end{equation}
We may assume that $g(0)=1$, it is easy to get $\partial_{s}\phi_{3}(s,\xi)=0$ if  $s=s_{c}:= (-\frac{\xi_{2}}{M\xi_{1}})^{\frac{1}{M-1}} + \tilde{R}(\frac{\xi_{2}}{\xi_{1}},\delta) $. The remainder term $\tilde{R}(\frac{\xi_{2}}{\xi_{1}},\delta)$ is homogeneous of degree zero in $\xi$ and has at least $1$ power of $\delta$. Combining with $s \in [1,2]$, the phase function $\phi_{3}(s,\xi)$ can have critical points  only if $|\xi_{1}| \sim |\xi_{2}|$, and
\[\bigl|\partial^{2}_{s}\phi_{3}(s,\xi)|_{s=s_{c}} \bigl| \sim  2^{j-Ml}. \]
Hence, we will consider   $|\xi_{2}| \gg |\xi_{1}| $,   $|\xi_{2}| \sim |\xi_{1}| $ and   $|\xi_{2}| \ll |\xi_{1}| $, respectively. More concretely,   we take a  dyadic decomposition
\begin{equation}
\widehat{d\mu_{l}}(t\xi) = \sum_{k \in \mathbb{Z}} \beta(2^{-k-j}|\xi_{2}|)\widehat{d\mu_{l}}(t\xi),
\end{equation}
where $\beta$ satisfies (\ref{LPDecoposition}).
By (\ref{region1}), we only need to consider the case  $k \le (M-1)l$. We choose
\begin{equation}\label{defN1}
N_{1} = \sup_{s \in [1,2]}   \lfloor \log|Ms^{M-1}g(\delta s) +s^{M} \delta g^{\prime}(\delta s)|  \rfloor +10,
\end{equation}
$\lfloor \cdot \rfloor $ is the floor function, and
\begin{equation}\label{defN0}
N_{0} = \sup_{s \in [1,2]}   \lfloor \log|Ms^{M-1}g(\delta s) +s^{M} \delta g^{\prime}(\delta s) |^{-1} \rfloor +10.
\end{equation}
We will consider the contribution of each $\beta(2^{-k-j}|\xi_{2}|)\widehat{d\mu_{l}}(t\xi)$ in the following three cases. \textbf{Case (A)}:  $N_{1}<k \le (M-1)l$; \textbf{Case (B)}: $-N_{0}\le k \le N_{1}$.\textbf{ Case (C)}: $k<-N_{0}$.

\textbf{Case (A):} $N_{1}<k \le (M-1)l$. In this case, we have
\[|\xi_{1}| \sim |\xi_{3}|  \sim 2^{j},\quad |\xi_{2}| \sim 2^{j+k}. \]
 Notice that in the expression of $\phi_{3}(s,\xi)$, we have
\[\biggl|\delta^{M} \cdot \frac{ s^{M}g(\delta s) \xi_{1}  + s\xi_{2}}{d\xi_{3}}\biggl| \lesssim 2^{-l},\]
by Taylor expansion,
\begin{align}
\phi_{3}(s,\xi) &=  \xi_{3} \biggl(- \frac{\xi_{1}}{d\xi_{3}} \biggl)^{\frac{d}{d-1}}
  + \delta^{M} \xi_{3} \biggl(\bigl[-  \frac{  s^{M}g(\delta s) \xi_{1}  + s\xi_{2}}{d\xi_{3}} \bigl] \cdot \frac{d}{d-1} \bigl(- \frac{\xi_{1}}{d\xi_{3}} \bigl)^{\frac{1}{d-1}} +  \delta^{M} R_{\delta}\bigl(s, \frac{\xi_{1}}{\xi_{3}}, \frac{\xi_{2}}{\xi_{3}}\bigl) \biggl),\nonumber
\end{align}
where we have put all the remainder term into $ \delta^{M} R_{\delta}\bigl(s, \frac{\xi_{1}}{\xi_{3}}, \frac{\xi_{2}}{\xi_{3}}\bigl)$.  By the mean value theorem, we can obtain that
\begin{align}\label{remaindercond1}
&\delta^{M} \biggl|\partial_{s} R_{\delta}\bigl(s, \frac{\xi_{1}}{\xi_{3}}, \frac{\xi_{2}}{\xi_{3}}\bigl) \biggl| \nonumber\\
&\lesssim    \biggl|\frac{ [Ms^{M-1}g(\delta s)+ s^{M}\delta  g^{\prime}(\delta s)]\xi_{1}  +  \xi_{2}}{d\xi_{3}}\biggl| \cdot \biggl| \biggl(- \frac{\xi_{1}}{d\xi_{3}}- \delta^{M} \cdot \frac{ s^{M}g(\delta s) \xi_{1}  + s\xi_{2}}{d\xi_{3}} \biggl)^{\frac{1}{d-1}}- \biggl(- \frac{\xi_{1}}{d\xi_{3}} \biggl)^{\frac{1}{d-1}} \biggl| \nonumber\\
&\lesssim   \delta^{M}  \biggl|\frac{ [Ms^{M-1}g(\delta s)+ s^{M}\delta  g^{\prime}(\delta s)]\xi_{1}  +  \xi_{2}}{d\xi_{3}}\biggl| \cdot \biggl|\frac{ s^{M}g(\delta s) \xi_{1}  + s\xi_{2}}{d\xi_{3}}\biggl|.
\end{align}
We denote
\begin{equation}\label{phi4}
\phi_{4}(s,\xi)= \delta^{M} \xi_{3} \biggl(\bigl[-  \frac{  s^{M}g(\delta s) \xi_{1}  + s\xi_{2}}{d\xi_{3}} \bigl] \cdot \frac{d}{d-1} \bigl(- \frac{\xi_{1}}{d\xi_{3}} \bigl)^{\frac{1}{d-1}} +  \delta^{M} R_{\delta}\bigl(s, \frac{\xi_{1}}{\xi_{3}}, \frac{\xi_{2}}{\xi_{3}}\bigl) \biggl).
\end{equation}
Then we have
\begin{align}\label{mujlk}
 \beta (2^{-j-k}|\xi_{2}|) \widehat{d\mu_{l}}(t\xi)&= 2^{-\frac{j}{2}} e^{-it\xi_{3}\bigl(-\frac{\xi_{1}}{d\xi_{3}}\bigl)^{\frac{d}{d-1}}}\beta(2^{-j-k}|\xi_{2}|) \nonumber\\
 &\quad \times  \int_{\mathbb{R}}e^{-it \phi_{4}(s,\xi) } a_j(2^{-l}s,t\xi_1,t2^{-{(M-1)l}}\xi_2,t\xi_3) \chi_{1}(s)ds.
 \end{align}
In order to estimate
\[  \beta(2^{-j-k}|\xi_{2}|) \int_{\mathbb{R}}e^{-it \phi_{4}(s,\xi) } a_j(2^{-l}s,t\xi_1,t2^{-{(M-1)l}}\xi_2,t\xi_3) \chi_{1}(s)ds,\]
 we need to consider the phase function $\phi_{4}(s,\xi)$. Let $\xi_{1} =  2^{j}\xi^{\prime}_{1}$, $\xi_{2} = 2^{j+k}\xi^{\prime}_{2}$,  $\xi_{3} = 2^{j}\xi^{\prime}_{3}$,  $\xi^{\prime}_{j,k}=2^{j}(\xi_{1}^{\prime},2^{k}\xi_{2}^{\prime},\xi_{3}^{\prime})$.  We have
\begin{align}
\phi_{4}(s,\xi^{\prime}_{j,k}) &= 2^{j} \delta^{M} \xi^{\prime}_{3} \biggl(\bigl[-  \frac{  s^{M}g(\delta s) \xi^{\prime}_{1}  + 2^{k} s \xi^{\prime}_{2}}{d\xi^{\prime}_{3}} \bigl] \cdot \frac{d}{d-1} \bigl(- \frac{\xi^{\prime}_{1}}{d\xi^{\prime}_{3}} \bigl)^{\frac{1}{d-1}} +  \delta^{M} R_{\delta}\bigl(s, \frac{\xi^{\prime}_{1}}{\xi^{\prime}_{3}}, \frac{2^{k}\xi^{\prime}_{2}}{\xi^{\prime}_{3}}\bigl) \biggl) \nonumber\\
&= 2^{j+k}\delta^{M} \xi^{\prime}_{3} \biggl(\bigl[-  \frac{ 2^{-k}s^{M}g(\delta s) \xi^{\prime}_{1}  + s\xi^{\prime}_{2}}{d\xi^{\prime}_{3}} \bigl] \cdot \frac{d}{d-1} \bigl(- \frac{\xi^{\prime}_{1}}{d\xi^{\prime}_{3}} \bigl)^{\frac{1}{d-1}}   +  2^{-k}\delta^{M} R_{\delta}\bigl(s, \frac{\xi^{\prime}_{1}}{\xi_{3}^{\prime}}, \frac{2^{k}\xi^{\prime}_{2}}{\xi_{3}^{\prime}}\bigl) \biggl). \nonumber
\end{align}
Then
\begin{equation}
|\partial_{s} \phi_{4} (s,\xi^{\prime}_{j,k})| \sim 2^{j+k-Ml}  \biggl|     \frac{   \xi^{\prime}_{2}}{ \xi^{\prime}_{1}}   +   2^{-k} \bigl[  Ms^{M-1}g(\delta s) +s^{M}\delta g^{\prime}(\delta s)   \bigl]   + 2^{-k}\delta^{M}  \partial_{s}R_{\delta}\bigl(s, \frac{\xi^{\prime}_{1}}{\xi_{3}^{\prime}}, \frac{2^{k}\xi^{\prime}_{2}}{\xi_{3}^{\prime}}\bigl)  \biggl|. \nonumber
\end{equation}
Since $|\xi_{i}^{\prime}| \sim 1$ for $i=1,2,3$, by (\ref{remaindercond1}), there holds
\[ \biggl|2^{-k}\delta^{M}  \partial_{s}R_{\delta}\bigl(s, \frac{\xi^{\prime}_{1}}{\xi_{3}^{\prime}}, \frac{2^{k}\xi^{\prime}_{2}}{\xi_{3}^{\prime}}\bigl) \biggl| \lesssim 2^{k}\delta^{M}.   \]
Combining with $2^{k} \delta^{M} \le 2^{-l} \ll 1$,  $k >N_{1}$, and $|\frac{\xi^{\prime}_{2}}{\xi^{\prime}_{1}}| \sim 1$, it follows that
\begin{equation}\label{lowerbound1}
|\partial_{s} \phi_{4} (s,\xi^{\prime}_{j,k})| \sim 2^{j+k-Ml}.
\end{equation}

Depending on whether the oscillation factor can be derived from the estimate of
$\beta (2^{-j-k}|\xi_{2}|) \cdot$ $\widehat{d\mu_{l}}(t\xi)$, we divide the analysis into the following two subcases.

$\bullet$ \textbf{(A-i)} When $j >Ml$, $N_{1}<k \le (M-1)l$, or when $j \le Ml$, $Ml-j < k \le (M-1)l$,
we denote
\begin{equation}\label{ajlk}
 \beta(2^{-j-k}|\xi_{2}|) \int_{\mathbb{R}}e^{-it \phi_{4}(s,\xi) } a_j(2^{-l}s,t\xi_1,t2^{-{(M-1)l}}\xi_2,t\xi_3) \chi_{1}(s)ds:=a_{j,l,k} (\xi,t).
\end{equation}
According to (\ref{lowerbound1}),  integration by parts in $s$ implies that
\begin{equation}\label{symbolb1}
|D_{\xi^{\prime}}^{\alpha} a_{j,l,k} (\xi^{\prime}_{j,k},t)| \lesssim  2^{-(j+k-Ml)N}
\end{equation}
for each multi-index $\alpha$ and any positive integer $N$.  Then combining with (\ref{mujlk}), we have
\begin{align}
&\beta(2^{-j-k}|\xi_{2}|) \widehat{d\mu_{l}}(t\xi)   =   2^{-\frac{j}{2}} e^{-it\xi_{3}\bigl(-\frac{\xi_{1}}{d\xi_{3}}\bigl)^{\frac{d}{d-1}}}   a_{j,l,k}(\xi,t). \nonumber
\end{align}

$\bullet$ \textbf{(A-ii)} When $j \le Ml$ and $k \le Ml-j$,  then $2^{j+k-Ml} \le 1$. Therefore,
 \begin{align}
&\sum_{ N_{1}< k \le Ml-j}\beta(2^{-j-k}|\xi_{2}|) \widehat{d\mu_{l}}(t\xi) \nonumber\\ &= 2^{-\frac{j}{2}} e^{-it\xi_{3}\bigl(-\frac{\xi_{1}}{d\xi_{3}}\bigl)^{\frac{d}{d-1}}}
   \biggl(\sum_{N_{1} < k \le Ml-j} \beta(2^{-j-k}|\xi_{2}|) \int_{\mathbb{R}}e^{-it \phi_{4}(s,\xi) } a_j(2^{-l}s,t\xi_1,t2^{-{(M-1)l}}\xi_2,t\xi_3) \chi_{1}(s)ds\biggl), \nonumber
\end{align}
and the term
\[\sum_{N_{1} < k \le Ml-j} \beta(2^{-j-k}|\xi_{2}|) \int_{\mathbb{R}}e^{-it \phi_{4}(s,\xi) } a_j(2^{-l}s,t\xi_1,t2^{-{(M-1)l}}\xi_2,t\xi_3) \chi_{1}(s)ds\]
can be put into the amplitude.


\textbf{Case (B)}: $-N_{0}\le k \le N_{1}$. We have
\[|\xi_{1}| \sim |\xi_{3}|  \sim 2^{j}, \quad \quad 2^{-N_{0}+j} \le |\xi_{2}| \le 2^{N_{1}+j} .\]
  In this case, the equation $\partial_{s}\phi_{3}(s,\xi) =0$ has a unique solution $s_{c}= (-\frac{\xi_{2}}{M\xi_{1}})^{\frac{1}{M-1}} + \tilde{R}(\frac{\xi_{2}}{\xi_{1}},\delta) $.

$\bullet$ \textbf{(B-i)} When $j>Ml$, we rewrite
\begin{align*}
&\beta(2^{-j-k}|\xi_{2}|)\widehat{d\mu_{l}}(t\xi) \nonumber\\
&=   2^{-\frac{j}{2}}  e^{-it\xi_{3} \bigl(-\frac{\xi_{1}}{d\xi_{3}}\bigl)^{d/(d-1)} }\beta(2^{-j-k}|\xi_{2}|)
\int_{\mathbb{R}}e^{-it\phi_{4}(s,\xi) } a_j(2^{-l}s,\xi_1,2^{-{(M-1)l}}\xi_2,\xi_3)\chi_{1}(s)ds,
\end{align*}
we apply the analysis of stationary phase in $s$ to obtain
\[ \beta(2^{-j-k}|\xi_{2}|) \int_{\mathbb{R}}e^{-it\phi_{4}(s,\xi) } a_j(2^{-l}s,\xi_1,2^{-{(M-1)l}}\xi_2,\xi_3)\chi_{1}(s)ds=  e^{-it\phi_{4}(s_{c},\xi)}  \widetilde{a_{j,l,k}}(\xi,t) + \widetilde{r_{j,l,k}}(\xi,t),\]
where
\begin{equation}\label{symbolb2}
|D^{\alpha}_{\xi^{\prime}}\widetilde{a_{j,l,k}}(2^{j}\xi^{\prime},t)| \lesssim 2^{-\frac{j-Ml}{2}}, \quad \xi^{\prime}=2^{-j}\xi,
\end{equation}
\begin{equation}\label{symbolb2+}
|D^{\alpha}_{\xi^{\prime}}\widetilde{r_{j,l,k}}(2^{j}\xi^{\prime},t)| \lesssim 2^{-(j-Ml)N},  \quad  \forall N \in \mathbb{N}^{+},
\end{equation}
and
\begin{equation}\label{Rl}
\phi_{4}(s_{c},\xi)= \xi_{3} \biggl[c_{d,M} 2^{-Ml}\bigl(-\frac{\xi_{1}}{d\xi_{3}}\bigl)^{d/(d-1)} \bigl(-\frac{\xi_{2}}{M\xi_{1}}\bigl)^{M/(M-1)} + 2^{-(M+1)l}  R_{l}(\frac{\xi_{1}}{\xi_{3}}, \frac{\xi_{2}}{\xi_{1}}) \biggl].
\end{equation}
Here, in fact, we use Taylor expansion to show the expression of $\phi_{4}(s_{c},\xi)$.

To facilitate the subsequent discussion, we denote
\begin{equation}\label{ajl1}
  \sum_{-N_{0} \le k \le N_{1}} \biggl( e^{-it\phi_{4}(s_{c},\xi)} \widetilde{a_{j,l,k}}(\xi,t)+ \widetilde{r_{j,l,k}}(\xi,t) \biggl) = : a_{j,l,1}(\xi,t).
\end{equation}
Then
 \begin{align}
&\sum_{-N_{0} \le k \le N_{1}}\beta(2^{-j-k}|\xi_{2}|) \widehat{d\mu_{l}}(t\xi) =   2^{-\frac{j}{2}} e^{-it\xi_{3}\bigl(-\frac{\xi_{1}}{d\xi_{3}}\bigl)^{\frac{d}{d-1}}}   a_{j,l,1}(\xi,t).\nonumber
\end{align}

$\bullet$ \textbf{(B-ii)} When $j \le Ml$,
we consider (\ref{mujlk}) and $\phi_{4}(s,\xi)$ in (\ref{phi4}),
\begin{equation}\label{tildephi3}
 \phi_{4}(s,2^{j}\xi^{\prime})= 2^{j}\delta^{M} \xi^{\prime}_{3} \biggl(\bigl[-  \frac{  s^{M}g(\delta s) \xi^{\prime}_{1}  + s\xi^{\prime}_{2}}{d\xi^{\prime}_{3}} \bigl] \cdot \frac{d}{d-1} \bigl(- \frac{\xi^{\prime}_{1}}{d\xi^{\prime}_{3}} \bigl)^{\frac{1}{d-1}} + \delta^{M} R_{\delta}\bigl(s, \frac{\xi^{\prime}_{1}}{\xi_{3}^{\prime}}, \frac{\xi^{\prime}_{2}}{\xi_{3}^{\prime}}\bigl) \biggl),
\end{equation}
by $| \xi^{\prime}_{i} |   \sim 1$ for $i=1,2,3$, we have
\[ | \phi _{4}(s,2^{j}\xi^{\prime})| \lesssim 2^{j-Ml} \lesssim 1 .\]
Then
\begin{align}
&\sum_{-N_{0} \le k \le N_{1}}\beta(2^{-j-k}|\xi_{2}|) \widehat{d\mu_{l}}(t\xi) \nonumber\\
&= 2^{-\frac{j}{2}} e^{-it\xi_{3}\bigl(-\frac{\xi_{1}}{d\xi_{3}}\bigl)^{\frac{d}{d-1}}}
\biggl(\sum_{-N_{0} \le k \le N_{1}} \beta(2^{-j-k}|\xi_{2}|) \int_{\mathbb{R}}e^{-it \phi_{4}(s,\xi) } a_j(2^{-l}s,t\xi_1,t2^{-{(M-1)l}}\xi_2,t\xi_3) \chi_{1}(s)ds\biggl), \nonumber
\end{align}
and the terms
\[\sum_{-N_{0} \le k \le N_{1}} \beta(2^{-j-k}|\xi_{2}|) \int_{\mathbb{R}}e^{-it \phi_{4}(s,\xi) } a_j(2^{-l}s,t\xi_1,t2^{-{(M-1)l}}\xi_2,t\xi_3) \chi_{1}(s)ds\]
can be put into the amplitude.

\textbf{Case (C)}: $k < -N_{0}$. In this case,
\[|\xi_{1}| \sim |\xi_{3}|  \sim 2^{j}, \quad |\xi_{2}| <2^{j-N_{0}},\]
 $N_{0}$ is defined by (\ref{defN0}) above.
We consider (\ref{mujlk}) and $\phi_{4}(s,\xi)$ in (\ref{phi4}).
Combining  (\ref{remaindercond1}) with the fact that $2^{-l} \ll 1$, we conclude  that
\begin{align}\label{lowerbound2}
|\partial_{s}\phi _{4}(s,2^{j}\xi^{\prime})| \sim 2^{j -Ml} \biggl| [Ms^{M}g(\delta)s + \delta s^{M}g^{\prime}(\delta s)] + \frac{\xi^{\prime}_{2}}{\xi^{\prime}_{1}} + \delta^{M}  \partial_{s}R_{\delta}\bigl(s, \frac{\xi^{\prime}_{1}}{\xi_{3}^{\prime}}, \frac{ \xi^{\prime}_{2}}{\xi_{3}^{\prime}}\bigl)  \biggl| \sim 2^{j-Ml}.
\end{align}

$\bullet$ \textbf{(C-i)} When $j> Ml$, we denote
\begin{equation}\label{ajl0}
\sum_{k < -N_{0}}\beta(2^{-j-k}|\xi_{2}|)\int_{\mathbb{R}}e ^{-it\phi_{4}(s,\xi)} a_j(2^{-l}s,t\xi_1,t2^{-{(M-1)l}}\xi_2,t\xi_3)  \chi_{1}(s)ds = : a_{j,l,0}(\xi,t).
\end{equation}
By (\ref{lowerbound2}), integration by parts in $s$ implies that
\begin{equation}\label{symbolb3}
|D^{\alpha}_{\xi^{\prime}}a_{j,l,0}(2^{j}\xi^{\prime},t)| \le  2^{-(j-Ml)N}
\end{equation}
holds for every multi-index $\alpha$ and every positive integer $N$. Then combining with (\ref{mujlk}), we get
 \begin{align}
&\sum_{ k <-N_{0}}\beta(2^{-j-k}|\xi_{2}|) \widehat{d\mu_{l}}(t\xi) =   2^{-\frac{j}{2}} e^{-it\xi_{3}\bigl(-\frac{\xi_{1}}{d\xi_{3}}\bigl)^{\frac{d}{d-1}}}   a_{j,l,0}(\xi,t). \nonumber
\end{align}

$\bullet$ \textbf{(C-ii)} When $j \le Ml$,  then
\begin{align}
&\sum_{  k <-N_{0}}\beta(2^{-j-k}|\xi_{2}|) \widehat{d\mu_{l}}(t\xi)\nonumber\\
&= 2^{-\frac{j}{2}} e^{-it\xi_{3}\bigl(-\frac{\xi_{1}}{d\xi_{3}}\bigl)^{\frac{d}{d-1}}}
\biggl(\sum_{k <-N_{0}} \beta(2^{-j-k}|\xi_{2}|) \int_{\mathbb{R}}e^{-it \phi_{4}(s,\xi) } a_{j}(2^{-l}s,t\xi_1,t2^{-{(M-1)l}}\xi_2,t\xi_3) \chi_{1}(s)ds\biggl), \nonumber
\end{align}
the term
 \[\sum_{ k < -N_{0} } \beta(2^{-j-k}|\xi_{2}|) \int_{\mathbb{R}}e^{-it \phi_{4}(s,\xi) } a_{j}(2^{-l}s,t\xi_1,t2^{-{(M-1)l}}\xi_2,t\xi_3) \chi_{1}(s)ds.\]
can be put into the amplitude.

Based on the above Case (A), Case (B), and Case (C), we present the asymptotic expansion of $\widehat{d\mu_{l}}(t\xi)$ below.

When $j>Ml$, by the arguments in (A-i), (B-i) and (C-i), we may decompose
\begin{equation}\label{dmudec1}
\widehat{d\mu_{l}}(t\xi) = 2^{-\frac{j}{2}} e^{-it\xi_{3}\bigl(-\frac{\xi_{1}}{d\xi_{3}}\bigl)^{\frac{d}{d-1}}} \bigl[a_{j,l,0}(\xi,t)+  a_{j,l,1}(\xi,t) + \sum_{k \ge N_{1}}  a_{j,l,k}(\xi,t)\bigl],
\end{equation}
where $a_{j,l,0}$ was defined by (\ref{ajl0}), $a_{j,l,1}$ was defined by (\ref{ajl1}), and $a_{j,l,k}$ was defined by (\ref{ajlk}) for $N_{1} < k \le (M-1)l $.

When $j \le Ml$, we divide $\widehat{d\mu_{l}}(t\xi)$ as
\[\widehat{d\mu_{l}}(t\xi)=\sum_{ Ml-j<k \le (M-1)l}\beta(2^{-j-k}|\xi_{2}|) \widehat{d\mu_{l}}(t\xi)+ \sum_{ k \le Ml-j}\beta(2^{-j-k}|\xi_{2}|) \widehat{d\mu_{l}}(t\xi).\]
According to the discussion in (A-ii), (B-ii) and (C-ii),
 \begin{align}
 &\sum_{ k \le Ml-j}\beta(2^{-j-k}|\xi_{2}|) \widehat{d\mu_{l}}(t\xi)\nonumber\\
 & = 2^{-\frac{j}{2}} e^{-it\xi_{3}\bigl(-\frac{\xi_{1}}{d\xi_{3}}\bigl)^{\frac{d}{d-1}}}
\biggl(\sum_{   k \le Ml-j} \beta(2^{-j-k}|\xi_{2}|) \int_{\mathbb{R}}e^{it \phi_{4}(s,\xi) } a_j(2^{-l}s,t\xi_1,t2^{-{(M-1)l}}\xi_2,t\xi_3) \chi_{1}(s)ds\biggl). \nonumber
\end{align}
We denote
\begin{equation}\label{ajlMl-j}
a_{j,l,Ml-j}(\xi,t):  =\sum_{  k \le Ml-j} \beta(2^{-j-k}|\xi_{2}|) \int_{\mathbb{R}}e^{it \phi_{4}(s,\xi) } a_j(2^{-l}s,t\xi_1,t2^{-{(M-1)l}}\xi_2,t\xi_3) \chi_{1}(s)ds,
\end{equation}
then we have
\begin{equation}\label{symbolb4}
|D^{\alpha}_{\xi^{\prime}}a_{j,l,Ml-j}( \xi_{j,Ml-j}^{\prime},t)| \lesssim 1
\end{equation}
with $\xi^{\prime}_{j,Ml-j}=2^{j}(\xi^{\prime}_{1},2^{Ml-j}\xi^{\prime}_{2},\xi^{\prime}_{3})$, $|\xi^{\prime}_{i}| \sim 1$ for $i=1,2,3$. Then combining with the subcase  $j\leq Ml$ and $k>Ml-j$ in (A-i), we obtain
\begin{equation}\label{dmudec2}
\widehat{d\mu_{l}}(t\xi)=\sum_{Ml-j \le k \le (M-1)l} 2^{-\frac{j}{2}} e^{-it\xi_{3}\bigl(-\frac{\xi_{1}}{d\xi_{3}}\bigl)^{\frac{d}{d-1}}}  a_{j,l,k}(\xi,t),
\end{equation}
where $a_{j,l,k}$ was defined by (\ref{ajlk}) for $Ml-j < k \le (M-1)l $.

Now we are ready to decompose $\mathcal{M}_{j,l}$ defined by (\ref{Mjl}) according to (\ref{dmudec1}) and (\ref{dmudec2}). For fixed $j,l,k$, we define the truncated maximal operator $\mathcal{M}_{j,l,k}$ by
\begin{align}\label{Mjlk}
\sup_{t\in[1,2]}\biggl|\int_{\mathbb{R}^{3}}e^{i \bigl[y \cdot \xi+ct\xi_{3} - t\xi_{3}\bigl(-\frac{\xi_{1}}{d\xi_{3}}\bigl)^{\frac{d}{d-1}} \bigl]} a_{j,l,k }(\xi,t)\chi_{1} \biggl(\frac{|\xi_{1}| }{2^{j}} \biggl)  \chi_{1} \biggl(\frac{\xi_{1}}{ \xi_{3} }\biggl)  \chi_{0}(2^{-j-(M-1)l }|\xi_{2}|) \hat{f}(\xi) d\xi\biggl|.
\end{align}
By  (\ref{dmudec1}),
for $j>Ml$, the $L^{p} \rightarrow L^{q}$ norm of $\mathcal{M}_{j,l}$ is dominated  by
\begin{align}\label{jlkdecompose2}
\|\mathcal{M}_{j,l}\|_{L^{p} \rightarrow L^{q}} &\le \|\mathcal{M}_{j,l,0}\|_{L^{p} \rightarrow L^{q}}+ \|\mathcal{M}_{j,l,1}\|_{L^{p} \rightarrow L^{q}} + \sum_{N_{1} < k \le (M-1)l} \|\mathcal{M}_{j,l,k}\|_{L^{p}\rightarrow L^{q}}.
\end{align}
For $j \le Ml$, by (\ref{dmudec2}),
the $L^{p} \rightarrow L^{q}$ norm of $\mathcal{M}_{j,l}$ is dominated by
\begin{align}\label{jlkdecompose1}
\|\mathcal{M}_{j,l}\|_{L^{p} \rightarrow L^{q}}
\le \sum_{Ml-j \le k \le (M-1)l} \|\mathcal{M}_{j,l,k}\|_{L^{p} \rightarrow L^{q}}.
\end{align}
So we first do some reduction to estimate   $\mathcal{M}_{j,l,k}$. For each $1 \le q \le \infty$,   by Sobolev  embedding, we have
\begin{align}\label{sobolevemb}
 \|  \mathcal{M}_{j,l,k }   \|_{L^{p} \rightarrow L^{q}}  \lesssim (1+c^{\frac{1}{q}})2^{\frac{j}{q}-\frac{j}{2}}  \|F_{j,l,k }\|_{  L^{p}(\mathbb{R}^3) \rightarrow L^{q} (\mathbb{R}^3\times [1/2,4]) },
\end{align}
where
\begin{equation}\label{IFO}
F_{j,l, k}f(y,t) = \rho(t) K_{j,l,k}(\cdot, t) * f(y),
\end{equation}
and $\rho(t)$ is a smooth cut-off function supported in $[1/2,4]$ and $\rho(t)=1$ on $[1,2]$,
\begin{equation}\label{kerneljlk}
K_{j,l,k }(y,t) := \int_{\mathbb{R}^{3}}e^{i \bigl[y \cdot \xi - t\xi_{3}\bigl(-\frac{\xi_{1}}{d\xi_{3}}\bigl)^{\frac{d}{d-1}} \bigl]} a_{j,l,k }(\xi,t)\chi_{1} \biggl(\frac{|\xi_{1}| }{2^{j}} \biggl)  \chi_{1} \biggl(\frac{\xi_{1}}{ \xi_{3} }\biggl)  \chi_{0}(2^{-j-(M-1)l }|\xi_{2}|)  d\xi.
\end{equation}



The following theorem is helpful  to handle $F_{j,l,k}$, and its proof will be left to Subsection \ref{proofofmainth4}.

\begin{theorem}\label{kernel estimate3}
 Let $K_{j,l,k}$ be defined by (\ref{kerneljlk}). Define $\widetilde{K_{j,l,k }}(y,t,t^{\prime})$ by
\begin{align}\label{tildekerneljlk}
\int_{\mathbb{R}^{3}}e^{i \bigl[y \cdot \xi +(t^{\prime}- t)\xi_{3}\bigl(-\frac{\xi_{1}}{d\xi_{3}}\bigl)^{\frac{d}{d-1}} \bigl]} a_{j,l,k }(\xi,t) \overline{a_{j,l,k }(}\xi,t^{\prime}) \chi^{2}_{1} \biggl(\frac{|\xi_{1}| }{2^{j}} \biggl)  \chi^{2}_{1} \biggl(\frac{\xi_{1}}{ \xi_{3} }\biggl)  \chi^{2}_{0}(2^{-j-(M-1)l }|\xi_{2}|)  d\xi.
\end{align}
Then we have the following estimates. \\
(I) When $j \le Ml$, $k\in \mathbb{Z}\bigcap [Ml-j,(M-1)l]$, or when  $j > Ml$, $k\in \biggl(\mathbb{Z}\bigcap (N_{1},(M-1)l]\biggl)\bigcup\{0\}$,
there hold
\begin{equation}\label{kerneli1}
\sup_{t \in [1/2,4]}\|K_{j, l,k}(\cdot,t)\|_{L^{1}} \lesssim 2^{\frac{j}{2}} 2^{-(j+k-Ml)N};
\end{equation}
\begin{equation}\label{kerneli2}
\sup_{t \in [1/2,4]} \sup_{y \in \mathbb{R}^{3}}|K_{j,l,k}(y,t)| \lesssim 2^{\frac{5j}{2}+k}2^{-(j+k-Ml)N};
\end{equation}
\begin{equation}\label{kerneli3}
\sup_{y \in \mathbb{R}^{3}}\bigl|\widetilde{K_{j,l,k}}(y,t,t^{\prime})\bigl| \lesssim 2^{3j+k}2^{-2(j+k-Ml)N} (1+2^{j}|t-t^{\prime}|)^{-1/2}, \quad t,t^{\prime} \in [1/2,4].
\end{equation}
(II) When $j>Ml$ and $k=1$,
\begin{equation}\label{kernelii1}
\|K_{j, l,1}(\cdot,t)\|_{L^{1}} \lesssim 2^{\frac{j}{2}};
\end{equation}
\begin{equation}\label{kernelii2}
\sup_{t \in [1/2,4]} \sup_{y \in \mathbb{R}^{3}}|K_{j,l,1}(y,t)| \lesssim 2^{\frac{3}{2}j + Ml};
\end{equation}
\begin{equation}\label{kernelii3}
\sup_{y \in \mathbb{R}^{3}} \bigl|\widetilde{K_{j,l,1}}(y,t,t^{\prime})\bigl|  \lesssim 2^{2j+Ml} (1+2^{j}|t-t^{\prime}|)^{-1/2}(1+2^{j-Ml}|t-t^{\prime}|)^{-1/2},  \quad t,t^{\prime} \in [1/2,4].
\end{equation}
\end{theorem}

We now briefly explain how we obtain Theorem \ref{mainth3}. For fixed $j,l,k$, we will obtain the $L^{p}  \rightarrow L^{q} $ estimate for $\mathcal{M}_{j,l,k}$ at $(\frac{1}{2},\frac{1}{2})$, $(0,0)$, $(1,1)$, $ (1,0)$, $ (\frac{1}{2},\frac{1}{6})$, $ (\frac{1}{2},\frac{1}{4})$, respectively. Then we will sum over $k$ and get the corresponding $L^{p}  \rightarrow L^{q} $ estimate for $\mathcal{M}_{j,l}$. Interpolating  between these estimates and summing  over $l$ will yield the $L^{p} \rightarrow L^{q} $ estimate for $\mathcal{M}_{loc,j}$ at $P_{1},P_{2},P_{3},P_{4}$, and $P_{5}$ (when $M=5$). Finally, by interpolating the estimates for  $\mathcal{M}_{loc,j}$  at $P_{1},P_{2},P_{3},P_{4}$,$P_{5}$ and the $L^{2}  \rightarrow L^{2} $ estimate for $\mathcal{M}_{loc,j}$, we will  prove that for each $(\frac{1}{p}, \frac{1}{q}) \in \Delta_{M}$, $M\geq 3$, there exists $\epsilon(p,q)>0$ such that (\ref{goal1.4}) remains valid.

The idea we employed to prove $L^{2} \rightarrow L^{4}$ and $L^{2} \rightarrow L^{6}$ estimates for $\mathcal{M}_{j,l,k}$ was inspired by local smoothing estimates appeared in Proposition 3.4 in \cite{MSS}. However, in this context, we cannot directly apply the conclusion therein, so we will provide a complete proof.

Next, we  elaborate individually on the proof for establishing $L^{p} \rightarrow L^{q} $  estimates of $\mathcal{M}_{loc,j}$ at $(\frac{1}{2},\frac{1}{2})$, $P_{1}$, $P_{2}$, $P_{3}$, and $P_{4}$.

\textbf{$\bullet$ $L^{2} \rightarrow L^{2}$ estimate for $\mathcal{M}_{loc,j}$. }
The Plancherel's theorem implies that
\begin{equation}
\|F_{j,l,k}\|_{ L^{2}(\mathbb{R}^{3}) \rightarrow L^{2}(\mathbb{R}^{3} \times [1/2,4])  } \lesssim \|a_{j,l,k}\|_{L^{\infty}_{\xi,t}(\mathbb{R}^{3} \times [1/2,4])}.\nonumber
\end{equation}
When $j \le Ml$, $k\in \mathbb{Z}\bigcap [Ml-j,(M-1)l]$, or when  $j > Ml$, $k\in \biggl(\mathbb{Z}\bigcap (N_{1},(M-1)l]\biggl)\bigcup\{0\}$, according to (\ref{sobolevemb}),  (\ref{symbolb4}),  (\ref{symbolb1}) and  (\ref{symbolb3}), we have
\begin{equation}
\|\mathcal{M}_{j,l,k}\|_{L^{2} \rightarrow L^{2}} \lesssim (1+c^{\frac{1}{2}}) 2^{-(j+k-Ml)N}. \nonumber
\end{equation}
When $j>Ml$, by (\ref{sobolevemb}), (\ref{symbolb2}) and (\ref{symbolb2+}), we get
\begin{equation}
\|\mathcal{M}_{j,l,1}\|_{L^{2} \rightarrow L^{2}} \lesssim (1+c^{\frac{1}{2}}) 2^{- \frac{j-Ml}{2}}. \nonumber
\end{equation}
We sum over $k$ according to (\ref{jlkdecompose1}) and (\ref{jlkdecompose2}) respectively to obtain the following results,
\begin{align}\label{L2L21}
\|\mathcal{M}_{j,l}\|_{L^{2} \rightarrow L^{2}} \lesssim (1+c^{\frac{1}{2}}) \sum_{Ml-j \le k \le (M-1)l}2^{-(j+k-Ml)N} \lesssim (1+c^{\frac{1}{2}}), \quad j \le Ml,
\end{align}
and
\begin{align}\label{L2L22}
\|\mathcal{M}_{j,l}\|_{L^{2}\rightarrow L^{2}} &\lesssim (1+c^{\frac{1}{2}}) \biggl( 2^{-(j-Ml)N} + 2^{-\frac{j-Ml}{2}} + \sum_{N_{1} < k \le (M-1)l, k\neq 1}2^{-(j+k-Ml)N} \biggl) \nonumber\\
&\lesssim (1+c^{\frac{1}{2}}) 2^{-\frac{j-Ml}{2}} , \quad j > Ml.
\end{align}
By (\ref{frequencydecompose1}), we  then get
\begin{align}\label{maximalL2}
 \|\mathcal{M}_{loc,j}   \|_{L^{2} \rightarrow L^{2}} \lesssim  \sum_{l_{0}\le l < \frac{j}{M}}(1+c^{\frac{1}{2}}) 2^{-\frac{j-Ml}{2}} 2^{-l}  + \sum_{ l\geq \frac{j}{M}}  (1+c^{\frac{1}{2}}) 2^{-l} \lesssim (1+c^{\frac{1}{2}}) 2^{-\frac{j}{M}}  .
\end{align}

\textbf{$\bullet$ Estimation for $\mathcal{M}_{loc,j}$ at $P_{1}$. } By Young's inequality, we have
\begin{equation}
\|F_{j,l,k}\|_{ L^{\infty}(\mathbb{R}^{3}) \rightarrow L^{\infty}(\mathbb{R}^{3} \times [1/2,4])  } \lesssim \sup_{t \in [1/2,4]} \|K_{j,l,k}(\cdot, t)\|_{L^{1}_{y}(\mathbb{R}^{3})}. \nonumber
\end{equation}
Combining with (\ref{kerneli1}) and (\ref{kernelii1}) in Theorem \ref{kernel estimate3}, we get
 \begin{equation}
\|\mathcal{M}_{j,l,k}\|_{L^{\infty}\rightarrow L^{\infty}} \lesssim 2^{-(j+k-Ml)N} \nonumber
\end{equation}
for $j \le Ml$, $k\in \mathbb{Z}\bigcap [Ml-j,(M-1)l]$,  along with $j>Ml$, $k\in \biggl(\mathbb{Z}\bigcap (N_{1},(M-1)l]\biggl)\bigcup\{0\}$, and
 \begin{equation}
\|\mathcal{M}_{j,l,1}\|_{L^{\infty}\rightarrow L^{\infty}} \lesssim  1, \quad j >Ml. \nonumber
\end{equation}
 Summing over $k$ by (\ref{jlkdecompose1}) and (\ref{jlkdecompose2})  respectively, we obtain
\begin{align}
\|\mathcal{M}_{j,l}\|_{L^{\infty} \rightarrow L^{\infty}} \lesssim  \sum_{Ml-j \le k \le (M-1)l}2^{-(j+k-Ml)N} \lesssim 1, \quad j \le Ml, \nonumber
\end{align}
and
\begin{align}
\|\mathcal{M}_{j,l}\|_{L^{\infty} \rightarrow L^{\infty}} &\lesssim 2^{-(j-Ml)N}   + 1 +\sum_{N_{1} < k \le (M-1)l}2^{-(j+k-Ml)N}    \lesssim 1 , \quad j > Ml. \nonumber
\end{align}
Hence, for each $j,l$, there holds
\begin{equation}\label{Linftyinftyjl}
\|\mathcal{M}_{j,l}\|_{L^{\infty}\rightarrow L^{\infty}}\lesssim 1,
\end{equation}
then by (\ref{frequencydecompose1}), we get the uniform estimate for $\mathcal{M}_{loc,j}$,
\begin{align}\label{maximalLinfty}
\|\mathcal{M}_{loc,j}   \|_{L^{\infty} \rightarrow L^{\infty}} &\lesssim  \sum_{l \ge l_{0} } 2^{-l} \lesssim 1.
\end{align}

\textbf{$\bullet$ Estimation for $\mathcal{M}_{loc,j}$ at $P_{2}$. } By Young's inequality,
there holds
\begin{equation}
\|F_{j,l,k}\|_{ L^{1}(\mathbb{R}^{3}) \rightarrow L^{1}(\mathbb{R}^{3} \times [1/2,4])  } \lesssim  \|K_{j,l,k}\|_{L^{1}_{y,t}(\mathbb{R}^{3} \times [1/2,4])}.
\end{equation}
By (\ref{kerneli1}) and (\ref{kernelii1}) in Theorem \ref{kernel estimate3}, when $j \le Ml$, $k\in \mathbb{Z}\bigcap [Ml-j,(M-1)l]$, or when  $j > Ml$, $k\in \biggl(\mathbb{Z}\bigcap (N_{1},(M-1)l]\biggl)\bigcup\{0\}$, we can obtain
 \begin{equation}
\|\mathcal{M}_{j,l,k}\|_{L^{1} \rightarrow L^{1}} \lesssim (1+c) 2^{j} \cdot 2^{-(j+k-Ml)N}, \nonumber
\end{equation}
and when $j>Ml$, we get
 \begin{equation}
\|\mathcal{M}_{j,l,1}\|_{L^{1} \rightarrow L^{1}} \lesssim  (1+c) 2^{j}. \nonumber
\end{equation}
Summing over $k$, we obtain
\begin{align}
\|\mathcal{M}_{j,l}\|_{L^{1}\rightarrow L^{1}} \lesssim  \sum_{Ml-j \le k \le (M-1)l}(1+c) 2^{j} 2^{-(j+k-Ml)N} \lesssim (1+c) 2^{j} , \quad j \le Ml, \nonumber
\end{align}
and
\begin{align}
\|\mathcal{M}_{j,l}\|_{L^{1} \rightarrow L^{1}} &\lesssim  (1+c) 2^{j}  \biggl(2^{-(j-Ml)N} + 1 +\sum_{N_{1} < k \le (M-1)l}2^{-(j+k-Ml)N} \biggl)  \nonumber\\
&\lesssim (1+c) 2^{j}  , \quad j > Ml. \nonumber
\end{align}
Hence, for each $j$ and $l$, we have
\begin{equation}\label{L1L1}
\|\mathcal{M}_{j,l}\|_{L^{1} \rightarrow L^{1}}  \lesssim (1+c) 2^{j}.
\end{equation}
To obtain the estimate for   $\mathcal{M}_{loc,j}$ at $P_{2}$, we interpolate between (\ref{L2L21}) and (\ref{L1L1}) when $j \le Ml$, and   between (\ref{L2L22}) and (\ref{L1L1}) when $j > Ml$, then apply (\ref{frequencydecompose1}) to obtain that
\begin{align}\label{MlocjP2}
& \|\mathcal{M}_{loc,j}  \|_{L^{\frac{2M}{M+1}}\rightarrow L^{\frac{2M}{M+1}}} \nonumber\\
 &\le (1+c^{\frac{M+1}{2M}}) \biggl( \sum_{ l \ge \frac{j}{M}} 2^{-l}  2^{\frac{j}{M}} +  \sum_{l_{0} \le l < \frac{j}{M}} 2^{-l} 2^{\frac{M-1}{M} \cdot (-\frac{j}{2}+\frac{Ml}{2})} 2^{\frac{j}{M}} \biggl)  \nonumber\\
 & \lesssim (1+c^{\frac{M+1}{2M}}).
\end{align}

\textbf{$\bullet$ Estimation for $\mathcal{M}_{loc,j}$ at $P_{3}$. }Recall that $P_{3}= (\frac{4}{M+2},\frac{2}{M+2})$ when $M \ge 6$, and $P_{3}= (\frac{2M+4}{5M+2},\frac{M+2}{5M+2})$ when $3 \le M \le 5$. We first outline the proof for the two cases separately. For $M \ge 6$, where $0<\frac{2}{M+2} \le \frac{1}{4}$, we will build the $L^{2} \rightarrow L^{4}$ estimate of $\mathcal{M}_{j,l}$,   then interpolate between this estimate and the $L^{\infty} \rightarrow L^{\infty}$ estimate of $\mathcal{M}_{j,l}$. Applying  (\ref{frequencydecompose1}), we obtain the  estimate of $\mathcal{M}_{loc,j}$ at $P_{3}$. When $3 \le M \le 5$, where $\frac{1}{4} <\frac{M+2}{5M+2} < \frac{1}{3}$, we will  establish the $L^{\frac{3}{2}}  \rightarrow L^{3} $ estimate of $\mathcal{M}_{j,l}$,   then interpolate between  this estimate   and the $L^{2}  \rightarrow L^{4} $ estimate of $\mathcal{M}_{j,l}$. By applying  (\ref{frequencydecompose1}), we can obtain the  estimate of $\mathcal{M}_{loc,j}$ at $P_{3}$. Next, we give more details of  the proof.

$\blacklozenge$ \textbf{Case $M \ge 6$}. For $2<q<\infty$, in order to get $L^{2}(\mathbb{R}^{3}) \rightarrow L^{q}(\mathbb{R}^{3} \times [1/2,4])$ estimate for the operator defined by (\ref{IFO}), by Plancherel's theorem, it suffices to consider $L^{2}(\mathbb{R}^{3}) \rightarrow L^{q}(\mathbb{R}^{3} \times [1/2,4])$ estimate for the operator
\begin{equation*}
T_{j,l,k }f(y,t):=\rho(t) \int_{\mathbb{R}^{3}}e^{i \bigl[y \cdot \xi - t\xi_{3}\bigl(-\frac{\xi_{1}}{d\xi_{3}}\bigl)^{\frac{d}{d-1}} \bigl]} a_{j,l,k }(\xi,t)\chi_{1} \biggl(\frac{|\xi_{1}| }{2^{j}} \biggl)  \chi_{1} \biggl(\frac{\xi_{1}}{ \xi_{3} }\biggl)  \chi_{0}(2^{-j-(M-1)l }|\xi_{2}|) f(\xi) d\xi.
\end{equation*}
 By duality,
\[\|T_{j,l,k}\|_{L^{2}(\mathbb{R}^{3}) \rightarrow L^{q}(\mathbb{R}^{3} \times [1/2,4]) } =\| T^{*}_{j,l,k}\|_{L^{q^{\prime}}(\mathbb{R}^{3} \times [1/2,4]) \rightarrow L^{2}(\mathbb{R}^{3})}, \quad 1/q + 1/q^{\prime} = 1, \]
where $T^{*}_{j,l,k}$ is defined by
\[T^{*}_{j,l,k}G(\xi):= \int_{\mathbb{R}^{4}}e^{i \bigl[-y \cdot \xi+ t\xi_{3}\bigl(-\frac{\xi_{1}}{d\xi_{3}}\bigl)^{\frac{d}{d-1}} \bigl]}\overline{ a_{j,l,k }(\xi,t)}\chi_{1} \biggl(\frac{|\xi_{1}| }{2^{j}} \biggl)  \chi_{1} \biggl(\frac{\xi_{1}}{ \xi_{3} }\biggl)  \chi_{0}(2^{-j-(M-1)l }|\xi_{2}|) \rho(t) G(y,t) dydt,\]
and $G \in L^{q^{\prime}}(\mathbb{R}^{3} \times [1/2,4])$. Moreover, since
\begin{align}
\| T^{*}_{j,l,k}G\|^{2}_{L^{2}(\mathbb{R}^{3})} \le \|G\|_{L^{q^{\prime}}(\mathbb{R}^{3} \times [1/2,4])}  \|T_{j,l,k}T^{*}_{j,l,k}G\|_{L^{q }(\mathbb{R}^{3} \times [1/2,4])},
\end{align}
it follows that
 \begin{equation}\label{reducetodual}
  \| T^{*}_{j,l,k}\|_{L^{q^{\prime}}(\mathbb{R}^{3} \times [1/2,4]) \rightarrow L^{2}(\mathbb{R}^{3})} \le \|T_{j,l,k}T^{*}_{j,l,k}\|^{\frac{1}{2}}_{L^{q^{\prime} }(\mathbb{R}^{3} \times [1/2,4]) \rightarrow L^{q }(\mathbb{R}^{3} \times [1/2,4])}.
  \end{equation}
Notice that
\begin{equation}\label{dualestimate1}
  T_{j,l,k}T^{*}_{j,l,k}G(z,t^{\prime})  = \int_{\mathbb{R}} \widetilde{K_{j,l,k}}(\cdot,t, t^{\prime})  *_{y} G(\cdot,t) (z) \rho(t) \rho(t^{\prime})dt  ,
\end{equation}
where $*_{y}$ denotes convolution with respect to the variable $y$, and $\widetilde{K_{j,l,k}}(y,t, t^{\prime})$ is defined by (\ref{tildekerneljlk}).

When $j \le Ml$, $k\in \mathbb{Z}\bigcap [Ml-j,(M-1)l]$, or when  $j > Ml$, $k\in \biggl(\mathbb{Z}\bigcap (N_{1},(M-1)l]\biggl)\bigcup\{0\}$, by Plancherel's theorem and  (\ref{symbolb4}), (\ref{symbolb1}), (\ref{symbolb3}), there holds
\begin{equation}\label{dualestimate2}
\biggl\|  \widetilde{K_{j,l,k}}(\cdot,t, t^{\prime})  *_{y} G(\cdot,t) (z)   \biggl\|_{L_{z}^{2}(\mathbb{R}^{3})} \lesssim  2^{-2(j+k-Ml)N}  \|G(\cdot,t)\|_{L^{2}(\mathbb{R}^{3})}
\end{equation}
for fixed $t$,  $t^{\prime}$.
The inequality (\ref{kerneli3}) in Theorem \ref{kernel estimate3}  implies that for each $t \neq t^{\prime}$, the following estimate holds true
\begin{equation}\label{dualestimate3}
\biggl\|  \widetilde{K_{j,l,k}}(\cdot,t, t^{\prime})  *_{y} G(\cdot,t) (z)   \biggl\|_{L_{z}^{\infty}(\mathbb{R}^{3})} \lesssim 2^{\frac{5j}{2}+k} 2^{-2(j+k-Ml)N}  |t-t^{\prime}|^{-\frac{1}{2}}  \|G(\cdot,t)\|_{L^{1}(\mathbb{R}^{3})}.
\end{equation}
Interpolation between (\ref{dualestimate2}) and (\ref{dualestimate3}) yields
\begin{equation}\label{dualestimate6}
\biggl\|  \widetilde{K_{j,l,k}}(\cdot,t, t^{\prime})  *_{y} G(\cdot,t) (z)   \biggl\|_{L_{z}^{4}(\mathbb{R}^{3})} \lesssim 2^{\frac{5j}{4}+\frac{k}{2}}  2^{-2(j+k-Ml)N}  |t-t^{\prime}|^{-\frac{1}{4}}  \|G(\cdot,t)\|_{L^{\frac{4}{3}}(\mathbb{R}^{3})}.
\end{equation}
By  (\ref{dualestimate6}) and the Hardy-Littlewood-Sobolev inequality, we obtain
\begin{align}\label{dualestimate5}
& \biggl\|  \int_{\mathbb{R}} \widetilde{K_{j,l,k}}(\cdot,t, t^{\prime})  *_{y} G(\cdot,t) (z)\rho(t) \rho(t^{\prime}) dt \biggl\|_{L_{z,t^{\prime}}^{4}(\mathbb{R}^{3}\times [1/2,4])}\nonumber\\
&\le \biggl\| \int_{\mathbb{R}} \biggl\| \widetilde{K_{j,l,k}}(\cdot,t, t^{\prime}) *_{y} G(\cdot,t) (z) \biggl\|_{L_{z}^{4}(\mathbb{R}^{3})} \rho(t) \rho(t^{\prime})dt \biggl\|_{L_{t^{\prime}}^{4}([1/2,4] )} \nonumber\\
&\lesssim  2^{\frac{5j}{4}+\frac{k}{2}} 2^{-2(j+k-Ml)N} \biggl\| \int_{\mathbb{R}}  |t-t^{\prime}|^{-\frac{1}{4}}  \|G(\cdot,t) \|_{L^{\frac{4}{3}}(\mathbb{R}^{3})} \rho(t) \rho(t^{\prime}) dt \biggl\|_{L_{t^{\prime}}^{4}([1/2,4])} \nonumber\\
&\lesssim  2^{\frac{5j}{4}+\frac{k}{2}} 2^{-2(j+k-Ml)N} \biggl\| \int_{\mathbb{R}}  |t-t^{\prime}|^{-\frac{1}{2}}  \|G(\cdot,t) \|_{L^{\frac{4}{3}}(\mathbb{R}^{3})} \rho(t) \rho(t^{\prime}) dt \biggl\|_{L_{t^{\prime}}^{4}([1/2,4])} \nonumber\\
&\lesssim 2^{\frac{5j}{4}+\frac{k}{2}} 2^{-2(j+k-Ml)N}\|G\|_{L^{4/3}(\mathbb{R}^{3} \times [1/2,4])}.
\end{align}
Combining (\ref{dualestimate5}), (\ref{dualestimate1}) with (\ref{reducetodual}), we obtain
 \[\|F_{j,l,k}\|_{L^{2}(\mathbb{R}^{3}) \rightarrow L^{4}(\mathbb{R}^{3} \times [1/2,4]) }=\|T_{j,l,k}\|_{L^{2}(\mathbb{R}^{3}) \rightarrow L^{4}(\mathbb{R}^{3} \times [1/2,4]) } \lesssim 2^{\frac{5j}{8}+\frac{k}{4}} 2^{-(j+k-Ml)N}.\]
 It follows from (\ref{sobolevemb}) that
\begin{align}
\|\mathcal{M}_{j,l,k}\|_{L^{2}  \rightarrow L^{4} } \lesssim (1+c^{\frac{1}{4}}) 2^{-\frac{j}{4}}2^{\frac{3j}{8}+\frac{j+k}{4}} 2^{-(j+k-Ml)N}.\nonumber
\end{align}
When $j \le Ml$,  we apply (\ref{jlkdecompose1}) to sum over $k$, and obtain that
\begin{align}\label{L2L41}
\|\mathcal{M}_{j,l}\|_{L^{2}  \rightarrow L^{4} } &\lesssim \sum_{Ml-j \le k \le (M-1)l}(1+c^{\frac{1}{4}}) 2^{-\frac{j}{4}}2^{\frac{3j}{8}+\frac{j+k}{4}} 2^{-(j+k-Ml)N} \nonumber\\
&\lesssim (1+c^{\frac{1}{4}}) 2^{-\frac{j}{4}}2^{\frac{3j}{8}+\frac{Ml}{4}}.
\end{align}
When $j > Ml$, we still need to consider the case where $k=1$. By Plancherel's theorem, (\ref{symbolb2}) and (\ref{symbolb2+}), we have
\begin{equation}\label{tildekernelL2L2}
\biggl\|  \widetilde{K_{j,l,1}}(\cdot,t, t^{\prime})  *_{y} G(\cdot,t) (z)   \biggl\|_{L_{z}^{2}(\mathbb{R}^{3})} \lesssim  2^{-(j-Ml)}  \|G(\cdot,t)\|_{L^{2}(\mathbb{R}^{3})}.
\end{equation}
The inequality (\ref{kernelii3}) in Theorem \ref{kernel estimate3}  implies that for each $t \neq t^{\prime}$, the following estimate holds true
\begin{equation}\label{tildekernelL2L4}
\biggl\|  \widetilde{K_{j,l,1}}(\cdot,t, t^{\prime})  *_{y} G(\cdot,t) (z)   \biggl\|_{L_{z}^{\infty}(\mathbb{R}^{3})} \lesssim 2^{j+ \frac{3Ml}{2}}  |t-t^{\prime}|^{-1}  \|G(\cdot,t)\|_{L^{1}(\mathbb{R}^{3})}.
\end{equation}
Interpolation between (\ref{tildekernelL2L2}) and (\ref{tildekernelL2L4}) implies
\begin{equation}\label{dualestimate4++}
\biggl\|  \widetilde{K_{j,l,1}}(\cdot,t, t^{\prime})  *_{y} G(\cdot,t) (z)   \biggl\|_{L_{z}^{4}(\mathbb{R}^{3})} \lesssim 2^{\frac{5Ml}{4}}    |t-t^{\prime}|^{-\frac{1}{2}}  \|G(\cdot,t)\|_{L^{\frac{4}{3}}(\mathbb{R}^{3})}.
\end{equation}
By  (\ref{dualestimate4++}), the Hardy-Littlewood-Sobolev inequality,  (\ref{dualestimate1}) and (\ref{reducetodual}),  we can obtain that
 \[\|F_{j,l,1}\|_{L^{2}(\mathbb{R}^{3}) \rightarrow L^{4}(\mathbb{R}^{3} \times [1/2,4]) }=\|T_{j,l,1}\|_{L^{2}(\mathbb{R}^{3}) \rightarrow L^{4}(\mathbb{R}^{3} \times [1/2,4]) }  \lesssim 2^{\frac{5Ml}{8}}.\]
 It follows from (\ref{sobolevemb}) that
\begin{align}
\|\mathcal{M}_{j,l,1}\|_{L^{2}  \rightarrow L^{4} } \lesssim (1+c^{\frac{1}{4}}) 2^{-\frac{j}{4}}2^{\frac{5Ml}{8}}.\nonumber
\end{align}
By (\ref{jlkdecompose2}), we now sum over $k$ to obtain that
\begin{align}\label{L2L42}
&\|\mathcal{M}_{j,l}\|_{L^{2}
 \rightarrow L^{4} } \nonumber\\
&  \lesssim (1+c^{\frac{1}{4}}) \biggl( 2^{-\frac{j}{4}}2^{\frac{3j}{8}+\frac{j}{4}} 2^{-(j-Ml)N} + 2^{-\frac{j}{4}}2^{\frac{5Ml}{8}} + \sum_{N_{1} < k \le (M-1)l } 2^{-\frac{j}{4}}2^{\frac{3j}{8}+\frac{j+k}{4}} 2^{-(j+k-Ml)N}\biggl) \nonumber\\
&\lesssim (1+c^{\frac{1}{4}}) 2^{-\frac{j}{4}}2^{\frac{5Ml}{8}}, \quad j>Ml.
\end{align}

Furthermore,  to establish  the $L^{\frac{M+2}{4}} \rightarrow L^{\frac{M+2}{2}}$ boundedness for the operator $\mathcal{M}_{loc,j}$, we
interpolate between (\ref{L2L41}) and (\ref{Linftyinftyjl}) when $j \le Ml$, and between (\ref{L2L42}) and (\ref{Linftyinftyjl}) when $j > Ml$. Then applying  inequality (\ref{frequencydecompose1})  yields
\begin{align}\label{MlocjP3}
 &\|\mathcal{M}_{loc,j}  \|_{L^{\frac{M+2}{4}} \rightarrow L^{\frac{M+2}{2}} }
 \nonumber\\
 &\le (1 + c^{\frac{2}{M+2}}) \biggl( \sum_{ l \ge \frac{j}{M}} 2^{-l} 2^{-\frac{2(M-1)}{M+2}l} 2^{\frac{8}{M+2}(\frac{j}{8}+\frac{Ml}{4})} +  \sum_{l_{0} \le l < \frac{j}{M}} 2^{-l} 2^{-\frac{2(M-1)}{M+2}l} 2^{ \frac{8}{M+2} \cdot (-\frac{j}{4} + \frac{5Ml}{8})} \biggl)   \nonumber\\
 & \lesssim (1 + c^{\frac{2}{M+2}}) .
\end{align}

$\blacklozenge$ \textbf{Case $3 \le M \le 5$}. In order to build the $L^{\frac{3}{2}}  \rightarrow L^{3} $ estimate for $\mathcal{M}_{j,l}$, we will first obtain the $L^{1}  \rightarrow L^{\infty} $  estimate for $\mathcal{M}_{j,l}$, then interpolate between this estimate and  the $L^{2} \rightarrow L^{2} $ estimate for $\mathcal{M}_{j,l}$ by (\ref{L2L21}) and (\ref{L2L22}).

 For each $j \le Ml$, $k\in \mathbb{Z}\bigcap [Ml-j,(M-1)l]$, and $j > Ml$, $k\in \biggl(\mathbb{Z}\bigcap (N_{1},(M-1)l]\biggl)\bigcup\{0\}$,  by (\ref{sobolevemb}), Young's inequality and  (\ref{kerneli2}) in Theorem \ref{kernel estimate3}, we have
\begin{equation}
\|\mathcal{M}_{j,l,k}\|_{L^{1} \rightarrow L^{\infty} } \lesssim 2^{-\frac{j}{2}} \cdot \sup_{t \in [1/2,4]} \sup_{y \in \mathbb{R}^{3}}|K_{j,l,k}(y,t)| \lesssim 2^{2j+k}2^{-(j+k-Ml)N}. \nonumber
\end{equation}
When $j>Ml$, it follows from  (\ref{kernelii2}) in Theorem \ref{kernel estimate3} that
\begin{equation}
\|\mathcal{M}_{j,l,1}\|_{L^{1}  \rightarrow L^{\infty} } \lesssim \sup_{t \in [1/2,4]} \sup_{y \in \mathbb{R}^{3}}|K_{j,l,1}(y,t)| \lesssim 2^{j+Ml}. \nonumber
\end{equation}
Then we sum over $k$ according to (\ref{jlkdecompose1}) to obtain that
\begin{equation}
\|\mathcal{M}_{j,l}\|_{L^{1}  \rightarrow L^{\infty} } \lesssim \sum_{j-Ml \le k \le (M-1)l} 2^{2j+k}2^{-(j+k-Ml)N} \lesssim 2^{j+Ml}, \quad j \le Ml, \nonumber
\end{equation}
and by (\ref{jlkdecompose2}),
\begin{align}
\label{Linfty1jl2}
\|\mathcal{M}_{j,l}\|_{L^{1}  \rightarrow L^{\infty} } &\lesssim 2^{2j} \cdot 2^{-(j-Ml)N}+2^{j+Ml}+\sum_{N_{1} < k \le (M-1)l} 2^{2j+k}2^{-(j+k-Ml)N} \nonumber\\
&\lesssim 2^{j+Ml}, \quad j > Ml. \nonumber
\end{align}
Hence, for each $j,l$, there holds
\begin{equation}\label{Linfty1jl}
\|\mathcal{M}_{j,l}\|_{L^{1} \rightarrow L^{\infty} }  \lesssim 2^{j+Ml}.
\end{equation}
Then we  interpolate between (\ref{Linfty1jl}) and (\ref{L2L21}) for $j \le Ml$ to obtain that
\begin{equation}\label{L31}
 \| \mathcal{M}_{j,l}  \|_{L^{\frac{3}{2}}  \rightarrow L^{3} } \lesssim (1+c^{\frac{1}{3}})2^{\frac{j}{3} + \frac{Ml}{3}}, \quad j \le Ml,
\end{equation}
and interpolate between  (\ref{Linfty1jl}) and (\ref{L2L22}) for $j>Ml$ to get
\begin{equation}\label{L32}
 \| \mathcal{M}_{j,l}  \|_{L^{\frac{3}{2}}  \rightarrow L^{3} } \lesssim  (1+c^{\frac{1}{3}}) 2^{ \frac{2Ml}{3}}, \quad j > Ml.
\end{equation}
Finally,  we interpolate  between (\ref{L31}) and (\ref{L2L41}) when $j \le Ml$, and  between (\ref{L32}) and (\ref{L2L42}) when $j > Ml$, then apply (\ref{frequencydecompose1}) to sum over $l$,
\begin{align}\label{MlocjP3+}
  \|\mathcal{M}_{loc,j} \|_{L^{\frac{5M+2}{2M+4}} \rightarrow L^{\frac{5M+2}{M+2}} }
 &\le (1 + c^{\frac{M+2}{5M+2}}) \biggl( \sum_{ l \ge \frac{j}{M}} 2^{-l} 2^{-\frac{(M+2)(M-1)}{5M+2}l} 2^{\frac{8M-16}{5M+2}(\frac{j}{8} + \frac{Ml}{4})}2^{\frac{18-3M}{5M+2}(\frac{j}{3}+\frac{Ml}{3})} \nonumber\\
 &+  \sum_{l_{0} \le l < \frac{j}{M}} 2^{-l} 2^{-\frac{(M+2)(M-1)}{5M+2}l} 2^{\frac{8M-16}{5M+2}\cdot (-\frac{j}{4}+\frac{5Ml}{8})} 2^{\frac{18-3M}{5M+2} \cdot \frac{2Ml}{3}} \biggl) \nonumber\\
 & \lesssim (1 + c^{\frac{M+2}{5M+2}})   .
\end{align}

\textbf{$\bullet$ Estimation for $\mathcal{M}_{loc,j}$ at $P_{4}$.} Recalling that $P_{4} = (\frac{3}{M+2}, \frac{1}{M+2})$ when $M \ge 4$, and $P_{4}=(\frac{3M+6}{8M+4},\frac{M+2}{8M+4})$ when $M =3$, we give a brief idea of the proof. Notice that $0<\frac{1}{M+2} \le \frac{1}{6}$ for $M \ge 4$, and $\frac{1}{6}< \frac{M+2}{8M+4} < \frac{1}{5} $ for $M=3$. Hence, for $M \ge 4$, to obtain the estimation of $\mathcal{M}_{loc,j}$ at $P_{4}$, we first build the   $L^{2}  \rightarrow L^{6} $ estimate of $\mathcal{M}_{j,l}$, and interpolate between this estimate  and the $L^{\infty} \rightarrow L^{\infty} $ estimate of $\mathcal{M}_{j,l}$, then use (\ref{frequencydecompose1}) to sum over $l$. For $M = 3$, we first build the $L^{\frac{5}{3}}  \rightarrow L^{5}  $ estimate for $\mathcal{M}_{j,l}$, then interpolate between this estimate and  the $L^{2}  \rightarrow L^{6}  $ estimate for $\mathcal{M}_{j,l}$, finally the desired estimate is followed by summation over $l$ via (\ref{frequencydecompose1}).

$\blacklozenge$ \textbf{Case $M \ge 4$.} We first establish the  $L^{2}  \rightarrow L^{6}  $ estimate for $\mathcal{M}_{j,l,k}$.  When $j \le Ml$, $k\in \mathbb{Z}\bigcap [Ml-j,(M-1)l]$, or when  $j > Ml$, $k\in \biggl(\mathbb{Z}\bigcap (N_{1},(M-1)l]\biggl)\bigcup\{0\}$, for fixed $t$, $t^{\prime}$,
interpolation between (\ref{dualestimate2}) and (\ref{dualestimate3}) yields
\begin{equation}\label{dualestimate4}
\biggl\|  \widetilde{K_{j,l,k}}(\cdot,t, t^{\prime})  *_{y} G(\cdot,t) (z)   \biggl\|_{L_{z}^{6}(\mathbb{R}^{3})} \lesssim 2^{\frac{5j}{3}+\frac{2k}{3}}  2^{-2(j+k-Ml)N}  |t-t^{\prime}|^{-\frac{1}{3}}  \|G(\cdot,t)\|_{L^{\frac{6}{5}}(\mathbb{R}^{3})}.
\end{equation}
Combining with (\ref{dualestimate4}) and the Hardy-Littlewood-Sobolev inequality, we obtain
\begin{align}
 &\biggl\| \int_{\mathbb{R}} \widetilde{K_{j,l,k}}(\cdot,t, t^{\prime})  *_{y} G(\cdot,t) (z) dt \biggl\|_{L_{z,t^{\prime}}^{6}(\mathbb{R}^{3}\times [1/2,4]) } \nonumber\\
&\lesssim  2^{\frac{5j}{3}+\frac{2k}{3}} 2^{-2(j+k-Ml)N} \biggl\| \int_{\mathbb{R}}  |t-t^{\prime}|^{-\frac{1}{3}}  \|G(\cdot,t)\|_{L^{\frac{6}{5}}(\mathbb{R}^{3})} \rho(t)\rho(t^{\prime}) dt \biggl\|_{L_{t^{\prime}}^{6}([1/2,4])} \nonumber\\
&\lesssim 2^{\frac{5j}{3}+\frac{2k}{3}} 2^{-2(j+k-Ml)N}\|G\|_{L^{6/5}(\mathbb{R}^{3} \times [1/2,4])}.\nonumber
\end{align}
Then it follows from (\ref{dualestimate1}) and (\ref{reducetodual}) that
 \[\|F_{j,l,k}\|_{L^{2}(\mathbb{R}^{3}) \rightarrow L^{6}(\mathbb{R}^{3} \times [1/2,4]) } =\|T_{j,l,k}\|_{L^{2}(\mathbb{R}^{3}) \rightarrow L^{6}(\mathbb{R}^{3} \times [1/2,4]) }\lesssim 2^{\frac{5j}{6}+\frac{k}{3}} 2^{-(j+k-Ml)N}.\]
Using (\ref{sobolevemb}), we obtain the following estimate
\begin{align}
\|\mathcal{M}_{j,l,k}\|_{L^{2}  \rightarrow L^{6} } \lesssim (1+c^{\frac{1}{6}}) 2^{\frac{j}{2}+\frac{k}{3}} 2^{-(j+k-Ml)N}.\nonumber
\end{align}
When $j \le Ml$, we sum over $k$ according to (\ref{jlkdecompose1}) to get
\begin{equation}\label{L2L61}
\|\mathcal{M}_{j,l}\|_{L^{2}\rightarrow L^{6}} \lesssim \sum_{Ml-j \le k \le (M-1)l}(1+c^{\frac{1}{6}}) 2^{\frac{j}{2}+\frac{k}{3}} 2^{-(j+k-Ml)N}\lesssim (1+c^{\frac{1}{6}}) 2^{\frac{j}{6} + \frac{Ml}{3}}.
\end{equation}
When $j > Ml$, we are left  to consider the case where  $k=1$. The inequality (\ref{kernelii3}) in
 Theorem \ref{kernel estimate3}  implies that for each $t \neq t^{\prime}$, the following estimate holds true
\begin{equation}\label{tildekernelL2L4+}
\biggl\|  \widetilde{K_{j,l,1}}(\cdot,t, t^{\prime})  *_{y} G(\cdot,t) (z)   \biggl\|_{L_{z}^{\infty}(\mathbb{R}^{3})} \lesssim 2^{\frac{3j}{2}+ Ml}  |t-t^{\prime}|^{-\frac{1}{2}}  \|G(\cdot,t)\|_{L^{1}(\mathbb{R}^{3})}.
\end{equation}
Interpolation between (\ref{tildekernelL2L2}) with (\ref{tildekernelL2L4+}) implies
\begin{equation}\label{dualestimate4+}
\biggl\|  \widetilde{K_{j,l,1}}(\cdot,t, t^{\prime})  *_{y} G(\cdot,t) (z)   \biggl\|_{L_{z}^{6}(\mathbb{R}^{3})} \lesssim 2^{\frac{2}{3}j+ Ml}    |t-t^{\prime}|^{-\frac{1}{3}}  \|G(\cdot,t)\|_{L^{\frac{6}{5}}(\mathbb{R}^{3})}.
\end{equation}
By (\ref{dualestimate4+}) and the Hardy-Littlewood-Sobolev inequality, there holds
\begin{align}
 \biggl\| \int_{\mathbb{R}} \widetilde{K_{j,l,1}}(\cdot,t, t^{\prime})  *_{y} G(\cdot,t) (z) dt \biggl\|_{L_{z,t^{\prime}}^{6}(\mathbb{R}^{3} \times [1/2,4])}
\lesssim 2^{\frac{2}{3}j+ Ml}  \|G\|_{L^{6/5}(\mathbb{R}^{3} \times [1/2,4])}.\nonumber
\end{align}
Then combining with  (\ref{reducetodual}) and (\ref{dualestimate1}), we get
 \[\|F_{j,l,1}\|_{L^{2}(\mathbb{R}^{3}) \rightarrow L^{6}(\mathbb{R}^{3} \times [1/2,4])}=\|T_{j,l,1}\|_{L^{2}(\mathbb{R}^{3}) \rightarrow L^{6}(\mathbb{R}^{3} \times [1/2,4])}  \lesssim 2^{\frac{j}{3}+ \frac{Ml}{2}}.\]
By (\ref{sobolevemb}),
\begin{align}
\|\mathcal{M}_{j,l,1}\|_{L^{2}  \rightarrow L^{6} } \lesssim (1+c^{\frac{1}{4}}) 2^{\frac{Ml}{2}}. \nonumber
\end{align}
We sum over $k$  by (\ref{jlkdecompose2}) to obtain that
\begin{align}\label{L2L62}
\|\mathcal{M}_{j,l}\|_{L^{2}  \rightarrow L^{6} } &\lesssim (1+c^{\frac{1}{6}}) \biggl(  2^{\frac{j}{2}}\cdot 2^{-(j-Ml)N} +2^{\frac{Ml}{2}} +\sum_{N_{1} < k \le (M-1)l} 2^{\frac{j}{2}+\frac{k}{3}} 2^{-(j+k-Ml)N} \biggl) \nonumber\\
&\lesssim (1+c^{\frac{1}{6}}) 2^{\frac{Ml}{2}}.
\end{align}

Now we interpolate between (\ref{L2L61}) and (\ref{Linftyinftyjl}) when $j \le Ml$, and  between (\ref{L2L62}) and (\ref{Linftyinftyjl}) when $j > Ml$,  and  apply inequality (\ref{frequencydecompose1})  to obtain that
\begin{align}\label{MlocjP4}
& \|\mathcal{M}_{loc,j}  \|_{L^{\frac{M+2}{3}}  \rightarrow L^{M+2} } \nonumber\\
 &\le (1+c^{\frac{1}{M+2}}) \biggl( \sum_{ l \ge \frac{j}{M}} 2^{-l} 2^{-\frac{2(M-1)}{M+2}l} 2^{\frac{6}{M+2}(\frac{j}{6}+ \frac{Ml}{3})}  +  \sum_{l_{0} \le l < \frac{j}{M}} 2^{-l} 2^{-\frac{2(M-1)}{M+2}l} 2^{\frac{6}{M+2} \cdot \frac{Ml}{2}} \biggl) \nonumber\\
 & \lesssim (1+c^{\frac{1}{M+2}}) j  .
\end{align}

$\blacklozenge$ \textbf{Case $M=3$.} We first  interpolate between the $L^{2}  \rightarrow L^{4} $ estimate  in (\ref{L2L41})  and the $L^{1} \rightarrow L^{\infty} $ estimate in (\ref{Linfty1jl})  to obtain
\begin{equation}\label{L51}
 \| \mathcal{M}_{j,l} \|_{L^{\frac{5}{3}} \rightarrow L^{5}} \lesssim 2^{\frac{3j}{10}+\frac{2Ml}{5}} , \quad j \le Ml,
\end{equation}
then we interpolate between the $L^{2} \rightarrow L^{4} $  estimate in (\ref{L2L42})
and the $L^{1}  \rightarrow L^{\infty} $ estimate in (\ref{Linfty1jl})  to get
\begin{equation}\label{L52}
 \|  \mathcal{M}_{j,l}\|_{L^{\frac{5}{3}} \rightarrow L^{5}} \lesssim 2^{\frac{7Ml}{10} } , \quad j>Ml.
\end{equation}
Finally,  we interpolate between (\ref{L2L61}) and (\ref{L51}) when $j \le Ml$, and between (\ref{L2L62})  and (\ref{L52}) when $j>Ml$, and  apply the inequality (\ref{frequencydecompose1})  to get
\begin{align}\label{MlocjP4+}
& \| \mathcal{M}_{loc,j}  \|_{L^{\frac{8M+4}{3M+6}}  \rightarrow L^{\frac{8M+4}{M+2}} } \nonumber\\
 &\le (1+c^{\frac{M+2}{8M+4}}) \biggl( \sum_{ l \ge \frac{j}{M}} 2^{-l} 2^{-\frac{(2M+4)(M-1)}{8M+4}l} 2^{\frac{18M-36}{8M+4} (\frac{j}{6} + \frac{Ml}{3}) } 2^{\frac{40-10M}{8M+4}(\frac{3j}{10} + \frac{2Ml}{5}) } \nonumber\\
 &+  \sum_{l_{0} \le l < \frac{j}{M}} 2^{-l} 2^{-\frac{(2M+4)(M-1)}{8M+4}l} 2^{\frac{18M-36}{8M+4} \cdot \frac{Ml}{2}} 2^{\frac{40-10M}{8M+4} \cdot \frac{7Ml}{10}} \biggl)  \nonumber\\
 & \lesssim (1+c^{\frac{M+2}{8M+4}})  j .
\end{align}

$\bullet$  \textbf{Estimation for $\mathcal{M}_{loc,j}$ at $P_{5}$.}  We recall that $M=5$ and $P_{5} = (\frac{1}{2},\frac{1}{2} \cdot \frac{M-2}{M+2})$. By interpolating between (\ref{L2L41}) and (\ref{L2L61}) when $j \le Ml$, and between (\ref{L2L42}) and (\ref{L2L62}) when $j >Ml$, and applying inequality (\ref{frequencydecompose1}), we can obtain
\begin{align}
& \| \mathcal{M}_{loc,j}  \|_{L^{2}  \rightarrow L^{\frac{2(M+2)}{M-2}} } \nonumber\\
 &\le (1+c^{\frac{1}{2}\cdot \frac{M-2}{M+2}}) \biggl( \sum_{ l \ge \frac{j}{M}} 2^{-l} 2^{-\frac{2(M-1)l}{M+2}} 2^{\frac{18-3M}{M+2} (\frac{j}{6} + \frac{Ml}{3}) } 2^{\frac{4M-16}{M+2}(\frac{j}{8} + \frac{Ml}{4}) }  \nonumber\\
&\quad  +  \sum_{l_{0} \le l < \frac{j}{M}} 2^{-l} 2^{-\frac{2(M-1)}{M+2}l} 2^{\frac{18-3M}{M+2} \cdot \frac{Ml}{2}} 2^{\frac{4M-16}{M+2}  (-\frac{j}{4}+ \frac{5Ml}{8})} \biggl)  \nonumber\\
 & \lesssim (1+c^{\frac{1}{2}\cdot \frac{M-2}{M+2}}) . \nonumber
\end{align}

Then we finish the proof  of Theorem \ref{mainth3}.

\subsection{Proof of Theorem \ref{kernel estimate3}  }\label{proofofmainth4}
We first prove Lemma \ref{kernel estimate4} and Lemma \ref{kernel estimate1} below, which will be helpful in the proof of Theorem \ref{kernel estimate3}. Then we state the proof of Theorem \ref{kernel estimate3} at the end of this subsection.

\begin{lemma}\label{kernel estimate4}
Let $\xi^{\prime} =(\xi_{1}^{\prime},\xi_{2}^{\prime},\xi^{\prime}_{3})$. Define
\begin{align}
\mathcal{K}_{1}(y,t)&=  2^{3j+k} \int_{\mathbb{R}^{3}} e^{i 2^{j}\bigl[y_{1}  \xi_{1}^{\prime}+ 2^{k} y_{2}\xi_{2}^{\prime}+  y_{3} \xi_{3}^{\prime} + t\xi_{3}^{\prime}\Psi_{1}(\frac{\xi^{\prime}_{1}}{\xi^{\prime}_{3}}) \bigl]}   b_{j,k}(\xi^{\prime},t) d\xi_{1}^{\prime}d\xi_{2}^{\prime}d\xi_{3}^{\prime},
\end{align}
where
\begin{equation}\label{Psi1}
\Psi_{1}(\frac{\xi^{\prime}_{1}}{\xi^{\prime}_{3}})= c_{d}(-\frac{\xi^{\prime}_{1}}{\xi^{\prime}_{3}})^{d/(d-1)},
\end{equation}
$b_{j,k}$ is a smooth, compactly supported function with  $\mathrm{supp}$ $b_{j,k}(\cdot,t) \subset \{ \xi^{\prime} \in \mathbb{R}^{3}: |\xi_{3}^{\prime}|  \le |\xi_{1}^{\prime}| \le 2|\xi_{3}^{\prime}|, |\xi^{\prime}_{2}| \le 1 , |\xi^{\prime}_{3}| \sim 1 \}$. Moreover, for each multi-index $\alpha$,  $b_{j,k}(\xi^{\prime},t)$ satisfies
\begin{equation}
|D^{\alpha}_{\xi^{\prime}} b_{j,k}(\xi^{\prime},t)|  \lesssim 1
\end{equation}
  uniformly for each $(\xi^{\prime},t) \in$  $\mathrm{supp}$ $b_{j,k}$.\\
(1) For each $t \in [1/2,4]$, we have the uniform estimate
\begin{equation}\label{L1E1}
\|\mathcal{K}_{1}(\cdot,t)\|_{L^{1}(\mathbb{R}^{3})} \lesssim 2^{\frac{j}{2}}.
\end{equation}
(2) For each $t \in (0,+\infty)$ and $y \in \mathbb{R}^{3}$, there holds
\begin{equation}\label{decay1}
|\mathcal{K}_{1}(y,t)| \lesssim 2^{3j+k}(1+2^{j}t)^{-\frac{1}{2}},
\end{equation}
where the implied constant does not depend on $y$ and $t$.
\end{lemma}
\begin{proof}
\textbf{(1)} We take partition of unity, and decompose $\mathcal{K}_{1}$ as
\begin{equation}\label{decompose1}
\mathcal{K}_{1}(y,t) = \sum_{\kappa \in \mathbb{I}_{1}} \mathcal{K}_{1,\kappa}(y,t),
\end{equation}
 where $\mathbb{I}_{1}:= 2^{-\frac{j}{2}}  \mathbb{Z} \cap [1,2]$,
\[ \mathcal{K}_{1,\kappa}(y,t)=2^{3j+k} \int_{\mathbb{R}^{3}} e^{i 2^{j}\bigl[y_{1}  \xi_{1}^{\prime}+ 2^{k} y_{2}\xi_{2}^{\prime}+  y_{3} \xi_{3}^{\prime} + t \xi^{\prime}_{3}\Psi_{1}(\frac{\xi^{\prime}_{1}}{\xi^{\prime}_{3}}) \bigl]}   b_{j,k,\kappa}(\xi^{\prime},t) d\xi_{1}^{\prime}d\xi_{2}^{\prime}d\xi_{3}^{\prime}, \]
and $b_{j,k, \kappa}(\xi^{\prime},t) = b_{j,k}(\xi^{\prime},t) \tilde{\chi}\biggl( 2^{\frac{j}{2}} (\frac{\xi^{\prime}_{1}}{\xi^{\prime}_{3}}-\kappa)\biggl)$, $\tilde{\chi}$ is a smooth cutoff function supported in $[1,2]$. For fixed $\kappa$, changing variables $\xi^{\prime}_{1} \rightarrow 2^{-\frac{j}{2}}  \eta_{1} + \kappa \eta_{3}$, $\xi^{\prime}_{2} \rightarrow \eta_{2}$, $\xi^{\prime}_{3} \rightarrow \eta_{3}$, we denote the phase function by
\begin{align}
 \Psi_{j,k,y,t}(\eta)= 2^{\frac{j}{2}} y_{1}\eta_{1} + 2^{j+k}y_{2}\eta_{2} + 2^{j}(y_{3}+ \kappa y_{1} )\eta_{3}
  +t2^{j} \eta_{3}\Psi_{1} \biggl( 2^{-\frac{j}{2}}\frac{\eta_{1} }{\eta_{3}}+\kappa \biggl)  . \nonumber
\end{align}
By Taylor expansion of $\Psi_{1} \biggl(2^{-\frac{j}{2}} \frac{\eta_{1} }{\eta_{3}}+\kappa \biggl) $ at $\eta_{1}=0$,
\begin{align}
  \Psi_{j,k,y,t}(\eta) &=2^{\frac{j}{2}} (y_{1} + t\Psi_{1}^{\prime}(\kappa))\eta_{1} + 2^{j+k}y_{2}\eta_{2} + 2^{j}(y_{3}+ \kappa y_{1} + t\Psi_{1}(\kappa) )\eta_{3} \nonumber\\
 &\quad + t\eta_{3} \bigl[\frac{\eta_{1}^{2}}{ 2\eta^{2}_{3}}\Psi_{1}^{\prime \prime}  ( \kappa  ) +   \tilde{R}_{j}(\frac{\eta_{1}}{\eta_{3}},   \kappa) \bigl], \nonumber
\end{align}
where we have put all the remainder terms into $\tilde{R}_{j}$.
Since
\[\biggl|\nabla_{\eta} \biggl(t\eta_{3} \bigl[\frac{\eta_{1}^{2}}{ 2\eta^{2}_{3}}\Psi_{1}^{\prime \prime}  ( \kappa  ) +   \tilde{R}_{j}(\frac{\eta_{1}}{\eta_{3}},   \kappa) \bigl] \biggl)\biggl| \lesssim 1, \]
we have
\[|\nabla_{\eta}\Psi_{j,k,y,t}(\eta)| \gtrsim 2^{\frac{j}{2}} |y_{1} + t\Psi_{1}^{\prime}(\kappa)| +2^{j+k}|y_{2}| +2^{j}|y_{3}+ \kappa y_{1} + t\Psi_{1}(\kappa) | \]
provided that
\[2^{\frac{j}{2}} |y_{1} + t\Psi_{1}^{\prime}(\kappa)| +2^{j+k}|y_{2}| +2^{j}|y_{3}+ \kappa y_{1} + t\Psi_{1}(\kappa) | \gg 1.\]
Then integration by parts yields
\[|\mathcal{K}_{1,\kappa}(y,t)| \le  2^{\frac{5j}{2}+k}    (1+2^{\frac{j}{2}} |y_{1} + t\Psi_{1}^{\prime}(\kappa)| +2^{j+k}|y_{2}| +2^{j}|y_{3}+ \kappa y_{1} + t\Psi_{1}(\kappa) |)^{-4}.\]
Then we have for $t\in [1/2,4]$,
\[\|\mathcal{K}_{1,\kappa}(\cdot,t)\|_{L^{1}} \lesssim 1. \]
Combining with (\ref{decompose1}) and the fact that the number of the elements in $ \mathbb{I}_{1}$ is bounded by $2^{\frac{j}{2}}$, we arrive at  (\ref{L1E1}).

\textbf{(2) } If $t \lesssim 2^{-j}$,
\[|\mathcal{K}_{1}(y,t)| \lesssim 2^{3j +k } \lesssim 2^{3j+k}(1+2^{j}t)^{-\frac{1}{2}}, \]
then (\ref{decay1}) follows. Next we consider the case $t \gg 2^{-j}$,
we rewrite $\mathcal{K}_{1}(y,t)$ as
\begin{align}
\mathcal{K}_{1}(y,t)&=  2^{3j+k} \int_{\mathbb{R}^{2}} e^{i 2^{j}\bigl( 2^{k} y_{2}\xi_{2}^{\prime}+  y_{3} \xi_{3}^{\prime}   \bigl)} \int_{\mathbb{R}}  e^{i 2^{j}\xi_{3}^{\prime}\bigl[y_{1}  \frac{\xi_{1}^{\prime}}{\xi^{\prime}_{3}}   + tc_{d} \Psi_{1}(\frac{\xi^{\prime}_{1}}{\xi^{\prime}_{3}}) \bigl]} b_{j,k}(\xi^{\prime},t) d\xi_{1}^{\prime}d\xi_{2}^{\prime}d\xi^{\prime}_{3}. \nonumber
\end{align}
We change variables $\xi_{1}^{\prime} \rightarrow \eta_{1}\eta_{3}$, $\xi^{\prime}_{2} \rightarrow \eta_{2}$, $\xi_{3}^{\prime} \rightarrow \eta_{3}$,
\begin{align}\label{scaling1}
\mathcal{K}_{1}(y,t)&=  2^{3j+k} \int_{\mathbb{R}^{2}} e^{i 2^{j}\bigl( 2^{k} y_{2}\eta_{2}+  y_{3} \eta_{3}   \bigl)} \int_{\mathbb{R}}  e^{i 2^{j}t\eta_{3}\bigl[\frac{y_{1}}{t}  \eta_{1}  +  c_{d} \Psi_{1}(\eta_{1}) \bigl]}b_{j,k} (\tilde{\eta},t)  \eta_{3} d\eta_{1} d\eta_{2}d\eta_{3},
\end{align}
where $\tilde{\eta}=(\eta_{1}\eta_{3},\eta_{2},\eta_{3})$. We consider the integral
\[I_{1}:= \int_{\mathbb{R}}  e^{i 2^{j}t\eta_{3}\bigl[\frac{y_{1}}{t}  \eta_{1}  +  c_{d} \Psi_{1}(\eta_{1}) \bigl]} b_{j,k} (\tilde{\eta},t) d\eta_{1}.\]
Notice that $|\eta_{1}| \sim 1$ and $|\eta_{3}| \sim 1$, then
\[\biggl|\partial^{2}_{\eta_{1}} \biggl(\eta_{3}\bigl[\frac{y_{1}}{t}  \eta_{1}  +  c_{d} \Psi_{1}(\eta_{1}) \bigl]\biggl)\biggl| \sim |\eta_{1}|^{\frac{2-d}{d-1}} \sim 1. \]
Hence, we can apply von der Corupt's lemma to obtain that
\begin{equation}\label{osc1}
|I_{1}| \lesssim (2^{j}t)^{-\frac{1}{2}}.
\end{equation}
Then (\ref{decay1}) follows from (\ref{scaling1}) and (\ref{osc1}).
\end{proof}

\begin{lemma}\label{kernel estimate1}
Let
\begin{equation}\label{kernel4}
\mathcal{K}_{2}(y,t) =2^{3j} \int_{\mathbb{R}^{3}} e^{i 2^{j} \bigl[y \cdot \xi^{\prime} + t\xi^{\prime}_{3}\bigl( \Psi_{1}(\frac{\xi^{\prime}_{1}}{\xi^{\prime}_{3}})+ 2^{-Ml} \Psi_{2} (\frac{\xi^{\prime}_{1}}{\xi^{\prime}_{3}},\frac{\xi^{\prime}_{2}}{\xi^{\prime}_{1}})\bigl) \bigl] }   b_{j,l}(\xi^{\prime},t) d\xi^{\prime},
\end{equation}
where $j > Ml$, $\Psi_{1}$ can be found in (\ref{Psi1}), and
\begin{equation}\label{Psi2}
\Psi_{2}(\frac{\xi^{\prime}_{1}}{\xi^{\prime}_{3}},\frac{\xi^{\prime}_{2}}{\xi^{\prime}_{1}})  = c_{d,M}  \bigl(-\frac{\xi^{\prime}_{1}}{\xi^{\prime}_{3}}\bigl)^{d/(d-1)} \bigl(-\frac{\xi^{\prime}_{2}}{\xi^{\prime}_{1}}\bigl)^{M/(M-1)} + 2^{- l} R_{l}(\frac{\xi^{\prime}_{1}}{\xi^{\prime}_{3}}, \frac{\xi^{\prime}_{2}}{\xi^{\prime}_{1}}),
\end{equation}
$R_{l}$ is the same function as that in  (\ref{Rl}), and $b_{j, l}$ is a smooth, compactly supported function with  $\mathrm{supp}$ $b_{j,l}(\cdot,t) \subset \{ \xi^{\prime} \in \mathbb{R}^{3}: |\xi_{3}^{\prime}|  \le |\xi_{1}^{\prime}| \le 2|\xi_{3}^{\prime}|,  |\xi_{3}^{\prime}|  \le |\xi_{2}^{\prime}| \le 2|\xi^{\prime}_{3}|,  |\xi^{\prime}_{3}| \sim 1  \}$.
For each multi-index $\alpha$,  $b_{j,l}(\xi^{\prime},t)$ satisfies
\begin{equation}
|D^{\alpha}_{\xi^{\prime}} b_{j,l}(\xi^{\prime},t)|  \lesssim 1
\end{equation}
  uniformly for  each $(\xi^{\prime},t) \in$ $\mathrm{supp}$ $b_{j,l}$. \\
 (1) For each  $t \in [1/2,4]$, we obtain
 \begin{equation}\label{L1E2}
\|\mathcal{K}_{2}(\cdot,t)\|_{L^{1}(\mathbb{R}^{3})} \lesssim 2^{j-\frac{Ml}{2}}.
 \end{equation}
(2) For each $t>0$ and $y \in \mathbb{R}^{3}$, there holds the uniform estimate
\begin{equation}\label{decay2}
|\mathcal{K}_{2}(y,t)| \lesssim 2^{3j}(1+ 2^{j}t)^{-\frac{1}{2}}(1+ 2^{j-Ml}t)^{-\frac{1}{2}}.
\end{equation}
\end{lemma}

\begin{proof}
\textbf{(1)}  We decompose
\begin{equation}\label{decompose2}
\mathcal{K}_{2}(y,t) = \sum_{\kappa \in \mathbb{I}_{1}}\sum_{\tilde{\kappa} \in \mathbb{I}_{2}} \mathcal{K}_{2,\kappa, \tilde{\kappa}}(y,t),
\end{equation}
 where
\[ \mathcal{K}_{2,\kappa, \tilde{\kappa}}(y,t)=2^{3j} \int_{\mathbb{R}^{3}} e^{i 2^{j} \bigl[y \cdot \xi^{\prime} + t\xi^{\prime}_{3}\bigl( \Psi_{1}(\frac{\xi^{\prime}_{1}}{\xi^{\prime}_{3}})+ 2^{-Ml} \Psi_{2} (\frac{\xi^{\prime}_{1}}{\xi^{\prime}_{3}},\frac{\xi^{\prime}_{2}}{\xi^{\prime}_{1}})\bigl) \bigl] }   b_{j,l,\kappa,\tilde{\kappa}}(\xi^{\prime},t) d\xi^{\prime},\]
and
$b_{j,l,\kappa,\tilde{\kappa}}(\xi^{\prime},t) =b_{j,l}(\xi^{\prime},t) \tilde{\chi}\biggl(2^{\frac{j}{2}}(\frac{\xi^{\prime}_{1}}{\xi^{\prime}_{3}}-\kappa)\biggl)\tilde{\chi}\biggl( 2^{\frac{j-Ml}{2}} (\frac{\xi^{\prime}_{2}}{\xi^{\prime}_{3}}-\tilde{\kappa})\biggl)$.
Then we change variables $\xi^{\prime}_{1} \rightarrow 2^{-\frac{j}{2}} \eta_{1} + \kappa \eta_{3}$, $\xi^{\prime}_{2}\rightarrow 2^{-\frac{j-Ml}{2}}\eta_{2} + \tilde{\kappa}\eta_{3}$, $\xi^{\prime}_{3}\rightarrow \eta_{3}$, and denote  the phase function as follows
\begin{align}
\Psi_{j,l,y,t}(\eta)& =2^{\frac{j}{2}}  y_{1}\eta_{1} + 2^{\frac{j+Ml}{2}} y_{2}\eta_{2} + 2^{j}(y_{3}+ \kappa y_{1} + \tilde{\kappa}y_{2})\eta_{3}  \nonumber\\
&\quad +t2^{j} \eta_{3}\Psi_{1} \biggl( 2^{-\frac{j}{2}}\frac{\eta_{1}}{\eta_{3}} +\kappa\biggl) + t 2^{j-Ml}\eta_{3}\Psi_{2} \biggl(2^{-\frac{j}{2}}\frac{\eta_{1}}{\eta_{3}} +\kappa,\frac{2^{-\frac{j-Ml}{2}}\eta_{2} + \tilde{\kappa}\eta_{3}}{2^{-\frac{j}{2}}\eta_{1} + \kappa \eta_{3}}\biggl). \nonumber
\end{align}
Then by Taylor expansion of
\[\Psi_{1} \biggl( 2^{-\frac{j}{2}}\frac{\eta_{1}}{\eta_{3}} +\kappa\biggl)  + 2^{-Ml}\Psi_{2} \biggl(2^{-\frac{j}{2}}\frac{\eta_{1}}{\eta_{3}} +\kappa,\frac{2^{-\frac{j-Ml}{2}}\eta_{2} + \tilde{\kappa}\eta_{3}}{2^{-\frac{j}{2}}\eta_{1} + \kappa \eta_{3}}\biggl)\]
 at $(\eta_{1},\eta_{2})=(0,0)$,
we have
\begin{align}
&\Psi_{j,l,y,t}(\eta) \nonumber\\
&=2^{\frac{j}{2}} \biggl(y_{1}+ t \Psi_{1}^{\prime}(\kappa) + t 2^{-Ml} \partial_{1}\Psi_{2}(\kappa, \frac{\tilde{\kappa}}{\kappa}) -t2^{-Ml} \frac{\tilde{\kappa}}{\kappa^{2}} \partial_{2} \Psi_{2} (\kappa, \frac{\tilde{\kappa}}{\kappa}) \biggl)\eta_{1} \nonumber\\
 &+ 2^{\frac{j+Ml}{2}}  \biggl(y_{2} + t 2^{-Ml} \frac{1}{\kappa} \partial_{2}\Psi_{2}(\kappa,\frac{\tilde{\kappa}}{\kappa}) \biggl) \eta_{2} + 2^{j}\biggl(y_{3}+ \kappa y_{1} + \tilde{\kappa}y_{2} + t\Psi_{1}(\kappa) + t 2^{-Ml}\Psi_{2}(\kappa, \frac{\tilde{\kappa}}{\kappa}) \biggl)\eta_{3} \nonumber\\
 &+ t\frac{\eta_{1}^{2}}{ 2\eta_{3}}  \Psi_{1}^{\prime \prime}  (\kappa) + t\frac{\eta_{2}^{2}}{2\eta_{3}}   \frac{1}{\kappa^{2}} \partial_{22}\Psi_{2}(\kappa, \frac{\tilde{\kappa}}{\kappa})     + R_{j,l}(\eta, t, \kappa,\tilde{\kappa}), \nonumber
\end{align}
where all the remainder terms are included in $R_{j,l}$.
Since
\[ \biggl|\nabla_{\eta} \biggl[t\frac{\eta_{1}^{2}}{ 2\eta_{3}}  \Psi_{1}^{\prime \prime}  (\kappa) + t\frac{\eta_{2}^{2}}{2\eta_{3}}   \frac{1}{\kappa^{2}} \partial_{22}\Psi_{2}(\kappa, \frac{\tilde{\kappa}}{\kappa})     + R_{j,l}(\eta, t, \kappa,\tilde{\kappa}) \biggl]\biggl| \lesssim 1.\]
Integration by parts implies that
\begin{align}\label{kernel3}
  &|\mathcal{K}_{2,\kappa,\tilde{\kappa}}(y,t)|  \lesssim   2^{2j + \frac{Ml}{2}}  \nonumber\\
  &  \times (1+2^{\frac{j}{2}} |y_{1} +c_{1,t}(\kappa, \tilde{\kappa})| +2^{\frac{j+Ml}{2}} |y_{2} + c_{2,t}(\kappa, \tilde{\kappa})| +2^{j}|y_{3} + c_{3,t}(y_{1},y_{2},\kappa, \tilde{\kappa})|)^{-4},
  \end{align}
 where
  $$c_{1,t}(\kappa, \tilde{\kappa})= t \Psi_{1}^{\prime}(\kappa) + t 2^{-Ml} \partial_{1}\Psi_{2}(\kappa, \frac{\tilde{\kappa}}{\kappa}) -t2^{-Ml} \frac{\tilde{\kappa}}{\kappa^{2}} \partial_{2} \Psi_{2} (\kappa, \frac{\tilde{\kappa}}{\kappa})  ,$$
  $$c_{2,t}(\kappa,\tilde{\kappa})= t 2^{-Ml} \frac{1}{\kappa} \partial_{2}\Psi_{2}(\kappa,\frac{\tilde{\kappa}}{\kappa}),$$
$$c_{3,t}(y_{1},y_{2},\kappa, \tilde{\kappa})= \kappa y_{1} + \tilde{\kappa}y_{2} + t\Psi_{1}(\kappa) + t 2^{-Ml}\Psi_{2}(\kappa, \frac{\tilde{\kappa}}{\kappa}) .$$
Then we have
\[\|\mathcal{K}_{2,\kappa,\tilde{\kappa}}(\cdot,t)\|_{L^{1}(\mathbb{R}^{3})} \lesssim 1. \]
By (\ref{decompose2}) and  the facts that the number of elements in  $ \mathbb{I}_{1} $ is bounded by $2^{\frac{j}{2}}$,  the number of  elements in $ \mathbb{I}_{2} $ is bounded by $2^{\frac{j-Ml}{2}}$, we complete the proof of (\ref{L1E2}).

\textbf{(2)} If $t \lesssim 2^{-j}$, then
\[|\mathcal{K}_{2}(y,t)| \lesssim 2^{3j} \lesssim 2^{3j}(1+2^{j}t)^{-1},\]
this yields  (\ref{decay2}) since $(1+2^{j}t)^{-\frac{1}{2}} \le (1+2^{j-Ml}t)^{-\frac{1}{2}} $.

We consider the case $t \gg  2^{-j}$ in what follows, and rewrite
 \begin{equation}
\mathcal{K}_{2}(y,t) =2^{3j} \int_{\mathbb{R}}e^{i2^{j}\xi^{\prime}_{3}y_{3}} \int_{\mathbb{R}^{2}} e^{i 2^{j} t\xi^{\prime}_{3} \bigl[\frac{y_{1}}{t} \cdot \frac{\xi^{\prime}_{1}}{\xi^{\prime}_{3}}+\frac{y_{2}}{t} \cdot \frac{\xi_{2}^{\prime}}{\xi_{3}^{\prime}} + \bigl( \Psi_{1}(\frac{\xi^{\prime}_{1}}{\xi^{\prime}_{3}})+ 2^{-Ml} \Psi_{2} (\frac{\xi^{\prime}_{1}}{\xi^{\prime}_{3}},\frac{\xi^{\prime}_{2}}{\xi^{\prime}_{1}})\bigl) \bigl] }   b_{j,l}(\xi^{\prime},t) d\xi_{1}^{\prime}d\xi_{2}^{\prime}d\xi_{3}^{\prime}. \nonumber
\end{equation}
 We change variables $\xi^{\prime}_{1} \rightarrow \eta_{1}\eta_{3}$, $\xi^{\prime}_{2} \rightarrow \eta_{2}\eta_{3}$, $\xi_{3}^{\prime} \rightarrow \eta_{3}$, then $\mathcal{K}_{2}(y,t) $ equals
 \begin{align}\label{scaling2}
2^{3j} \int_{\mathbb{R}}e^{i2^{j}\eta_{3}y_{3}} \int_{\mathbb{R}} \int_{\mathbb{R}} e^{i 2^{j} t\eta_{3}   \bigl[\frac{y_{1}}{t} \cdot \eta_{1}+\frac{y_{2}}{t} \cdot \eta_{2} +  \Psi_{1}(\eta_{1})+ 2^{-Ml} \widetilde{\Psi}_{2} (\eta_{1},\eta_{2})  \bigl] }   b_{j,l}(\tilde{\eta},t)\eta^{2}_{3} d\eta_{1} d\eta_{2} d\eta_{3} ,
\end{align}
where $\tilde{\eta}=(\eta_{1}\eta_{3},\eta_{2}\eta_{3},\eta_{3})$,
 \[\widetilde{\Psi}_{2} (\eta_{1},\eta_{2})  = c_{d,M}  \bigl(-\eta_{1}\bigl)^{d/(d-1)} \bigl(-\frac{\eta_{2}}{\eta_{1}}  \bigl)^{M/(M-1)}  + 2^{- l} R_{l}(\eta_{1}, \frac{\eta_{2}}{ \eta_{1}}   ). \]
We will consider  the case $2^{-j} \ll t \lesssim 2^{-j+Ml}$ and $t \gg 2^{-j+Ml}$, respectively.

When $2^{-j} \ll t \lesssim 2^{-j+Ml}$, we consider the integral
\[I_{2}:= \int_{\mathbb{R}} e^{i 2^{j} t\eta_{3}   \bigl[\frac{y_{1}}{t} \cdot \eta_{1}+\frac{y_{2}}{t} \cdot \eta_{2} + \bigl( \Psi_{1}(\eta_{1})+ 2^{-Ml} \widetilde{\Psi}_{2} (\eta_{1},\eta_{2})\bigl) \bigl] }   b_{j,l}(\tilde{\eta},t) d\eta_{1}.\]
We notice that $|\eta_{i}|\sim 1$ for $i = 1,2,3$, and  $l \gg 1$, then
\[\biggl| \partial^{2}_{\eta_{1}}\bigl[\frac{y_{1}}{t} \cdot \eta_{1}+\frac{y_{2}}{t} \cdot \eta_{2} +  \Psi_{1}(\eta_{1})+ 2^{-Ml} \widetilde{\Psi}_{2} (\eta_{1},\eta_{2}) \bigl]\biggl| = \biggl|\Psi^{\prime \prime}_{1}(\eta_{1})  + 2^{-Ml} \partial^{2}_{\eta_{1}}\widetilde{\Psi}_{2} (\eta_{1},\eta_{2}) \biggl| \gtrsim |\Psi^{\prime \prime}_{1}(\eta_{1})| \gtrsim 1. \]
Then we can apply von der Corupt's lemma to obtain that
\begin{equation}\label{osc2}
|I_{2}| \lesssim (2^{j}t)^{-\frac{1}{2}}.
\end{equation}
Inequlities (\ref{scaling2}) and (\ref{osc2})   yield
\[|\mathcal{K}_{2}(y,t)| \lesssim 2^{3j}(1+2^{j}t)^{-\frac{1}{2}} \lesssim 2^{3j}(1+2^{j}t)^{-\frac{1}{2}} (1+2^{j-Ml}t)^{-\frac{1}{2}},\]
and (\ref{decay2}) follows.

When $t \gg 2^{-j+Ml}$, we change the order of integration and rewrite $\mathcal{K}_{2}(y,t)$ as
 \begin{align}
2^{3j} \int_{\mathbb{R}}e^{i2^{j}\eta_{3}y_{3}} \int_{\mathbb{R}}  e^{i 2^{j} t\eta_{3}   \bigl[\frac{y_{1}}{t} \cdot \eta_{1} + \Psi_{1}(\eta_{1})  \bigl] } \int_{\mathbb{R}} e^{i 2^{j-Ml} t\eta_{3}   \bigl[ \frac{2^{Ml}y_{2}}{t} \cdot \eta_{2} +  \widetilde{\Psi}_{2} (\eta_{1},\eta_{2})  \bigl] }   b_{j,l}(\tilde{\eta},t) d\eta_{2} d\eta_{1}\eta^{2}_{3} d\eta_{3} .\nonumber
\end{align}
We first consider the integral
\[I_{4}:= \int_{\mathbb{R}} e^{i 2^{j-Ml} t\eta_{3}   \bigl[ \frac{2^{Ml}y_{2}}{t} \cdot \eta_{2} +  \widetilde{\Psi}_{2} (\eta_{1},\eta_{2})  \bigl] }   b_{j,l}(\tilde{\eta},t) d\eta_{2}.\]
We denote $y^{\prime}_{2} = \frac{2^{Ml}y_{2}}{t}$ and
\[\psi(y^{\prime}_{2},\eta_{1},\eta_{2})= y^{\prime}_{2}\eta_{2} + \widetilde{\Psi}_{2}(\eta_{1},\eta_{2}). \]
Since
\[\partial_{\eta_{2}}\psi(y^{\prime}_{2},\eta_{1},\eta_{2})= y^{\prime}_{2}  + \partial_{\eta_{2}}\widetilde{\Psi}_{2}(\eta_{1},\eta_{2}), \]
and $|\partial_{\eta_{2}}\widetilde{\Psi}_{2}(\eta_{1},\eta_{2})| \sim 1$, the phase function may have critical point only if $|y^{\prime}_{2}|\sim 1$. Hence, we will consider two cases (i) $|y^{\prime}_{2}| \ll 1$ or $|y^{\prime}_{2}| \gg 1$;  (ii) $|y^{\prime}_{2}| \sim 1$, respectively.

\textbf{(i)}   $|y^{\prime}_{2}| \ll 1$ or $|y^{\prime}_{2}| \gg 1$. It follows that
\[|\partial_{\eta_{2}}\psi(y^{\prime}_{2},\eta_{1},\eta_{2})| \gtrsim 1. \]
We first apply integration by parts  in $I_{4}$,  then apply von der Corupt's lemma with respect to the variable $\eta_{1}$ for $\mathcal{K}_{2}(y,t)$. It can be obtained that
\[|\mathcal{K}_{2}(y,t)| \lesssim 2^{3j}(2^{j-Ml}t)^{-1} (2^{j}t)^{-\frac{1}{2}},\]
which implies (\ref{decay2}) as $2^{j-Ml}t\ge 1$.

\textbf{(ii)} $|y^{\prime}_{2}| \sim 1$. Since
\[\partial^{2}_{\eta_{2}}\psi(y^{\prime}_{2},\eta_{1},\eta_{2})  = \partial^{2}_{\eta_{2}}\widetilde{\Psi}_{2}(\eta_{1},\eta_{2}) \neq 0, \]
by implicit function theorem, there exists a unique solution $\eta^{c}_{2}(y^{\prime}_{2},\eta_{1})$ to the equation
\[\partial_{\eta_{2}}\psi(y^{\prime}_{2},\eta_{1},\eta_{2}) = 0.\]
By the method of stationary phase,
\[I_{4}= (2^{j-Ml}t)^{-\frac{1}{2}}e^{i2^{j-Ml}t\eta_{3} \psi\bigl(y^{\prime}_{2},\eta_{1},\eta^{c}_{2}(y^{\prime}_{1},\eta_{1})\bigl)}\widetilde{b}_{j,l,y_{2}}(\tilde{\eta},t)+ \widetilde{r}_{j,l,y_{2}}(\tilde{\eta},t),\]
where $\widetilde{b}_{j,l,y_{2}}(\tilde{\eta},t)$ satisfies
\[|D^{\alpha}_{\eta}\widetilde{b}_{j,l,y_{2}}(\tilde{\eta},t)| \lesssim 1\]
for each multi-index $\alpha$,
and $\widetilde{r}_{j,l,y_{2}}(\tilde{\eta},t)$ satisfies
\[|D^{\alpha}_{\eta}\widetilde{r}_{j,l,y_{2}}(\tilde{\eta},t)| \lesssim (2^{j-Ml}t)^{-N}, \quad \forall N \in \mathbb{N}^{+}.\]
Notice that the estimate for the remainder term
\[  \int_{\mathbb{R}}  e^{i 2^{j} t\eta_{3}   \bigl[\frac{y_{1}}{t} \cdot \eta_{1} + \Psi_{1}(\eta_{1})   \bigl] }\widetilde{r}_{j,l,y_{2}}(\tilde{\eta},t)   d\eta_{1}\]
is the same as  in case (i). Next we consider only the estimate for
\[I_{5} := (2^{j-Ml}t)^{-\frac{1}{2}} \int_{\mathbb{R}}  e^{i 2^{j} t\eta_{3}   \bigl[\frac{y_{1}}{t} \cdot \eta_{1} + \Psi_{1}(\eta_{1}) +2^{-Ml}  \psi\bigl(y^{\prime}_{2},\eta_{1},\eta^{c}_{2}(y^{\prime}_{2},\eta_{1})\bigl)  \bigl] }\widetilde{b}_{j,l,y_{2}}(\tilde{\eta},t)   d\eta_{1}. \]

Before we can apply von der Corupt's lemma to dominate  the term $I_{5}$, we need to show \[|\partial_{\eta_{1}}^{2}\psi\bigl(y^{\prime}_{2},\eta_{1},\eta^{c}_{2}(y^{\prime}_{2},\eta_{1})\bigl)| \lesssim 1,\]
so that the term $2^{-Ml}\psi\bigl(y^{\prime}_{2},\eta_{1},\eta^{c}_{2}(y^{\prime}_{2},\eta_{1})\bigl)$ can be regarded  as a small perturbation. Indeed,
\begin{align}
\partial_{\eta_{1}}  \psi\bigl(y^{\prime}_{2},\eta_{1},\eta^{c}_{2}(y^{\prime}_{2},\eta_{1})\bigl)&= y^{\prime}_{2} \partial_{\eta_{1}}\eta^{c}_{2}(y^{\prime}_{2},\eta_{1})+ (\partial_{1}\widetilde{\Psi}_{2})(\eta_{1},\eta^{c}_{2}(y^{\prime}_{2},\eta_{1})) +   \partial_{\eta_{1}}\eta^{c}_{2}(y^{\prime}_{2},\eta_{1}) (\partial_{2}\widetilde{\Psi}_{2})(\eta_{1},\eta^{c}_{2}(y^{\prime}_{2},\eta_{1})) \nonumber\\
&= (\partial_{1}\widetilde{\Psi}_{2})(\eta_{1},\eta^{c}_{2}(y^{\prime}_{2},\eta_{1})), \nonumber
\end{align}
where we have applied the equation
\[y^{\prime}_{2}  +    (\partial_{2}\widetilde{\Psi}_{2})(\eta_{1},\eta^{c}_{2}(y^{\prime}_{2},\eta_{1}))= 0. \]
This  equation also implies that
\[   (\partial_{21}\widetilde{\Psi}_{2})(\eta_{1},\eta^{c}_{2}(y^{\prime}_{2},\eta_{1}))+ \partial_{\eta_{1}}\eta^{c}_{2}(y^{\prime}_{2},\eta_{1})  (\partial_{22}\widetilde{\Psi}_{2})(\eta_{1},\eta^{c}_{2}(y^{\prime}_{2},\eta_{1}))= 0.\]
Since $(\partial_{22}\widetilde{\Psi}_{2})(\eta_{1},\eta^{c}_{2}(y^{\prime}_{2},\eta_{1})) \neq 0$, we have
\[|\partial_{\eta_{1}}\eta^{c}_{2}(y^{\prime}_{2},\eta_{1})|   =\biggl| \frac{ (\partial_{21}\widetilde{\Psi}_{2})\bigl(\eta_{1},\eta^{c}_{2}(y^{\prime}_{2},\eta_{1})\bigl)}
{(\partial_{22}\widetilde{\Psi}_{2})\bigl(\eta_{1},\eta^{c}_{2}(y^{\prime}_{2},\eta_{1})\bigl)} \biggl| \lesssim 1,\]
where the implied constant does not depend on $l$.
It follows that
\begin{align}\label{phaselowerb}
&|\partial^{2}_{\eta_{1}}  \psi\bigl(y^{\prime}_{2},\eta_{1},\eta^{c}_{2}(y^{\prime}_{2},\eta_{1})\bigl)| \nonumber\\
&= |(\partial_{11}\widetilde{\Psi}_{2})(\eta_{1},\eta^{c}_{2}(y^{\prime}_{2},\eta_{1})) + \partial_{\eta_{1}}\eta^{c}_{2}(y^{\prime}_{2},\eta_{1}) (\partial_{12}\widetilde{\Psi}_{2})(\eta_{1},\eta^{c}_{2}(y^{\prime}_{2},\eta_{1}))| \lesssim 1.
\end{align}

Now we are ready to estimate $I_{5}$. By (\ref{phaselowerb}) and the fact that $l$ is sufficiently large,
\[ \bigl| \partial^{2}_{\eta_{1}}\bigl[\frac{y_{1}}{t} \cdot \eta_{1} + \Psi_{1}(\eta_{1}) +2^{-Ml}  \psi\bigl(y^{\prime}_{2},\eta_{1},\eta^{c}_{2}(y^{\prime}_{2},\eta_{1})\bigl)  \bigl] \bigl| \sim |\Psi^{\prime\prime}_{1}(\eta_{1})| \sim 1. \]
The von der Corupt's lemma implies that
\begin{equation}\label{osc3}
|I_{5}| \lesssim (2^{j}t)^{-\frac{1}{2}}  (2^{j-Ml}t)^{-\frac{1}{2}}.
\end{equation}
Then we finish the proof of (\ref{decay2})  by (\ref{scaling2}) and (\ref{osc3}).
\end{proof}

\textbf{
Proof of Theorem   \ref{kernel estimate3}:}
\textbf{(I)}  For  $j \le Ml$, $k\in \mathbb{Z}\bigcap [Ml-j,(M-1)l]$, and  $j > Ml$, $k\in \biggl(\mathbb{Z}\bigcap (N_{1},(M-1)l]\biggl)\bigcup\{0\}$,
we change variables $\xi_{1} \rightarrow 2^{j} \xi^{\prime}_{1}$, $\xi_{2} \rightarrow 2^{j+k} \xi^{\prime}_{2}$, $\xi_{3} \rightarrow 2^{j} \xi^{\prime}_{3}$, then
\begin{align}
K_{j,l,k}(y,t)&=  2^{3j+k} \int_{\mathbb{R}^{3}} e^{i 2^{j}\bigl[y_{1}  \xi_{1}^{\prime} + 2^{k}y_{2}\xi_{2}^{\prime}+  y_{3} \xi_{3}^{\prime} -tc_{d}\xi^{\prime}_{3}(-\frac{\xi^{\prime}_{1}}{\xi^{\prime}_{3}})^{d/(d-1)}\bigl]} \nonumber\\
 &\quad \times a_{j,l,k }( \xi^{\prime}_{j,k},t)\chi_{1}(|\xi^{\prime}_{1}|  )  \chi_{1} \biggl(\frac{\xi^{\prime}_{1}}{ \xi^{\prime}_{3} }\biggl)  \chi_{0}(2^{k-(M-1)l }|\xi^{\prime}_{2}|)  d\xi^{\prime}.
\end{align}
The symbol $a_{j,l,k }( \xi^{\prime}_{j,k},t)$ satisfies (\ref{symbolb1}) when $j \le Ml$, $k\in \mathbb{Z}\bigcap (Ml-j,(M-1)l]$, or when  $j > Ml$, $k\in \biggl(\mathbb{Z}\bigcap (N_{1},(M-1)l]\biggl)\bigcup\{0\}$. When $j \le Ml$ and $k=Ml-j$, $a_{j,l,k }( \xi^{\prime}_{j,k},t)$ satisfies (\ref{symbolb4}).   Then we denote
\[b_{j,l,k}(\xi^{\prime},t)= 2^{(j+k-Ml)N}a_{j,l,k }( \xi^{\prime}_{j,k},t)\chi_{1}(|\xi^{\prime}_{1}|  )  \chi_{1} \biggl(\frac{\xi^{\prime}_{1}}{ \xi^{\prime}_{3} }\biggl)  \chi_{0}(2^{k-(M-1)l }|\xi^{\prime}_{2}|),\]
$b_{j,l,k}$ satisfies the condition in Lemma \ref{kernel estimate4}, i.e.,
\[|D_{\xi^{\prime}}^{\alpha}b_{j,l,k}(\xi^{\prime},t)| \lesssim 1\]
for each multi-index $\alpha$. In addition,
\begin{align}
\widetilde{K_{j,l,k}}(y,t,t^{\prime})&=  2^{3j+k} \int_{\mathbb{R}^{3}} e^{i 2^{j}\bigl[y_{1}  \xi_{1}^{\prime} + 2^{k}y_{2}\xi_{2}^{\prime}+  y_{3} \xi_{3}^{\prime} -(t-t^{\prime})c_{d}\xi^{\prime}_{3}(-\frac{\xi^{\prime}_{1}}{\xi^{\prime}_{3}})^{d/(d-1)}\bigl]} \nonumber\\
 &\quad \times a_{j,l,k }( \xi^{\prime}_{j,k},t)\overline{a_{j,l,k }( \xi^{\prime}_{j,k},t^{\prime}) }\chi^{2}_{1}(|\xi^{\prime}_{1} | )  \chi^{2}_{1} \biggl(\frac{\xi^{\prime}_{1}}{ \xi^{\prime}_{3} }\biggl)  \chi^{2}_{0}(2^{k-(M-1)l }|\xi^{\prime}_{2}|)  d\xi^{\prime}.
\end{align}
We denote
\[\widetilde{b_{j,l,k}}(\xi,t,t^{\prime})= 2^{2(j+k-Ml)N}a_{j,l,k }( \xi^{\prime}_{j,k},t)\overline{a_{j,l,k }( \xi^{\prime}_{j,k},t^{\prime}) }\chi^{2}_{1}(|\xi^{\prime}_{1}|  )  \chi^{2}_{1} \biggl(\frac{\xi^{\prime}_{1}}{ \xi^{\prime}_{3} }\biggl)  \chi^{2}_{0}(2^{k-(M-1)l }|\xi^{\prime}_{2}|),\]
then $\widetilde{b_{j,l,k}}$ satisfies the requirement in Lemma \ref{kernel estimate4}, i.e.,
\[|D_{\xi^{\prime}}^{\alpha}\widetilde{b_{j,l,k}}(\xi^{\prime},t,t^{\prime})| \lesssim 1\]
for each multi-index $\alpha$.

$\bullet$ \textbf{Proof of (\ref{kerneli1}).}  For fixed  $t \in [1/2,4]$, we can apply (\ref{L1E1}) in Lemma \ref{kernel estimate4} to obtain that
\[\|K_{j,l,k}(\cdot,t)\|_{L_{y}^{1}(\mathbb{R}^{3})}   \lesssim 2^{\frac{j}{2}} \cdot 2^{-(j+k-Ml)N}, \]
where the implied constant does not depend on $t$.

$\bullet$  \textbf{Proof of (\ref{kerneli2}).}  We apply (\ref{decay1}) in Lemma \ref{kernel estimate4} for $t\in [1/2,4]$ to obtain that
  \begin{align}
 |K_{j,l,k}(y,t)| \lesssim 2^{-(j+k-Ml)N} 2^{3j+k} (1+2^{j}t)^{-\frac{1}{2}} \lesssim 2^{-(j+k-Ml)N} 2^{\frac{5j}{2}+k}.
 \end{align}

$\bullet$  \textbf{Proof of (\ref{kerneli3}).}  We apply (\ref{decay1})  in Lemma \ref{kernel estimate4}  to get
 \begin{align}
&|\widetilde{K_{j,l,k}}(y,t,t^{\prime})| \lesssim  C_{N} 2^{-2(j+k-Ml)N} 2^{3j+k} (1+2^{j}|t-t^{\prime}|)^{-\frac{1}{2}}.
\end{align}

\textbf{(II)} When $j>Ml$ and $k=1$, we change variables $\xi \rightarrow 2^{j}\xi^{\prime}$, then
 \begin{align}
 K_{j,l,1} (y,t) &=2^{3j} \int_{\mathbb{R}^{3}} e^{i 2^{j} \bigl[y \cdot \xi^{\prime} - t\xi^{\prime}_{3}\bigl( \Psi_{1}(\frac{\xi^{\prime}_{1}}{\xi^{\prime}_{3}})+ 2^{-Ml} \Psi_{2} (\frac{\xi^{\prime}_{1}}{\xi^{\prime}_{3}},\frac{\xi^{\prime}_{2}}{\xi^{\prime}_{1}})\bigl) \bigl] } \nonumber\\  &\quad \times \widetilde{a_{j,l}}(2^{j}\xi^{\prime},t)   \chi_{1}  (|\xi^{\prime}_{1}|)  \chi_{1} \biggl(\frac{\xi^{\prime}_{1}}{ \xi^{\prime}_{3} }\biggl)  \chi_{0}(2^{-(M-1)l }|\xi^{\prime}_{2}|) d\xi^{\prime},
\end{align}
 where
\[\widetilde{a_{j,l}}(2^{j}\xi^{\prime},t)= \sum_{-N_{0} \le k \le N_{1}} \biggl( \widetilde{a_{j,l,k}}(2^{j}\xi^{\prime},t) +e^{-i t2^{j-Ml} \xi^{\prime}_{3}   \Psi_{2} (\frac{\xi^{\prime}_{1}}{\xi^{\prime}_{3}},\frac{\xi^{\prime}_{2}}{\xi^{\prime}_{1}})  } \widetilde{r_{j,l,k}}(2^{j}\xi^{\prime},t) \biggl),\]
and $\widetilde{a_{j,l,k}}$ satisfies (\ref{symbolb2}), $\widetilde{r_{j,l,k}}$ satisfies (\ref{symbolb2+}). Moreover, according to  (\ref{symbolb2+}), for fixed  multi-index $\alpha$,
\begin{align}
&\biggl|D^{\alpha}_{\xi^{\prime}}\biggl( e^{-i t2^{j-Ml} \xi^{\prime}_{3}   \Psi_{2} (\frac{\xi^{\prime}_{1}}{\xi^{\prime}_{3}},\frac{\xi^{\prime}_{2}}{\xi^{\prime}_{1}})  } \widetilde{r_{j,l,k}}(2^{j}\xi^{\prime},t)  \biggl) \biggl| \lesssim 2^{-(j-Ml)N}, \quad \forall N \in \mathbb{N}^{+}. \nonumber
\end{align}
Hence, we denote
\[b_{j,l}(\xi^{\prime},t) = 2^{\frac{j-Ml}{2}} \widetilde{a_{j,l}}(2^{j}\xi^{\prime},t)     \chi_{1}  (|\xi^{\prime}_{1}|)  \chi_{1} \biggl(\frac{\xi^{\prime}_{1}}{ \xi^{\prime}_{3} }\biggl)  \chi_{0}(2^{-(M-1)l }|\xi^{\prime}_{2}|),\]
 $ b_{j,l}(\xi^{\prime},t)$ satisfies the condition in Lemma \ref{kernel estimate1}, i.e.,
\[|D_{\xi^{\prime}}^{\alpha}[b_{j,l}(\xi^{\prime},t)]| \lesssim 1\]
for each multi-index $\alpha$.
Moreover,
 \begin{align}
\widetilde{K_{j,l,1}}(y,t,t^{\prime}) &=2^{3j} \int_{\mathbb{R}^{3}} e^{i 2^{j} \bigl[y \cdot \xi^{\prime} - (t-t^{\prime})\xi^{\prime}_{3}\bigl( \Psi_{1}(\frac{\xi^{\prime}_{1}}{\xi^{\prime}_{3}})+ 2^{-Ml} \Psi_{2} (\frac{\xi^{\prime}_{1}}{\xi^{\prime}_{3}},\frac{\xi^{\prime}_{2}}{\xi^{\prime}_{1}})\bigl) \bigl] } \nonumber\\  &\quad \times \widetilde{a_{j,l}}(2^{j}\xi^{\prime},t)\overline{\widetilde{a_{j,l}}(2^{j}\xi^{\prime},t)} \chi^{2}_{1}  (|\xi^{\prime}_{1}|)  \chi^{2}_{1} \biggl(\frac{\xi^{\prime}_{1}}{ \xi^{\prime}_{3} }\biggl)  \chi^{2}_{0}(2^{-(M-1)l }|\xi^{\prime}_{2}|) d\xi^{\prime}.
\end{align}
Let
\[\widetilde{b_{j,l} }(\xi,t,t^{\prime})= 2^{j-Ml}
\widetilde{a_{j,l}}(2^{j}\xi^{\prime},t)\overline{\widetilde{a_{j,l}}(2^{j}\xi^{\prime},t)} \chi^{2}_{1}  (|\xi^{\prime}_{1}|)  \chi^{2}_{1} \biggl(\frac{\xi^{\prime}_{1}}{ \xi^{\prime}_{3} }\biggl)  \chi^{2}_{0}(2^{-(M-1)l }|\xi^{\prime}_{2}|) .\]
Then
\[| D_{\xi^{\prime}}^{\alpha}\widetilde{b_{j,l}}(\xi^{\prime},t)| \lesssim 1\]
for each multi-index $\alpha$.

$\bullet$  \textbf{Proof of (\ref{kernelii1}).} For each $t \in [1/2,4]$, we apply  (\ref{L1E2}) in Lemma \ref{kernel estimate1} to obtain that
  \[\|K_{j,l,1}(\cdot,t)\|_{L_{y}^{1}(\mathbb{R}^{3})} \lesssim 2^{-\frac{j-Ml}{2}} 2^{j-\frac{Ml}{2}}  \lesssim 2^{\frac{j}{2}}, \]
where the implied constant does not depend on $t$.

$\bullet$  \textbf{Proof of (\ref{kernelii2}).} For each $t \in [1/2,4]$, we apply (\ref{decay2}) in Lemma \ref{kernel estimate1} to get
\[|K_{j,l,1}(y,t)| \lesssim 2^{-\frac{j-Ml}{2}} \cdot 2^{3j}(1+2^{j}t)^{-\frac{1}{2}}(1+2^{j-Ml}t)^{-\frac{1}{2}} \lesssim 2^{\frac{3j}{2} +Ml}.\]

$\bullet$  \textbf{Proof of (\ref{kernelii3}).} By (\ref{decay2}) in Lemma \ref{kernel estimate1}, we get
\begin{align}
|\widetilde{K_{j,l,1}}(y,t,t^{\prime})| &\lesssim 2^{-(j-Ml)}2 ^{3j}(1+2^{j}|t-t^{\prime}|)^{-\frac{1}{2}}(1+2^{j-Ml}|t-t^{\prime}|)^{-\frac{1}{2}} \nonumber\\
&\lesssim 2^{2j+Ml}(1+2^{j}|t-t^{\prime}|)^{-\frac{1}{2}}(1+2^{j-Ml}|t-t^{\prime}|)^{-\frac{1}{2}}. \nonumber
\end{align}

Now we  have completed the proof of Theorem \ref{kernel estimate3}.

\section{Proofs of Theorem \ref{mainth1+} and Theorem \ref{mainth2+}}\label{proofofmainth}

We only show the details for proofs of Theorem \ref{mainth1+} and Theorem \ref{mainth2+} when  $\lambda < \infty$, the case $\lambda = \infty$ can be proved similarly.

We first perform a reduction. By a dyadic decomposition and scaling argument,
\begin{equation}\label{dyadicdec1}
\|\mathcal{M}_{loc,\lambda}\|_{L^{p}  \rightarrow L^{q}  } \le \sum_{k \ge 1} 2^{\frac{(m+2)k}{p}-\frac{(m+2)k}{q}-2k}\|\mathcal{M}_{loc,\lambda,k}\|_{L^{p}  \rightarrow L^{q} },
\end{equation}
where the local maximal operator $\mathcal{M}_{loc,\lambda,k}$ is defined by
\[\mathcal{M}_{loc,\lambda,k}f(y)=  \sup_{t\in [1,2]}\biggl|\widetilde{A^{\lambda}_{t,k}}f(y)\biggl| \]
and the averaging operator
\begin{align}\label{h0}
 \widetilde{A^{\lambda}_{t,k}}f(y)
&=  \int_{\mathbb{R}^{2}}f\bigl(y-t(x_{1}, x_{2}, \Phi(x_{1},x_{2})+c2^{mk} )\bigl) \nonumber\\
&\quad \times \eta_{0}(\frac{x_{2}-\lambda x_{1}}{\epsilon x_{1}})\widetilde{\eta}(x_{1},x_{2}) \rho_{0}(2^{-k}x_{1},2^{-k}x_{2}) dx_{1}dx_{2}.
\end{align}
Here, $\eta_{0}$, $\tilde{\eta}$ and $\rho_{0}$ are smooth cut-off functions such that $\mathrm{supp}$ $\eta_{0} \subset B(0,1)$, $\mathrm{supp}$ $\rho_{0}\subset B(0,1)$, and $\mathrm{supp}$ $\tilde{\eta} \subset \{x \in \mathbb{R}^{2}:  |x| \sim 1\}$.

\textbf{Proof of Theorem \ref{mainth1+}.}
  By (\ref{dyadicdec1}), Theorem \ref{mainth1+} follows if we can prove the following estimates. When $c=0$,
\begin{equation}\label{lemma1eq1}
 \|\mathcal{M}_{loc,\lambda, k}f \|_{L^{p}  \rightarrow L^{q} } \lesssim 1
\end{equation}
holds for $(\frac{1}{p},\frac{1}{q}) \in \Delta_{0}$ and  $\frac{n_{\lambda}+1}{p}-\frac{n_{\lambda}+1}{q}-1<0$.
When $c \neq 0$,
\begin{equation}\label{lemma1eq2}
  \|\mathcal{M}_{loc,\lambda, k}f \|_{L^{p}  \rightarrow L^{q} } \lesssim 2^{\frac{mk}{q}}
\end{equation}
provided that $(\frac{1}{p},\frac{1}{q}) \in \Delta_{0}$ and  $\frac{n_{\lambda}+1}{p}-\frac{1}{q}-1<0$.
 Next we show the proof of (\ref{lemma1eq1}) and (\ref{lemma1eq2}).

 After some linear transformations (which do not change the $L^{p}  \rightarrow L^{q} $ norm of the maximal operator up to constants), we are reduced to estimating the maximal operator defined by
\[\sup_{t\in [1,2]}\biggl|\int_{\mathbb{R}^{2}}f\biggl(y-t\bigl(x_{1}, x_{2},  \Phi(x_{1},x_{2}+\lambda x_{1})+c2^{mk} \bigl)\biggl) \eta_{0}(\frac{x_{2}}{\epsilon x_{1}})\widetilde{\eta}(x_{1},x_{2}+\lambda x_{1})\rho_{k}(x_{1}, x_{2})dx_{1}dx_{2}\biggl|,\]
where we denote $\rho_{k}(x_{1},x_{2})= \rho_{0}(2^{-k}x_{1}, 2^{-k}(x_{2}+\lambda x_{1}))$.
Recall that
\[\Phi(x_1,x_2)=(x_{2}-\lambda x_{1})^{n_{\lambda}}P(x_{1},x_{2}),\]
where $P$ does not vanish along $L_{\lambda}\backslash \{(0,0)\}$. By Taylor expansion of $P(x_{1},x_{2}+\lambda x_{1})$ at $x_{2} =0$,
\begin{align}
\Phi(x_{1},x_{2}+\lambda x_{1})&= c_{\Phi}x^{m-n_{\lambda}}_{1}x^{n_{\lambda}}_{2} + x^{n_{\lambda}+1}_{2} U(x_{1},x_{2}),
\end{align}
where we have put all the remainder terms into $U(x_{1},x_{2})$.

By an isometric transformation, we obtain
\begin{align}
 \|\mathcal{M}_{loc,\lambda, k}f \|_{L^{p}  \rightarrow L^{q} }\le   \sum_{l \ge l_{0}}  2^{\frac{(n_{\lambda} +1)l}{p}-\frac{(n_{\lambda} +1)l}{q}-l}  \|\mathcal{M}_{loc,\lambda, k,l}f \|_{L^{p}  \rightarrow L^{q} }, \nonumber
\end{align}
where $l_{0} \ge \log\frac{1}{\epsilon}$, $\mathcal{M}_{loc,\lambda,k,l}$ denotes the local maximal operator along the hypersurface
\begin{equation}\label{hypersurface1}
S^{\lambda}_{k,l}= \biggl\{ \biggl(x_{1},x_{2}, c_{\Phi}x^{m-n_{\lambda}}_{1}x^{n_{\lambda}}_{2} + 2^{-l}x^{n_{\lambda}+1}_{2} U(x_{1},2^{-l}x_{2}) +2^{mk+ n_{\lambda}l}c \biggl): |x_{1}| \sim |x_{2}| \sim 1 \biggl\}.
\end{equation}
Notice that $0<n_{\lambda}<m$,  the surface $S^{\lambda}_{k,l}$ has non-vanishing Gaussian curvature, then
\begin{align}\label{nonvanishcurvature}
 \|\mathcal{M}_{loc,\lambda,k,l} \|_{L^{p} \rightarrow L^{q} } \lesssim c2^{\frac{mk }{q}} 2^{\frac{n_{\lambda}l}{q}} +1
\end{align}
provided that $(\frac{1}{p}, \frac{1}{q}) \in \Delta_{0}$. Inequality (\ref{nonvanishcurvature}) can be proved by standard arguments.  For completeness, we give a simple explanation for the proof of (\ref{nonvanishcurvature}) in the appendix, one can see Subsection \ref{exmplain} below.

 Now let's proceed to the proof of (\ref{lemma1eq1}) and (\ref{lemma1eq2}).   Inequality (\ref{nonvanishcurvature}) yields
\begin{equation}\label{lemma1eq3}
 \|\mathcal{M}_{loc,\lambda,k} \|_{L^{p}  \rightarrow L^{q} } \le   \sum_{l \ge l_0}  2^{\frac{(n_{\lambda} +1)l}{p}-\frac{(n_{\lambda} +1)l}{q}-l} (c2^{\frac{mk }{q}} 2^{\frac{n_{\lambda}l}{q}} +1).
\end{equation}
It follows that when $c=0$, the right side of (\ref{lemma1eq3}) is convergent provided that $\frac{n_{\lambda}+1}{p}-\frac{n_{\lambda}+1}{q}-1<0$.
Then inequality (\ref{lemma1eq1}) holds true for $(\frac{1}{p},\frac{1}{q}) \in \Delta_{0}$ and  $\frac{n_{\lambda}+1}{p}-\frac{n_{\lambda}+1}{q}-1<0$.
When $c \neq 0$, the right side of (\ref{lemma1eq3}) can be dominated by $c2^{\frac{mk}{q}}$ if  $\frac{n_{\lambda}+1}{p}-\frac{1}{q}-1<0$. Hence, there holds inequality (\ref{lemma1eq2})
provided that $(\frac{1}{p},\frac{1}{q}) \in \Delta_{0}$ and  $\frac{n_{\lambda}+1}{p}-\frac{1}{q}-1<0$. Then we complete the proof of Theorem \ref{mainth1+}.

For completeness, we give some details for the proof of Remark \ref{remarkGammaj}.

\textbf{Proof of Remark \ref{remarkGammaj}.} Through a dyadic decomposition and a scaling argument, we obtain
\begin{equation}\label{dyadicdec1+}
\|\mathcal{M}_{loc,\Gamma_{j}}\|_{L^{p}  \rightarrow L^{q}  } \le \sum_{k \ge 1} 2^{\frac{(m+2)k}{p}-\frac{(m+2)k}{q}-2k}\|\mathcal{M}_{loc,\Gamma_{j},k}\|_{L^{p}  \rightarrow L^{q}  },
\end{equation}
where the local maximal operator $\mathcal{M}_{loc,\Gamma_{j},k}$ is defined by
\begin{align}
\mathcal{M}_{loc,\Gamma_{j},k}f(y)
&=  \sup_{t \in [1,2]} \biggl|\int_{\mathbb{R}^{2}}f\biggl(y -t\bigl(x_{1}, x_{2},  \Phi(x_{1},x_{2})+c2^{mk}\bigl)\biggl) \widetilde{\eta_{j}}(x_{1},x_{2}) dx_{1}dx_{2}\biggl|, \nonumber
\end{align}
$\widetilde{\eta_{j}}$ is a smooth cut-off functions such that  $\mathrm{supp}$ $\widetilde{\eta_{j}} \subset \{(x_{1},x_{2}) \in \Gamma_{j}:  |(x_{1},x_{2})| \sim 1\}$. Notice that for each $(x_{1},x_{2}) \in$ $\mathrm{supp}$ $\widetilde{\eta_{j}}$, $H\Phi(x_{1},x_{2}) \neq 0$. By the method we have applied to prove (\ref{nonvanishcurvature}), there holds
\begin{equation}\label{dyadicdec1++}
 \|\mathcal{M}_{loc,\Gamma_{j},k}\|_{L^{p}  \rightarrow L^{q} } \lesssim (c2^{\frac{mk}{q}}+1)
\end{equation}
for each $(\frac{1}{p},\frac{1}{q}) \in \Delta_{0}$. Then Remark \ref{remarkGammaj} follows from (\ref{dyadicdec1+}) and (\ref{dyadicdec1++}).

Now we are left with the proof of Theorem \ref{mainth2+}.

\textbf{Proof of Theorem \ref{mainth2+}.}
Let
\begin{equation}
\overline{S}:=\{(x_{1},x_{2},\Phi(x_{1},x_{2})+c2^{mk}): |x_{2}-\lambda x_{1}|< \epsilon |x_{1}|, 1/2 \le |(x_{1},x_{2})| \le 2 \}. \nonumber
\end{equation}
By (\ref{dyadicdec1}), Theorem \ref{mainth2+} follows if we can prove that the local maximal operator $\mathcal{M}_{loc,\lambda,k}$ related to $\overline{S}$ satifies
\begin{equation}\label{lemma2eq1}
 \|\mathcal{M}_{loc,\lambda,k}    \|_{L^{p}  \rightarrow L^{q} } \lesssim c2^{\frac{mk}{q}}+1
\end{equation}
if $(\frac{1}{p}, \frac{1}{q}) \in \Delta_{M^{\lambda}}$.

In order to apply Theorem \ref{mainth3} to obtain (\ref{lemma2eq1}), we first show how the hypersurface $\overline{S}$ is related to the hypersurface $\widetilde{S}$ defined by (\ref{tildeS}). We provide a detailed proof for the case when $x_1>0$, while the proof for $x_1<0$ follows through similar argument.
Changing variables $x_{1} \rightarrow \tau \cos \theta$, $x_{2} \rightarrow \tau  (\sin \theta + \lambda  \cos \theta )$, we have
\[\overline{S}= \bigl\{(\tau \cos \theta, \tau (\sin \theta + \lambda \cos \theta ),\tau^{m} \Phi(\cos \theta, \sin \theta + \lambda  \cos \theta )+c2^{mk}): \tau \in [1/2,2], \theta \in (-\theta_{0},\theta_{0})\bigl\},\]
 where $\theta_{0}$ is a positive constant with $\theta_{0} \ll 1$. Recall that $\Phi(x_{1},\lambda x_{1}) \neq 0$ for each $x_{1} \neq 0$,  we have $\Phi(\cos \theta, \sin \theta + \lambda  \cos \theta ) \neq 0$ for each  $\theta \in (-\theta_{0},\theta_{0})$ provided that  $\theta_{0}$ is sufficiently small.
  Without loss of generality, we may assume that $\Phi(\cos \theta, \sin \theta + \lambda  \cos \theta )>0$ for each $\theta \in (-\theta_{0},\theta_{0})$.
  Then we change variable
  \[\tau \rightarrow r/\Phi^{1/m}(\cos \theta, \sin \theta + \lambda  \cos \theta ) ,\]
we have
 \begin{equation}
 \overline{S}=\bigl\{\bigl(r z_{1}(\theta), r (z_{2}(\theta) + \lambda z_{1}(\theta) ),r^{m}+c2^{mk}\bigl): r \in [c_{1}(\theta),c_{2}(\theta)], \theta \in (-\theta_{0},\theta_{0})\bigl\}. \nonumber
 \end{equation}
 Here,
 \[z_{1}(\theta) =  \frac{\cos \theta} {\Phi^{1/m}(\cos \theta, \sin \theta + \lambda  \cos \theta )}, \quad  z_{2}(\theta)  =  \frac{\sin \theta} {\Phi^{1/m}(\cos \theta, \sin \theta + \lambda  \cos \theta )},\]
 and
 \[c_{1}(\theta)= \frac{1}{2} \cdot \Phi^{1/m}(\cos \theta, \sin \theta + \lambda  \cos \theta ), \quad  c_{2}(\theta)=2 \cdot  \Phi^{1/m}(\cos \theta, \sin \theta + \lambda  \cos \theta ).\]
 It is clear that  $c_{i}(\theta) \sim 1$ for $i=1,2$, where the implied constant does not depend on $\theta$.

 Notice that each  $(z_{1},z_{2})=(z_{1}(\theta),z_{2}(\theta))$ satisfies the equation
\begin{equation}\label{levelset}
\Phi(z_{1}, z_{2}  + \lambda z_{1}) = 1.
\end{equation}
By the arguments at  the beginning of Subsection \ref{CasB}, we can obtain a unique  function $\Psi(z_{2})$ defined  for $z_{2}$ containing in a small neighborhood of the origin such that
\[\Phi(\Psi(z_{2}), z_{2}  + \lambda \Psi(z_{2})) = 1.\]
Then
 \[\overline{S}= \bigl\{\bigl(r  \Psi(z_{2}(\theta)), r (z_{2}(\theta) + \lambda \Psi(z_{2}(\theta)) ),r^{m} +c 2^{mk}\bigl): r \in [c_{1}(\theta),c_{2}(\theta)], \theta \in (-\theta_{0},\theta_{0})\bigl\}.\]
 By $z_{2}^{\prime}(\theta) \neq 0$ for each $\theta \in (-\theta_{0},\theta_{0})$, we may change the variable $\theta \rightarrow z^{-1}_{2}(s)$,
where $z^{-1}_{2}$ denotes the inverse function of $z_{2}$ defined in $(-\widetilde{\epsilon_{0}},\widetilde{\epsilon_{0}})$, $0< \widetilde{\epsilon_{0}} \ll 1$.
Finally, we obtain
  \begin{equation}\label{hypersurface2}
 \overline{S}= \bigl\{\bigl(r  \Psi(s), r (s  + \lambda \Psi(s) ),r^{m}+c2^{mk}\bigl): r \in \bigl[ c_{1} (z^{-1}_{2}(s)), c_{2} (z^{-1}_{2}(s))\bigl], s \in (-\widetilde{\epsilon_{0}},\widetilde{\epsilon_{0}})\bigl\}. \nonumber
 \end{equation}

 Now let's turn to estimating the operator $\mathcal{M}_{loc,\lambda,k}$. Based on the above argument, we derive the following estimate
 \begin{align}
  \mathcal{M}_{loc,\lambda,k}f(y) &\lesssim \sup_{t\in [1,2]} \int^{\widetilde{\epsilon_{0}}}_{-\widetilde{\epsilon_{0}}} \int^{c_{2} (z^{-1}_{2}(s))}_{c_{1} (z^{-1}_{2}(s))}
  |f|(y-t(r  \Psi(s), r (s  + \lambda \Psi(s) ),r^{m}+c2^{mk}))   drds \nonumber\\
   &\lesssim \sup_{t\in [1,2]}  \int^{\widetilde{\epsilon_{0}}}_{-\widetilde{\epsilon_{0}}} \int^{\widetilde{c_{2}}}_{\widetilde{c_{1}}} |f|(y-t(r  \Psi(s), r (s  + \lambda \Psi(s) ),r^{m}+c2^{mk})) drds , \nonumber
  \end{align}
  where  $\widetilde{c_{1}}, \widetilde{c_{2}}$ are positive constants.
 By a linear transformation in the $(y_{1},y_{2})$-plane, it is sufficient to consider the  $L^{p} \rightarrow L^{q}$ estimate of  the local maximal operator related  to
\begin{equation}\label{hypersurface2}
\tilde{S}_{k}= \{(r\Psi(s),rs,r^{m}+c2^{mk}): r \in [\widetilde{c_{1}},\widetilde{c_{2}}], s \in (-\widetilde{\epsilon_{0}},\widetilde{\epsilon_{0}})\}.
\end{equation}
We further perform a similar linear transformation to the one above and reduce  to the situation when $\Psi^{\prime}(0)=0$.
Notice that $\Psi$ satisfies Proposition \ref{level set type}. By Taylor expansion of $\Psi(s)$ at $s=0$, we have
\[\Psi(s)=\Psi(0)+s^{M^{\lambda}}g(s), \]
where $\Psi(0) \neq 0$, $3 \le M^{\lambda} \le m$,  $g$ is a smooth function containing  all the remainder terms, and  $g(0) \neq 0$. Then (\ref{lemma2eq1}) follows from Theorem \ref{mainth3}.

\section{Proofs of Theorem \ref{apth1} and Theorem \ref{apth2}}\label{nececondition}

\subsection{Proof of Theorem \ref{apth1}}
 We  prove only  the case when $\lambda \neq \infty$. The case $\lambda = \infty$ follows by similar arguments. We recall that when $\lambda \neq \infty$ and $\Gamma(\lambda)$ is of Type A, $\Phi$ can be written as
\[\Phi(x_1,x_2)=(x_{2}-\lambda x_{1})^{n_{\lambda}}P(x_{1},x_{2}),\]
where $P$ does not vanish along $L_{\lambda}\backslash \{(0,0)\}$. By Taylor expansion of $P(x_{1},x_{2})$ at each point along $L_{\lambda} $, we have
\begin{align}
\Phi(x_{1},x_{2})&= c_{\Phi}(x_{2}-\lambda x_{1})^{n_{\lambda}} x^{m-n_{\lambda}}_{1}+ (x_{2}-\lambda x_{1}) ^{n_{\lambda}+1} \widetilde{P}(x_{1},x_{2}),
\end{align}
where  $\widetilde{P}(x_{1},x_{2})$ contains all higher order remainder terms.

\textbf{(1)} We need to show that $\mathcal{M}_{loc,\lambda}$ cannot be $L^{p}\rightarrow L^{q}$ bounded unless $\frac{1}{p} \le \frac{2}{3}$, $\frac{1}{q} \le \frac{1}{p} \le \frac{3}{q}$ and $\frac{1}{q} \ge \frac{2}{p}-1$.

$\bullet$ The necessity of  $\frac{1}{q} \le \frac{1}{p}$.  By selecting the function $f$ as the characteristic function on balls $B(0,\delta^{-1})$ and the set $T$ as the $\delta^{-1}$  neighborhood of $S_{\lambda}$, where $\delta >0$ can be sufficiently small, if $\mathcal{M}_{loc,\lambda}$ is $L^{p} \rightarrow L^{q} $ bounded, then we can obtain
$|T|^{\frac{1}{q}} \lesssim |B(0,\delta^{-1})|^{\frac{1}{p}}$. This implies the necessity of $\frac{1}{q} \le \frac{1}{p}$.

$\bullet$ The necessity of $\frac{1}{p} \le \frac{3}{q}$. We choose the function $f$ as the characteristic function on the set $T$, where the set $T$ is defined as the $\delta$ neighborhood of $S_{\lambda}$. It is important to note that for any $y$ belonging to the ball $B(0,\delta)$ and $(x_{1},x_{2}) \in \Gamma(\lambda)$,  $y+\bigl(x_{1},x_{2},\Phi(x_{1},x_{2})+c\bigl)$ is an element of $T$. Hence the $L^{p}  \rightarrow L^{q} $ boundedness of $\mathcal{M}_{loc,\lambda}$ implies the validity of $|B(0,\delta)|^{\frac{1}{q}} \lesssim |T|^{\frac{1}{p}}$, which means that the condition $\frac{1}{p} \le \frac{3}{q}$ is necessary.

$\bullet$ The necessity of  $\frac{1}{p} \le \frac{2}{3}$. We choose $f = \chi_{B(0,\delta)}$ which is the characteristic function on the ball $B(0,\delta)$, and the set
\[T=\{t(x_{1},x_{2},\Phi(x_{1},x_{2})+c): t \in [1,2],  \epsilon_{1}/2 <x_{1}< \epsilon_{1}, \epsilon_{2}/2 + \lambda x_{1} <x_{2} <\epsilon_{2} + \lambda x_{1} \},\]
where $0< \delta   \ll \epsilon_{2} \ll \epsilon \ll \epsilon_{1} \ll 1$.
We notice that
\begin{align}\label{transversality}
&\Phi(x_{1},x_{2})-x_{1}\Phi_{1}(x_{1},x_{2})-x_{2}\Phi_{2}(x_{1},x_{2})  \nonumber\\
&=(x_{2}-\lambda x_{1})^{n_{\lambda}} \biggl[(1-m)c_{\Phi}x^{m-n_{\lambda}}_{1}  + (x_{2}-\lambda x_{1}) \overline{P}(x_{1},x_{2}) \biggl],
\end{align}
where   $\overline{P}(x_{1},x_{2})=-n_{\lambda}\widetilde{P}(x_{1},x_{2})-x_{1}\widetilde{P}_{1}(x_{1},x_{2})-x_{2}\widetilde{P}_{2}(x_{1},x_{2})$.
By (\ref{transversality}) and $m \ge 2$, we have
\begin{align}
|T|  = \biggl| \int_{1}^{2} \int_{\epsilon_{1}}^{\epsilon_{1}/2} \int_{ \epsilon_{2}/2 + \lambda x_{1}}^{ \epsilon_{2} + \lambda x_{1}} t^{2} \bigl[\Phi(x_{1},x_{2})-x_{1}\Phi_{1}(x_{1},x_{2})-x_{2}\Phi_{2}(x_{1},x_{2})\bigl]dx_{2}dx_{1}dt \biggl|  \sim \epsilon_{2}^{n_{\lambda}+1}, \nonumber
\end{align}
 and the implied constant does not depend on $\delta$.

   For each $y \in T$, there exists a $t(y) \in [1,2]$, $z_{1} \in (\epsilon_{1}/2 , \epsilon_{1})
   $, and $z_{2} \in (\epsilon_{2}/2  + \lambda z_{1}, \epsilon_{2} + \lambda z_{1} )$ (where $z_1$ and $z_2$ depend on $y$) such that $y =t(y)(z_{1},z_{2},\Phi(z_{1},z_{2})+c) \in T$, there holds
 \begin{align*}
 \mathcal{M}_{loc,\lambda}f(y) &\ge \int_{\Gamma(\lambda)}\chi_{B(0,\delta)}(y-t(y)(x_{1},x_{2},\Phi(x_{1},x_{2})+c))dx_{1}dx_{2}  \nonumber\\
 &\ge  \int_{\{(x_{1},x_{2}) \in \Gamma(\lambda):|x_{i}-z_{i}| < \delta, i=1,2\}}\chi_{B(0,\delta)}(y-t(y)(x_{1},x_{2},\Phi(x_{1},x_{2})+c))dx_{1}dx_{2} \nonumber\\
  &\ge \delta^{2}.
 \end{align*}
  Then it follows from the $L^{p}  \rightarrow L^{q} $ boundedness of $\mathcal{M}_{loc,\lambda}$ that
 $|T|^{\frac{1}{q}}\delta^{2} \lesssim |B(0,\delta)|^{\frac{1}{p}}$, which requires $\frac{1}{p} \le \frac{2}{3}$ since $\delta$ can be sufficiently small.

$\bullet$ The necessity of  $\frac{1}{q}  \ge \frac{2}{p}-1$. Let
 \[x_{1,0}=\epsilon_{1},\quad \quad  x_{2,0} =\epsilon_{2} + \lambda \epsilon_{1}, \quad \quad 0<\epsilon_{2} \ll   \epsilon_{1},\]
 and define the vectors
 \[e_{1}=\bigl(1,0,\Phi_{1}(x_{1,0},x_{2,0})\bigl), \quad e_{2}=\bigl(0,1,\Phi_{2}(x_{1,0},x_{2,0})\bigl), \quad
 e_{3}=\bigl(-\Phi_{1}(x_{1,0},x_{2,0}),-\Phi_{2}(x_{1,0},x_{2,0}), 1\bigl).\]
 We choose $f$ as the characteristic function of the set
 \[D=\{z \in \mathbb{R}^{3}: |z \cdot e_{1}| \le 10 \delta, \quad |z \cdot e_{2}| \le 10 \delta, \quad |z \cdot e_{3}|\le 10 \delta^{2}\} \]
  with $0< \delta \ll \epsilon_{2}$. Denote
 \[a=(x_{1,0}, x_{2,0}, \Phi(x_{1,0},x_{2,0})+c)\]
 and for $t \in [1,2]$,
 \begin{align}
   R_{t}   &= \{y \in \mathbb{R}^{3}: |\bigl(y-ta\bigl) \cdot e_{1}| \le \delta, \quad   |\bigl(y-ta\bigl) \cdot e_{2}| \le \delta, \quad |\bigl(y-ta\bigl) \cdot e_{3}| \le \delta^{2}\}. \nonumber
 \end{align}
  Note that for each $(x_{1},x_{2})$ in the set $\{(x_{1},x_{2}): |x_{i}-x_{i,0}| \le \delta/2, i=1,2\} \subset \Gamma(\lambda)$ and $t \in [1,2]$, we have
 $t(x_{1},x_{2},\Phi(x_{1},x_{2})+c) \in R_{t}$.
  Consequently,  for each  $y \in R_{t}$,
 \[y-t(x_{1},x_{2},\Phi(x_{1},x_{2})+c) \in D.\]
 The $L^{p}  \rightarrow L^{q} $ boundedness of $\mathcal{M}_{loc,\lambda}$  implies
 \begin{equation}\label{neceA13}
 |\cup_{t \in [1,2]} R_{t}|^{\frac{1}{q}} \delta^{2} \lesssim |D|^{\frac{1}{p}}.
  \end{equation}
By (\ref{transversality}), for each $t,t^{\prime} \in [1,2]$ satisfying $|t-t^{\prime}| \sim \delta^{2}$, we obtain
  \[|(t-t^{\prime})a \cdot e_{3}| =|t-t^{\prime}|\biggl| \Phi(x_{1,0},x_{2,0})-x_{1,0}\Phi_{1}(x_{1,0},x_{2,0})-x_{2,0}\Phi_{2}(x_{1,0},x_{2,0})\biggl| \sim \epsilon^{n_{\lambda}}_{2} \delta^{2},\]
  where the implied constant does not depend on $\delta$.   Then we can obtain that   $|\cup_{t \in [1,2]} R_{t}| \sim  \delta^{2}$.
 Hence, (\ref{neceA13}) implies that $\delta^{2+\frac{2}{q}}\le \delta^{\frac{4}{p}}$. Taking $\delta \rightarrow 0$, this  gives  $1+\frac{1}{q} \ge \frac{2}{p}$.


\textbf{(2)}
Recall that $h_{\lambda}=\max\{m/2,n_{\lambda}\}$. We consider the cases $h_{\lambda}=m/2$ and $h_{\lambda}= n_{\lambda}$.

$\bullet$  We first consider the case $h_{\lambda}=m/2$. We choose $f$ as the characteristic function of the set $D= (-10\delta,10 \delta) \times (-10\delta,10\delta) \times (-10\delta^{m}, 10 \delta^{m})$. Additionally, we define the set $ \tilde{U}= (0,\delta) \times (0, \delta)$. We set  $\delta \ll \epsilon \ll 1$.

When $c =0$, for each $(x_{1},x_{2}) \in \tilde{U}\cap \Gamma(\lambda)$ and each $y \in R:=(0, \delta) \times (0,\delta) \times (0,\delta^{m})$, we have $y- \bigl(x_{1},x_{2},\Phi(x_{1},x_{2}) \bigl) \in D$. Then it follows from   the $L^{p}  \rightarrow L^{q} $ boundedness of $\mathcal{M}_{loc,\lambda}$  that $|R|^{\frac{1}{q}}|\tilde{U} \cap \Gamma(\lambda)|  \lesssim |D|^{\frac{1}{p}}$. As a result, we have obtained the necessity of $\frac{m/2+1}{p}-\frac{m/2+1}{q}-1 \le 0$.

 When $c \neq 0$, we choose the set $R_{t}=(0,\delta) \times (0,\delta) \times (tc,\delta^{m} + tc)$, $t \in [1,2]$. Then for each $y \in R_{t}$, $t \in [1,2]$ and $(x_{1},x_{2}) \in \tilde{U} \cap \Gamma(\lambda)$, we have $y-t\bigl(x_{1},x_{2},\Phi(x_{1},x_{2})+c\bigl) \in D$. Then the $L^{p} \rightarrow L^{q} $ boundedness of $\mathcal{M}_{loc,\lambda}$ implies $|\cup_{t \in [1,2]}R_{t}|^{\frac{1}{q}}|\tilde{U} \cap \Gamma(\lambda)| \lesssim |D|^{\frac{1}{p}}$, which shows the necessity of $\frac{m/2+1}{p} - \frac{1}{q} -1 \le 0$, since $|\cup_{t\in[1,2]}R_{t}| \sim_{\epsilon} \delta^{2}$.

$\bullet$  Next, we consider the case $h_{\lambda} = n_{\lambda}$. In this case, we denote the set $D$ by
\[D:=\{(z_{1},z_{2},z_{3}): |z_{1}|<10 \epsilon_{1},|z_{2}-\lambda z_{1}|< 10\delta,|z_{3}|<10 \delta^{n_{\lambda}} \},\]
and we choose $f$ as the characteristic function of the set $D$. We  define $\tilde{U}: =\{(x_{1},x_{2}): \epsilon_{1}/2<x_{1} <\epsilon_{1}, |x_{2}-\lambda x_{1}|<\delta\}$. We set $0<\delta \ll \epsilon \ll \epsilon_{1} \ll 1$.

When $c=0$, we choose $  R: =\{(y_{1},y_{2},y_{3}): 0<y_{1}<\epsilon_{1}, 0<y_{2}-\lambda y_{1}<\delta, 0<y_{3}<\delta^{n_{\lambda}}\}$. Then for each $(x_{1},x_{2}) \in \tilde{U}\cap \Gamma(\lambda)$ and   each  $y \in R $, we have $y- \bigl(x_{1},x_{2},\Phi(x_{1},x_{2}) \bigl) \in D$. Then   the $L^{p}  \rightarrow L^{q} $ boundedness of $\mathcal{M}_{loc,\lambda}$ yields   that $|R|^{\frac{1}{q}}|\tilde{U} \cap \Gamma(\lambda)|  \lesssim |D|^{\frac{1}{p}}$. Since $|R| \sim |D| \sim \delta^{n_{\lambda}+1}$  and $|\tilde{U} \cap \Gamma(\lambda)| \sim \delta$, where the implied constants do  not depend on $\delta$, we  have obtained the necessity of $\frac{n_{\lambda}+1}{p}-\frac{n_{\lambda}+1}{q}-1 \le 0$ because $\delta$ can be sufficiently small.

 When $c \neq 0$, we denote  $R_{t}: =\{(y_{1},y_{2},y_{3}): 0<y_{1}<\epsilon_{1}, 0<y_{2}-\lambda y_{1}<\delta, tc<y_{3}<\delta^{n_{\lambda}}+tc\}$.  Then for each $(x_{1},x_{2}) \in \tilde{U}\cap \Gamma(\lambda)$ and each $y \in R_{t} $, we have $y- t\bigl(x_{1},x_{2},\Phi(x_{1},x_{2})+c \bigl) \in D$. The $L^{p}  \rightarrow L^{q} $ boundedness of $\mathcal{M}_{loc,\lambda}$ yields   that $|\cup_{t\in [1,2]}R_{t}|^{\frac{1}{q}}|\tilde{U} \cap \Gamma(\lambda)|  \lesssim |D|^{\frac{1}{p}}$. Since  $|\cup_{t\in [1,2]}R_{t}|  \sim \delta$ (with implied constant independent of $\delta$),  we   obtain the necessity of $\frac{n_{\lambda}+1}{p}-\frac{1}{q}-1 \le 0$.

 Then we have finished the proof of Theorem \ref{apth1}.  $\hfill$

\subsection{Proof of Theorem \ref{apth2}}
By similar arguments as in the proof of Theorem \ref{apth1}, we can get the necessity of conditions $\frac{1}{q} \le \frac{1}{p} \le \frac{3}{q}$, $\frac{m/2+1}{p}-\frac{1}{q}-1 \le 0$ for $c \neq 0$,  and $\frac{m/2+1}{p}-\frac{m/2+1}{q}-1 \le 0$ for $c =0$. We therefore  omit their proofs here.

Then we are left to prove the necessity of $\frac{1}{p} \le \frac{M^{\lambda}+1}{2M^{\lambda}}$ and $1+ \frac{1}{q} \ge \frac{3M^{\lambda}+2}{M^{\lambda}+2} \cdot \frac{1}{p}$ for $c =0$. We only consider the case $\lambda \neq \infty$, because the case $\lambda =\infty$ can be proved by similar arguments.  We recall that $M^{\lambda} = \omega_{\lambda} +2$, where $\omega_{\lambda}$ is the root multiplicity of $x_{2} = \lambda x_{1}$ in $H\Phi(x_{1},x_{2})$, i.e., $H\Phi(x_{1},x_{2})$ can be written as
\[H\Phi(x_{1},x_{2}) = (x_{2}-\lambda x_{1})^{\omega_{\lambda}}Q(x_{1},x_{2}),\]
where the function $Q$ satisfies $Q(x_{1},\lambda x_{1}) \neq 0$  for each $x_{1} \neq 0$. Let $D^{2}\Phi(x_{1},x_{2})$ be the Hessian matrix of $\Phi$. The fact that  $\Phi$ does not vanish along $L_{\lambda} \backslash \{(0,0)\}$ implies that $\nabla\Phi$ is not the zero vector. The homogeneity of $\Phi$ also yields
\begin{equation}\label{Phi1homogeneous}
\Phi_{1}(x_{1},x_{2}) = \frac{1}{m-1}x_{1}\Phi_{11}(x_{1},x_{2})+\frac{1}{m-1}x_{2}\Phi_{12}(x_{1},x_{2})
\end{equation}
and
\begin{equation}\label{Phi2homogeneous}
\Phi_{2}(x_{1},x_{2}) = \frac{1}{m-1}x_{1}\Phi_{21}(x_{1},x_{2})+\frac{1}{m-1}x_{2}\Phi_{22}(x_{1},x_{2}).
\end{equation}
 Clearly,  $D^{2}\Phi(x_{1}, x_{2})$ is not the zero matrix along $L_{\lambda}\backslash \{(0,0)\}$.
  Combining with the fact that $H\Phi$ vanishes along $L_{\lambda}$, we get that  $D^{2}\Phi(x_{1},\lambda x_{1})$ is a rank-one matrix  for  $x_{1} \neq 0$. Hence,  the matrix $D^{2}\Phi(x_{1},\lambda x_{1})$ has exactly one eigenvalue zero. We denote by $\mathbf{v}(x_{1}) =(v_{1}(x_{1}), v_{2}(x_{1})  )$  the unit eigenvector  associated with this zero eigenvalue. It is easy to check that
\begin{align}
\mathbf{v}(x_{1}) \cdot \nabla\Phi(x_{1},\lambda x_{1})& =  \frac{1}{m-1}x_{1} \biggl[v_{1}(x_{1})\Phi_{11}(x_{1},\lambda x_{1})+v_{2}(x_{1})\Phi_{21}(x_{1},\lambda x_{1})\biggl] \nonumber\\
& \quad \quad +\frac{1}{m-1}\lambda x_{1} \biggl[v_{1}(x_{1})\Phi_{12}(x_{1},\lambda x_{1})+v_{2}(x_{1})\Phi_{22}(x_{1},\lambda x_{1})\biggl] \nonumber\\
&=0.
\end{align}
Thus, $\mathbf{v}(x_{1})$ is orthogonal to $\nabla\Phi(x_{1},\lambda x_{1})$. According to whether or not $\nabla\Phi(x_{1},\lambda x_{1})$ (or equivalently $\mathbf{v}(x_{1})$) is parallel  to one of the coordinate axes, we consider the following two cases.

\textbf{Case I.}
$\nabla\Phi(x_{1},\lambda x_{1})$ is parallel to one of the coordinate axes for each $x_{1} \neq 0$. This means that
exactly one of $\Phi_{1}(x_{1},\lambda x_{1})$ and $\Phi_{2}(x_{1},\lambda x_{1})$ vanishes for each $x_{1} \neq 0$. Without loss of generality, we may assume that $\Phi_{1}(x_{1},\lambda x_{1}) \neq 0$  and $\Phi_{2}(x_{1},\lambda x_{1})=0$. Then we have $\mathbf{v}(x_{1})=(0,1)$ for each $x_{1} \neq 0$. For $(x_{1},x_{2}) \in \Gamma(\lambda)$, we perform a Taylor expansion on $\Phi(x_{1},x_{2})$ at $(x_{1},\lambda x_{1})$ along the vector $(0,1)$, and obtain that
\begin{align}\label{claimPhi}
\Phi(x_{1},\lambda x_{1} + \tau ) = \Phi(x_{1},\lambda x_{1}) + \frac{\tau^{2}}{2} \Phi_{22}(x_{1},\lambda x_{1}+\theta \tau ),
\end{align}
where the constant  $\theta \in (0,1)$ depends on $x_{1}$ and $\tau$.  We claim that
\begin{equation}\label{claimPhi22}
\Phi_{22}(x_{1},\lambda x_{1}+\theta \tau) = \theta^{\omega_{\lambda}}\tau^{\omega_{\lambda}}\overline{P}(x_{1},\lambda x_{1}+\theta \tau),
\end{equation}
where the function $\overline{P}$ satisfies $\overline{P}(x_{1},\lambda x_{1}) \neq 0$ for each $x_{1} \neq 0$. We note that if the claim holds true, then the hypersurface $S_{\lambda}$ can be parameterized by
\[S_{\lambda}=\bigl\{\bigl(x_{1},\lambda x_{1}+\tau, \Phi(x_{1},\lambda x_{1})+\frac{1}{2}\theta^{\omega_{\lambda}} \tau^{\omega_{\lambda}+2}\overline{P}(x_{1},\lambda x_{1}+\theta \tau)\bigl): x_{1}>0, -\epsilon <\tau< \epsilon \bigl\}. \]
We first use this claim to prove the necessary condition $\frac{1}{p}< \frac{M^{\lambda} +1 }{2M^{\lambda}}$ and $1+ \frac{1}{q} \ge \frac{3M^{\lambda}+2}{M^{\lambda}+2} \cdot \frac{1}{p}$, and subsequently prove the claim itself.

 $\bullet$ Necessity of $\frac{1}{p}< \frac{M^{\lambda} +1 }{2M^{\lambda}}$. Let
 \[\tilde{U} = \{(x_{1},\tau): x_{1} \in (\frac{\epsilon_{1}}{2},\epsilon_{1}), \tau \in (0, \delta)\}, \quad \quad 0 < \delta \ll  \epsilon \ll \epsilon_{1} \ll 1.\]
  Set
\[\mathcal{T} =\{t(x_{1}, \lambda x_{1}+\tau,\Phi(x_{1},\lambda x_{1})): (x_{1},\tau) \in \tilde{U}, t \in [1,2]\}. \]
By the homogeneous property, $\Phi(x_{1},\lambda x_{1} )=x^{m}_{1}\Phi(1,\lambda) \neq 0$, and $m \ge 2$. We have
 \[|\mathcal{T}| =\biggl|\int^{2}_{1} \int^{\epsilon_{1}}_{\frac{\epsilon_{1}}{2}} \int^{\delta}_{0}t^{2}(1-m)x^{m}_{1}\Phi(1,\lambda ) d\tau dx_{1}dt \biggl| \sim \delta,\]
  where the implied constant is independent of  $\delta$.

We define
\[D= [-10 \delta^{M^{\lambda}}, 10 \delta^{M^{\lambda}}] \times  [-10\delta, 10 \delta] \times [-10   \delta^{M^{\lambda}}, 10    \delta^{M^{\lambda}}].\]
Let $f= \chi_{D}$ be the characteristic function of  $D$. Then
\begin{equation}\label{ubound}
\|f\|_{L^{p}} \sim \delta^{\frac{2M^{\lambda}+1}{p}}.
\end{equation}
Notice that for each $y \in \mathcal{T}$, there exists  $s_{y} \in [1,2]$ and $(z_{1},\tilde{\tau}) \in \tilde{U}$ (which depend on $y$) such that $y=s_{y}(z_{1},\lambda z_{1} + \tilde{\tau},\Phi(z_{1},\lambda z _{1}))$. Then there holds
\begin{align}
&\mathcal{M}_{loc,\lambda}f(y) \nonumber\\
&\ge \int_{\tilde{U}} \chi_{D}\biggl(s_{y}\bigl(z_{1}-x_{1},\lambda z_{1}+ \tilde{\tau}-\lambda x_{1}-\tau,\Phi(z_{1},\lambda z_{1}) -\Phi(x_{1},\lambda x_{1} + \tau)\bigl)\biggl)dx_{1}d\tau \nonumber\\
&\ge  \int_{\tilde{U}(z_{1},\tilde{\tau})} \chi_{D}\biggl(s_{y}\bigl(z_{1}-x_{1}, \lambda z_{1}+ \tilde{\tau}-\lambda x_{1}-\tau,\Phi(z_{1},\lambda z_{1}) -\Phi(x_{1},\lambda x_{1} + \tau)\bigl)\biggl)dx_{1}d\tau ,
\end{align}
where $\tilde{U}(z_{1},\tilde{\tau})= \{(x_{1},\tau) \in \tilde{U}: |x_{1}-z_{1}| \le \delta^{M^{\lambda}} /2, |\tau-\tilde{\tau}| \le \delta/2 \}$. It is clear that $|\tilde{U}(z_{1},\tilde{\tau})| \gtrsim \delta^{M^{\lambda}+1}$. Moreover, for each $(x_{1},\tau) \in \tilde{U}(z_{1},\tilde{\tau})$, there holds
\begin{equation}\label{Nececheck1}
|s_{y}(z_{1}-x_{1})| \le 10 \delta^{M^{\lambda}}, \quad |s_{y}(\lambda z_{1}+ \tilde{\tau}-\lambda x_{1}-\tau)| \le 10  \delta.
\end{equation}
By (\ref{claimPhi22}) and (\ref{claimPhi}),
\begin{align}\label{Nececheck2}
\biggl|s_{y} \bigl[ \Phi(z_{1},\lambda z_{1}) -\Phi(x_{1},\lambda x_{1}+\tau ) \bigl]  \biggl|
& \le  \biggl| \Phi(z_{1},\lambda z_{1}) - \Phi(x_{1},\lambda x_{1}) \biggl| + \biggl|\theta^{\omega_{\lambda}}\tau^{\omega_{\lambda}+2} \overline{P}(x_{1},\lambda x_{1}+\theta \tau )\biggl| \nonumber\\
&\le 10   \delta^{M^{\lambda}}.
\end{align}

The estimates (\ref{Nececheck1}) and (\ref{Nececheck2}) yield that
\[s_{y}\bigl(z_{1}-x_{1}, \lambda z_{1} + \tilde{\tau}-\lambda x_{1}-\tau,\Phi(z_{1},\lambda z_{1}) -\Phi(x_{1}, \lambda x_{1} + \tau )\bigl) \in D.\]
Hence for each $y \in \mathcal{T}$, there holds
\[\mathcal{M}_{loc,\lambda}f(y) \ge |\tilde{U}(z_{1},\tilde{\tau})| \gtrsim \delta^{M^{\lambda}+1}. \]
Furthermore,
\begin{equation}\label{lbound}
 \|\mathcal{M}_{loc,\lambda}f  \|_{L^{p}} \ge  \|\mathcal{M}_{loc,\lambda}f\|_{L^{p}(\mathcal{T})} \gtrsim_{\epsilon} \delta^{M^{\lambda}+1+\frac{1}{p}}.
\end{equation}
If $\mathcal{M}_{loc,\lambda}$ is $L^{p}$ bounded, then the inequalities (\ref{ubound}) and (\ref{lbound})  imply that
\[\delta^{M^{\lambda}+1+\frac{1}{p}} \lesssim_{\epsilon} \delta^{\frac{2M^{\lambda}+1}{p}}.\]
Then it follows that $\frac{1}{p} \le \frac{M^{\lambda}+1}{2M^{\lambda}}$ since $\delta$ can be sufficiently small.

$\bullet$ Necessity of $1+ \frac{1}{q} \ge \frac{3M^{\lambda}+2}{M^{\lambda}+2} \cdot \frac{1}{p}$.
We choose $\delta$ and $\epsilon_{1}$ such that $0<\delta \ll \epsilon \ll \epsilon_{1} \ll 1$. Let
\[e_{1}:=\bigl(1,0, m\epsilon_{1}^{m-1}\Phi(1,\lambda)\bigl), \quad \quad e_{2}:=\bigl(0,1,0\bigl), \quad \quad e_{3}:=\bigl(-m\epsilon_{1}^{m-1}\Phi(1,\lambda),0,1\bigl).\]
Let
\[a:=\bigl(\epsilon_{1},\lambda \epsilon_{1},\Phi(\epsilon_{1},\lambda \epsilon_{1})\bigl).\]
We define
\[R_{t}:=\{z\in \mathbb{R}^{3}: |(z-ta)\cdot e_{1}| \le \delta^{M^{\lambda}/2},\quad |(z-ta)\cdot e_{2}| \le \delta, \quad |(z-ta)\cdot e_{3}| \le \delta^{M^{\lambda}} \}.\]
For each $(x_{1},\tau)\in \widetilde{U}: = (\epsilon_{1}-\frac{\delta^{M^{\lambda}/2}}{2},\epsilon_{1}+ \frac{\delta^{M^{\lambda}/2}}{2}) \times (0,\frac{\delta}{2})$, we have
\begin{equation}\label{SsubsetRt}
t\bigl (x_{1}, \lambda x_{1} + \tau, \Phi(x_{1},\lambda x_{1}+\tau)\bigl) \in R_{t}.
\end{equation}
Indeed, applying (\ref{claimPhi22}), we can obtain that
\begin{align}
&\biggl|t\biggl(\bigl(x_{1},\lambda x_{1}+\tau, \Phi(x_{1},\lambda x_{1}+\tau)\bigr)-a\biggr) \cdot e_{1}\biggl| \nonumber\\
&\le 2 \biggl(|x_{1}-\epsilon_{1}|  + |m\epsilon^{m-1}_{1}\Phi(1,\lambda)|\bigl|\Phi(x_{1},\lambda x_{1}) +\frac{\theta^{\omega_{\lambda}}\tau^{M^{\lambda}}}{2}\overline{P}(x_{1},\lambda x_{1}+\tau) - \Phi(\epsilon_{1},\lambda \epsilon_{1}) \bigl|  \biggl) \nonumber\\
&\le \delta^{M^{\lambda}/2},
\end{align}
\begin{align}
\biggl|t\biggl(\bigl(x_{1}, \tau, \Phi(x_{1},\lambda x_{1}+\tau)\bigr)-a\biggr) \cdot e_{2}\biggl|
\le 2   |\lambda(x_{1}-\epsilon_{1}) + \tau| \le \delta,
\end{align}
and
\begin{align}
&\biggl|t\biggl(\bigl(x_{1},\tau, \Phi(x_{1},\lambda x_{1}+\tau)\bigr)-a\biggr) \cdot e_{3}\biggl| \nonumber\\
&\le 2 \biggl|-(x_{1}-\epsilon_{1})m\epsilon^{m-1}_{1}\Phi(1,\lambda )  +  \bigl[\Phi(x_{1},\lambda x_{1}) +\frac{\theta^{\omega_{\lambda}}\tau^{M^{\lambda}}}{2}\overline{P}(x_{1},\lambda x_{1}+\tau) - \Phi(\epsilon_{1},\lambda \epsilon_{1}) \bigl]  \biggl|\nonumber\\
&\le \delta^{M^{\lambda}}.
\end{align}

Let
\[D:=\{z\in \mathbb{R}^{3}: |z \cdot e_{1}| \le 10 \delta^{M^{\lambda}/2},\quad |z \cdot e_{2}| \le 10 \delta, \quad |z \cdot e_{3}| \le 10 \delta^{M^{\lambda}} \}.\]
Then for each $y \in R_{t}$ and $(x_{1},\tau) \in \widetilde{U}$, by (\ref{SsubsetRt}), we have
\[y-t\bigl(x_{1}, \lambda x_{1} + \tau, \Phi(x_{1},\lambda x_{1}+\tau)\bigl) \in D. \]
We choose $f$ as the characteristic function of $D$. The $L^{p} \rightarrow L^{q}$ boundedness of $\mathcal{M}_{loc,\lambda}$ yields
\begin{equation}\label{necepq2}
|\cup_{t\in[1,2]}R_{t}|^{\frac{1}{q}}|\widetilde{U}|\lesssim |D|^{\frac{1}{p}}.
\end{equation}
For each $t,t^{\prime} \in [1,2]$ satisfying $|t-t^{\prime}| \sim \delta^{M^{\lambda}}$, we have
\begin{equation}
|e_{3} \cdot (t-t^{\prime})a| = |t-t^{\prime}||\Phi(\epsilon_{1},\lambda \epsilon_{1})-\epsilon_{1} m \epsilon_{1}^{m-1}\Phi(1,\lambda )|=|(m-1)\epsilon^{m}_{1}\Phi(1,\lambda)|\delta^{M^{\lambda}}.\nonumber
\end{equation}
Since $\Phi$ does not vanish along $L_{\lambda}\backslash \{(0,0)\}$ and $m \ge 2$, we can obtain that
\[|\cup_{t\in[1,2]}R_{t}| \sim \delta^{\frac{M^{\lambda}}{2}+1}.\]
It follows from (\ref{necepq2}) that
\[\delta^{\frac{M^{\lambda}}{2}+1 +(\frac{M^{\lambda}}{2}+1)\frac{1}{q}} \lesssim \delta^{(M^{\lambda}+ \frac{M^{\lambda}}{2}+1)\frac{1}{p}}.\]
Then we obtain $1+\frac{1}{q} \ge \frac{3M^{\lambda}+2}{M^{\lambda}+2}\cdot \frac{1}{p} $ because $\delta$ can be sufficiently small.

Next, we will prove (\ref{claimPhi22}). It suffices to prove  that $\partial^{(k)}_{2}\Phi$ vanishes along  $L_{\lambda} \backslash \{(0,0)\}$ for $2 \le k \le \omega_{\lambda} +1$, and $\partial^{(\omega_{\lambda}+2)}_{2}\Phi$ does not vanish along  $L_{\lambda} \backslash \{(0,0)\}$. Then we can obtain (\ref{claimPhi22}) by   Taylor's formula. Moreover, we have  \begin{equation}\label{Mm}
M^{\lambda}=\omega_{\lambda}+2 \le m
\end{equation}
because the degree of $x_{2}$ in the polynomial  $\Phi$ cannot exceed $m$.

We will prove $\Phi_{22}(x_{1},\lambda  x_{1} ) = 0$ by contradiction. Assuming that $\Phi_{22}(x_{1}, \lambda  x_{1} ) \neq 0$,
since    $\mathrm{H}\Phi (x_{1}, \lambda x_{1})=0$,   by (\ref{Phi1homogeneous}) and (\ref{Phi2homogeneous}), we have
\begin{align}\label{Phi1Phi2}
&\Phi_{1}(x_{1}, \lambda  x_{1} ) \nonumber\\
&=   \frac{\Phi_{12}(x_{1}, \lambda x_{1} )}{\Phi_{22}(x_{1}, \lambda  x_{1} )} \biggl( \frac{1}{m-1} x_{1} \Phi_{12}(x_{1}, \lambda  x_{1}) +\frac{1}{m-1}\lambda x_{1} \Phi_{22}(x_{1}, \lambda x_{1} )\biggl) \nonumber\\
&=   \frac{\Phi_{12}(x_{1}, \lambda  x_{1} )}{\Phi_{22}(x_{1}, \lambda x_{1} )} \Phi_{2}(x_{1}, \lambda  x_{1} ).
\end{align}
  Since $\Phi_{2}(x_{1},\lambda  x_{1}  )= 0$, (\ref{Phi1Phi2}) implies   $\Phi_{1}(x_{1},\lambda  x_{1} ) = 0$. This contradicts  the fact that $\Phi_{1}$ does not vanish along $L_{\lambda} \backslash \{(0,0)\}$.


By  $\Phi_{22}(x_{1},\lambda x_{1}) = 0$ and  $\Phi_{2}(x_{1},\lambda x_{1}) = 0$, we may assume  that
\[\Phi_{2}(x_{1},x_{2}) = (x_{2}-\lambda x_{1})^{\tilde{M}} \tilde{Q}(x_{1},x_{2}), \]
where $\tilde{M} \ge 2$ and $\tilde{Q}$ does not vanish along $L_{\lambda} \backslash \{(0,0)\}$. Then we have
\begin{align}
H\Phi(x_{1},x_{2}) 
= (x_{2}-\lambda x_{1})^{\tilde{M}-1} Q(x_{1},x_{2}), \nonumber
\end{align}
where
\begin{align}\label{HPhiexpansion}
Q(x_{1},x_{2})&= \Phi_{11}(x_{1},x_{2})\biggl[\tilde{M} \tilde{Q}(x_{1},x_{2}) +  (x_{2}-\lambda x_{1}) \tilde{Q}_{2}(x_{1},x_{2}) \biggl]-(x_{2}-\lambda  x_{1})^{\tilde{M}-1} \times \nonumber\\
& \biggl[-\tilde{M}\lambda \tilde{Q}(x_{1},x_{2}) + (x_{2}-\lambda x_{1}) \tilde{Q}_{1}(x_{1},x_{2}) \biggl]^{2}.
 \end{align}
  To prove the claim, it suffices to show that  $Q$ does not vanish along $L_{\lambda} \backslash \{(0,0)\}$,  which implies $\omega_{\lambda}= \tilde{M}-1$. Then $M^{\lambda}= \omega_{\lambda}+2 $ is the smallest positive integer $k$ such that $ \partial^{(k)}_{2}\Phi$ does not vanish along $L_{\lambda} \backslash \{(0,0)\}$.

 To show that $Q$ does not vanish along $L_{\lambda} \backslash \{(0,0)\}$, by (\ref{HPhiexpansion}),  it suffices to prove that   $\Phi_{11}$ is non-vanishing along $L_{\lambda} \backslash \{(0,0)\}$. Indeed,  by Lemma 3.3 in \cite{IKM1}, since $\Phi_{1}$ does not vanish along  $L_{\lambda} \backslash \{(0,0)\}$, while $H\Phi$ vanishes along $L_{\lambda}\backslash \{(0,0)\}$, we can obtain  that $\Phi_{11}$ does not vanish along $L_{\lambda} \backslash \{(0,0)\}$.  This completes the proof of the claim.

\textbf{Case II.} $\nabla\Phi(x_{1},\lambda x_{1})$ is not parallel to any of the coordinate axes for each $x_{1} \neq 0$. This means that for each $x_{1} \neq 0$, neither $\Phi_{1}(x_{1},\lambda x_{1})$ nor $\Phi_{2}(x_{1},\lambda x_{1})$  vanishes.

We first do some reduction. By some linear transformations which do not change the $L^{p} \rightarrow L^{q}$  norm of $\mathcal{M}_{loc,\lambda}$ up to constants, it suffices to estimate the local maximal operator $\widetilde{\mathcal{M}}_{loc,0}$ over the hypersurface
\[\widetilde{S_{0}}:=\{(x_{1},x_{2},\widetilde{\Phi}(x_{1},x_{2})): |x_{2}|<\epsilon |x_{1}|\}, \quad \quad \epsilon \ll 1, \]
where
\[\widetilde{\Phi}(x_{1},x_{2})=\Phi\biggl(x_{1}-\frac{\Phi_{2}(x_{1},\lambda x_{1})}{\Phi_{1}(x_{1},\lambda x_{1})+\lambda \Phi_{2}(x_{1},\lambda x_{1})}x_{2}, x_{2}+ \lambda\bigl[x_{1}-\frac{\Phi_{2}(x_{1},\lambda x_{1})}{\Phi_{1}(x_{1},\lambda x_{1})+\lambda \Phi_{2}(x_{1},\lambda x_{1})}x_{2} \bigl]  \biggl).\]
We note that $\Phi(x_{1},x_{2})$ does not vanish along $L_{\lambda} \backslash \{(0,0)\}$. The homogeneous property of $\Phi$ implies  $\Phi_{1}(x_{1},\lambda x_{1})+\lambda \Phi_{2}(x_{1},\lambda x_{1}) \neq 0$, and
\[\frac{\Phi_{2}(x_{1},\lambda x_{1})}{\Phi_{1}(x_{1},\lambda x_{1})+\lambda \Phi_{2}(x_{1},\lambda x_{1})}=\frac{\Phi_{2}(1,\lambda  )}{\Phi_{1}(1,\lambda  )+\lambda \Phi_{2}(1,\lambda )}\]
is a non-zero constant.  Clearly,  $\widetilde{\Phi}$ is a polynomial homogeneous of degree $m$,
\[\widetilde{\Phi}(x_{1},0) \neq 0, \quad \quad \widetilde{\Phi}_{2}(x_{1},0) = 0\]
for each $x_{1}\neq 0$. Moreover, by Lemma \ref{level set leema}, we have
\[H\widetilde{\Phi}(x_{1},x_{2})=x^{\omega_{\lambda}}_{2}Q\biggl(x_{1}-\frac{\Phi_{2}(1,\lambda )}{\Phi_{1}(1,\lambda )+\lambda \Phi_{2}(1,\lambda )}x_{2}, x_{2}+ \lambda\bigl[x_{1}-\frac{\Phi_{2}(1,\lambda )}{\Phi_{1}(1,\lambda )+\lambda \Phi_{2}(1,\lambda  )}x_{2} \bigl]  \biggl),\]
which implies that the root multiplicity of $x_{2}=0$ in $H\widetilde{\Phi}$ equals $\omega_{\lambda}$. Hence, by the arguments  from Case I,   $\widetilde{\mathcal{M}}_{loc,0}$ cannot be $L^{p} \rightarrow L^{q}$ bounded unless $\frac{1}{p} \le \frac{M^{\lambda}+1}{2M^{\lambda}}$ and $1+\frac{1}{q} \le \frac{3M^{\lambda}+2}{M^{\lambda}+2} \cdot \frac{1}{p}$.

This completes the proof of Theorem  \ref{apth2}.  $\hfill$

\section{Proofs of Proposition \ref{muphihphi} and Proposition \ref{level set type}}\label{proofofproperties}
We will prove Proposition \ref{muphihphi} and Proposition \ref{level set type} in Subsection \ref{prop1.3}
 and Subsection \ref{prop1.5}, respectively.

\subsection{Proof of Proposition \ref{muphihphi}}\label{prop1.3}
For convenience, we denote
\[E_{\Phi}=\{n_{\lambda}: \Gamma(\lambda) \text{ is of Type A}\} \cup \{ m/2\},\]
and denote $\mu_{\Phi}$ by the maximal element of $E_{\Phi}$.

We have the following observations before the proof of Proposition \ref{muphihphi}.
For fixed $x \in S^{1}$, if $\Phi(x) \neq 0$,  it is clear that  $\textsf{ordd} \Phi (x) =0$.
Otherwise, without loss of generality, we may assume that $x \in S^{1} \cap \{x_{2}=\lambda_{h}x_{1}\}$, where $x_{2}=\lambda_{h}x_{1}$ corresponds to the  factor $(x_{2}-\lambda_{h}x_{1})^{n_{h}}$ in (\ref{Phi}) with $\lambda_{h} \in \mathbb{R}$, $n_{h} \ge 1$. A simple caculation implies that  $\textsf{ordd} \Phi (x) =n_{h}$. Moreover, for a homogeneous polynomial $\Phi$, the supremum in the definition of $\textsf{ordd}\Phi$ can always be achieved at some $x \in S^{1}$. Conversely, for fixed straight line  passing through the origin which is contained in $Z_{\Phi}$,  without loss of generality, we may again consider $x_{2}=\lambda_{h}x_{1}$ corresponding to the  factor $(x_{2}-\lambda_{h}x_{1})^{n_{h}}$ in (\ref{Phi}), there is an   $x \in S^{1} \cap \{x_{2}= \lambda_{h}x_{1}\}$ such that $\textsf{ordd} \Phi (x) =n_{h}$. In particular, if $E_{\Phi} \backslash \{ m/2 \} \neq \emptyset$, then for each $\kappa \in E_{\Phi} \backslash \{ m/2 \}$, there exists an  $x_{\kappa} \in S^{1}$ such that $\textsf{ordd} \Phi (x_{\kappa}) =\kappa $.

We are ready to prove Proposition \ref{muphihphi}.
 We first consider the case when $h_{ \Phi} = m/2$. If $E_{\Phi} = \{\frac{m}{2}\}$, then it is clear that
$\mu_{\Phi}=h_{\Phi}= \frac{m}{2}$; if $E_{\Phi} \backslash \{\frac{m}{2}\} \neq \emptyset$, there holds
\[\frac{m}{2} \ge \sup_{x \in S^{1}} \textsf{ordd} \Phi(x) \ge \sup_{\kappa \in E_{\Phi} \backslash \{m/2\}} \textsf{ordd} \Phi(x_{\kappa})= \sup_{\kappa \in E_{\Phi} \backslash \{m/2\}}  \kappa, \]
then it follows that $\mu_{\Phi} = \frac{m}{2} = h_{\Phi}$.

In the case when $h_{ \Phi} > m/2$, there exists an $x  \in S^{1} \cap Z_{\Phi}$ such that $\textsf{ordd} \Phi = \textsf{ordd} \Phi(x)> m/2$. Without loss of generality, we may assume that $x \in S^{1} \cap \{x_{2}=\lambda_{h}x_{1}\}$, where $x_{2}=\lambda_{h}x_{1}$ corresponds to the  factor $(x_{2}-\lambda_{h}x_{1})^{n_{h}}$ in (\ref{Phi}). Then we write $\Phi$ as
\[\Phi(x_{1},x_{2}) = (x_{2}-\lambda_{h}x_{1})^{n_{h}}Q(x_{1},x_{2}),\]
where $Q$ is a homogeneous polynomial such that $Q(x_{1},\lambda_{h}x_{1}) \neq 0$ for $x_{1} \neq 0$. It is easy to claim that when $n_{h} \ge 3$, $H\Phi(x_{1},\lambda_{h}x_{1}) =0$.

When $n_{h} > \frac{m}{2}$ and $m \ge 4$, then we must have $n_{h} \ge 3$. Therefore, when $m \ge 4$, we have $n_{h} \in E_{\Phi}$ and
\[n_{h} = \sup_{x \in S^{1}} \textsf{ordd} \Phi(x) \ge \sup_{\kappa \in E_{\Phi} \backslash \{m/2\}} \textsf{ordd} \Phi(x_{\kappa})= \sup_{\kappa \in E_{\Phi} \backslash \{m/2\}}  \kappa.\]
Hence there holds $\mu_{\Phi} = h_{\Phi} = n_{h}$.

Now we are left with the case when  $m=3$ or $m=2$.  When $m=3$ and $n_{h}=3$, the conclusion follows directly by the previous claim. When $m=3$ and $n_{h}=2$, we may write
\[\Phi(x_{1},x_{2}) = (x_{2}-\lambda_{h}x_{1})^{2}Q(x_{1},x_{2}),\]
where $Q$ is a linear polynomial that is homogeneous of degree one. A direct calculation implies that  $H\Phi(x_{1},\lambda_{h}x_{1}) =0$, then we have $\mu_{\Phi} = h_{\Phi} = 2$. When $m=2$, we must have $n_{h}=2$ because $h_{\phi}> \frac{m}{2}$, then
\[\Phi(x_{1},x_{2}) = c_{\Phi}(x_{2}-\lambda_{h}x_{1})^{2}.\]
It is obvious that $H\Phi \equiv 0$, so $n_{h} \in E_{\Phi}$ and $\mu_{\Phi} = h_{\Phi} =2$. This finishes the proof of Proposition \ref{muphihphi}.

\subsection{Proof of  Proposition \ref{level set type}}\label{prop1.5}
The following preliminary lemma will be useful in the proof of Proposition \ref{level set type}.
\begin{lemma}\label{level set leema}
Assume that $f(x_{1},x_{2})$ is a smooth real-valued function in $\mathbb{R}^{2}$. \\
(1) For $\lambda \in \mathbb{R}$, we denote $f_{\lambda}(x_{1},x_{2})=f(x_{1},x_{2}+\lambda x_{1})$,  then
\[Hf_{\lambda}(x_{1},x_{2})=(Hf)(x_{1},x_{2}+\lambda x_{1}).\]
(2) Similarly, for $\tau \in \mathbb{R}$, let $f_{\tau}(x_{1},x_{2})=f(x_{1}+\tau x_{2},x_{2})$, then
\[Hf_{\tau}(x_{1},x_{2})=(Hf)(x_{1}+\tau x_{2},x_{2}).\]
\end{lemma}
\begin{proof}
We only prove part (1), part (2) can be proved similarly. Since
\[\partial_{1}f_{\lambda}(x_{1},x_{2})= (\partial_{1}f)(x_{1},x_{2}+\lambda x_{1})+ \lambda (\partial_{2}f)(x_{1},x_{2}+\lambda x_{1}),\]
we have
\[\partial_{11}f_{\lambda}(x_{1},x_{2})= (\partial_{11}f)(x_{1},x_{2}+\lambda x_{1})+ 2\lambda (\partial_{12}f)(x_{1},x_{2}+\lambda x_{1})+\lambda^{2} (\partial_{22}f)(x_{1},x_{2}+\lambda x_{1}),\]
and
\[\partial_{12}f_{\lambda}(x_{1},x_{2})= (\partial_{12}f)(x_{1},x_{2}+\lambda x_{1})+ \lambda (\partial_{22}f)(x_{1},x_{2}+\lambda x_{1}).\]
In addition,
\[\partial_{2}f_{\lambda}(x_{1},x_{2})= (\partial_{2}f)(x_{1},x_{2}+\lambda x_{1}),\]
then
\[\partial_{22}f_{\lambda}(x_{1},x_{2})= (\partial_{22}f)(x_{1},x_{2}+\lambda x_{1}).\]
Now it is clear that
\begin{align}
&Hf_{\lambda}(x_{1},x_{2})\nonumber\\
&= \partial_{11}f_{\lambda}(x_{1},x_{2}) \partial_{22}f_{\lambda}(x_{1},x_{2})-[\partial_{12}f_{\lambda}(x_{1},x_{2})]^{2} \nonumber\\
&=\bigl[(\partial_{11}f)(x_{1},x_{2}+\lambda x_{1})+ 2\lambda (\partial_{12}f)(x_{1},x_{2}+\lambda x_{1})+\lambda^{2} (\partial_{22}f)(x_{1},x_{2}+\lambda x_{1}) \bigl]\nonumber\\
& \quad \times (\partial_{22}f)(x_{1},x_{2}+\lambda x_{1}) - \bigl[ (\partial_{12}f)(x_{1},x_{2}+\lambda x_{1})+ \lambda (\partial_{22}f)(x_{1},x_{2}+\lambda x_{1}) \bigl]^{2}              \nonumber\\
& = (\partial_{11}f)(x_{1},x_{2}+\lambda x_{1}) (\partial_{22}f)(x_{1},x_{2}+\lambda x_{1}) - (\partial_{12}f)(x_{1},x_{2}+\lambda x_{1})^{2} \nonumber\\
&=(Hf)(x_{1},x_{2}+\lambda x_{1}). \nonumber
\end{align}
This completes the proof of Lemma \ref{level set leema}.
\end{proof}

Now we are ready to prove Proposition \ref{level set type}. Let
\[\widetilde{\Phi}_{\lambda}(z_{1},z_{2}) = \Phi_{\lambda}(z_{1}+ \Psi^{\prime}(0)z_{2},z_{2}), \quad \widetilde{\Psi}(z_{2})=\Psi(z_{2}) -\Psi^{\prime}(0)z_{2}. \]
  Notice that for each positive integer $k \ge 2$, $\Psi^{(k)}(z_{2})= \widetilde{\Psi}^{(k)}(z_{2})$. $M^{\lambda}$ in Proposition \ref{level set type}  is just the  smallest integer   such that $\widetilde{\Psi}^{(M^{\lambda})}(0) \neq 0$.

We need to show that $M^{\lambda}< \infty$.  It is clear that there holds  the equation
\begin{equation}\label{reduce level set}
\widetilde{\Phi}_{\lambda}\bigl(\widetilde{\Psi}(z_{2}),z_{2}\bigl) =1, \quad  |z_{2}| < \epsilon,
\end{equation}
and  $\tilde{\Psi}^{\prime}(0)=0$. By taking derivative with respect to the variable $z_{2}$ in (\ref{reduce level set}) and setting $z_{2}=0$, we obtain
\[ \partial_{2}\widetilde{\Phi}_{\lambda}(z^{0}_{1},0)=0.\]
By the homogeneity of $\widetilde{\Phi}_{\lambda}$, we have
 \[\partial_{2}\widetilde{\Phi}_{\lambda}(z_{1},0)=0, \quad \forall z_{1} \neq 0.\]
By taking the $k$-th order derivative with respect to the variable $z_{2}$ on both side of the equation (\ref{reduce level set}) and putting $z_{2}=0$, we can obtain that
\[\widetilde{\Psi}^{(k)}(0)\partial_{1}\widetilde{\Phi}_{\lambda}(z^{0}_{1},0)  + (\partial^{k}_{2}\widetilde{\Phi}_{\lambda})(z^{0}_{1},0) =0, \quad 1 \le k \le M^{\lambda}. \]
Then $M^{\lambda}$ is the smallest integer such that
$(\partial^{M^{\lambda}}_{2}\widetilde{\Phi}_{\lambda})(z^{0}_{1},0) \neq 0$. We have $M^{\lambda} < \infty$ because $\tilde{\Phi}_{\lambda}$ is a homogeneous polynomial which is not of the form $cz^{m}_{1}$, $c \in \mathbb{R}$.

We first obtain the expression  of $\widetilde{\Phi}_{\lambda}$ and then the expression of $H\widetilde{\Phi}_{\lambda}$ via $M^{\lambda}$.
By homogeneity, from $(\partial^{M^{\lambda}}_{2}\widetilde{\Phi}_{\lambda})(z^{0}_{1},0) \neq 0$, we have
\[(\partial^{M^{\lambda}}_{2}\widetilde{\Phi}_{\lambda})(z_{1},0) \neq 0, \quad \forall z_{1} \neq 0. \]
 By Taylor's  formula  of $\widetilde{\Phi}_{\lambda}(z_{1},z_{2})$ at $z_{2}=0$, we have
\begin{equation}\label{tildePhilambda}
\widetilde{\Phi}_{\lambda}(z_{1},z_{2}) = c_{\Phi}z^{m}_{1} + z^{M^{\lambda}}_{2}P_{\lambda}(z_{1},z_{2}),
\end{equation}
where the constant $c_{\Phi} \neq 0$, and  $P_{\lambda}(z_{1},0) \neq 0$ for each $z_{1} \neq 0$.
It follows from (\ref{tildePhilambda}) and $m \ge 2$ that
\begin{equation}\label{HPhi1}
H\widetilde{\Phi}_{\lambda}(z_{1},z_{2}) = z^{M^{\lambda}-2}_{2}Q_{\lambda}(z_{1},z_{2}),
\end{equation}
 where $Q_{\lambda}(z_{1},0) \neq 0$ for each $z_{1} \neq 0$.

 Next, we will obtain the expression of $H\widetilde{\Phi}_{\lambda}$  by (\ref{HPhi0}), changing  variables and Lemma \ref{level set leema}, then compare the result with (\ref{HPhi1}) to get the exact value of $M^{\lambda}$.

$\bullet$ For $\lambda < \infty$, we recall that
\[\Phi_{\lambda}(z_{1},z_{2}) =\Phi(z_{1},z_{2}+ \lambda z_{1}).\]
By (\ref{HPhi0}) and part (1) in Lemma \ref{level set leema},
\[H\Phi_{\lambda}(z_{1},z_{2})= (H\Phi) \bigl(z_{1} ,z_{2}+\lambda
z_{1}  \bigl)  = z^{\omega_{\lambda}}_{2} \widetilde{Q}_{\lambda}(z_{1},z_{2}),\]
where  $\widetilde{Q}_{\lambda}(z_{1},z_{2})$ is a homogeneous polynomial and  $\widetilde{Q}_{\lambda}(z_{1},0) \neq 0$ for each $z_{1} \neq 0$.
Then according to  part (2) in Lemma \ref{level set leema}, we have
\begin{align}\label{HPhi2}
H\widetilde{\Phi}_{\lambda}(z_{1},z_{2}) &= (H\Phi_{\lambda}) \bigl(z_{1}+\Psi^{\prime}(0)z_{2},z_{2} \bigl) = z^{\omega_{\lambda}}_{2} \overline{Q}_{\lambda}(z_{1},z_{2}),
\end{align}
where
 \[\overline{Q}_{\lambda}(z_{1},z_{2})=  \widetilde{Q}_{\lambda}(z_{1}+\Psi^{\prime}(0)z_{2},z_{2}).\]
 Then we have  $\overline{Q}(z_{1},0) \neq 0$ for each $z_{1} \neq 0$.
It follows from (\ref{HPhi1}) and (\ref{HPhi2}) that $M^{\lambda} = \omega_{\lambda}+2$.

$\bullet$ For $\lambda = \infty$, we recall that
\[\Phi_{\infty}(z_{1},z_{2}) =\Phi(z_{2}, z_{1}).\]
Then
\[H\Phi_{\infty}(z_{1},z_{2})= (H\Phi) \bigl(z_{2} ,
z_{1}  \bigl)  = z^{\alpha_{1}}_{2} \widetilde{Q}_{\infty}(z_{1},z_{2}),\]
where $\widetilde{Q}_{\infty}(z_{1},z_{2})$ is a homogeneous polynomial and  $\widetilde{Q}_{\infty}(z_{1},0) \neq 0$ for each $z_{1} \neq 0$. Then by part (2) in Lemma \ref{level set leema},
\begin{align}\label{HPhi3}
H\widetilde{\Phi}_{\infty}(z_{1},z_{2}) &= (H\Phi_{\infty}) \bigl( z_{1}+\Psi^{\prime}(0)z_{2},z_{2}  \bigl) = z^{\alpha_{1}}_{2} \overline{Q}_{\infty}( z_{1},z_{2}),
\end{align}
with $\overline{Q}_{\infty}( z_{1},z_{2})= \widetilde{Q}_{\infty}( z_{1}+\Psi^{\prime}(0)z_{2},z_{2})$. We have   $\overline{Q}_{\infty}(z_{1},0) \neq 0$ for each $z_{1} \neq 0$, then we obtain that $M^{\lambda} = \alpha_{1}+2$ by (\ref{HPhi1}) and (\ref{HPhi3}).

\section{Appendix}\label{appendix}
This appendix consists of two parts.
 Firstly, for the completeness of the paper, we explain how to prove the H\"{o}lder continuity property starting from the $L^{p} \rightarrow L^{q}$ bounded estimates of the local maximal operators in this paper. Once we obtain the H\"{o}lder continuity property of the local maximal operator, we can immediately obtain the weighted estimate for the corresponding  global maximal operator by combining it with Theorem 1.20 and Corollary 1.21 in \cite{LWZ}. Secondly,   we briefly explain how to obtain inequality (\ref{nonvanishcurvature}), which has   appeared in the proof of Theorem \ref{mainth1+}.

\subsection{H\"{o}lder continuity property}
This subsection is a brief overview of how we can prove the H\"{o}lder continuity property of the local maximal operator considered in this article, that is to prove that
\begin{equation} \label{continuity:ineq}
\biggl\|\sup_{t\in [1,2]} \biggl| A_{t}f(y+z) - A_{t}f(y)  \biggl| \biggl\|_{L^{q}} \lesssim |z|^{\epsilon} \|f\|_{L^{p}}
\end{equation} 
for some real number $\epsilon >0$ and any $z \in \mathbb{R}^{3}$, $|z| \le 1$, whenever $(\frac{1}{p},\frac{1}{q}) \neq (0,0)$ and belongs to the region $\Delta$ where $\mathcal{M}_{loc}$  is $L^{p} \rightarrow L^{q} $ bounded.
Since the hypersurface  studied in this paper are of finite type, the Fourier transform of their surface measures decays to a certain extent. Therefore, we can utilize the argument in Section 1.1.3 of reference \cite{LWZ} to deduce the problem to obtaining the inequality
\begin{align}
\biggl\|\sup_{t\in[1,2], h \in [0, \delta]} \biggl| A_{t+h}f  - A_{t}f  \biggl|  \biggl\|_{L^{q} } \lesssim \delta^{\epsilon_1} \|f\|_{L^{p} } \nonumber
\end{align}
for every $\delta \in (0,1)$ and some $\epsilon_{1}>0$. By the argument in the introduction, we reduce the problem to proving that
\begin{align}\label{goal}
\biggl\|\sup_{t\in[1,2], h \in [0, \delta]} \biggl| A_{t+h,\lambda}f  - A_{t,\lambda}f  \biggl|  \biggl\|_{L^{q}} \lesssim \delta^{\epsilon_1} \|f\|_{L^{p}}
\end{align}
for each $(\frac{1}{p},\frac{1}{q}) \in \Delta_{\lambda}$  such that $\mathcal{M}_{loc,\lambda}$ is $L^{p} \rightarrow L^{q}$ bounded, where $A_{t,\lambda}$ is defined by (\ref{avAlambda}). Noting that for each $(\frac{1}{p},\frac{1}{q}) \in \Delta_{\lambda}$,  we have
\[\biggl\|\sup_{t\in[1,2], h \in [0, \delta]} \biggl| A_{t+h,\lambda}f  - A_{t,\lambda}f  \biggl|  \biggl\|_{L^{q}} \lesssim \|f\|_{L^{p}},\]
then we can get inequality (\ref{goal}) by interpolating between this estimate and the results in the following theorem.

\begin{theorem}
The estimate (\ref{goal}) holds true  whenever \\
$\bullet$ $\Gamma(\lambda)$ is of Type A, $c=0$, $(\frac{1}{p},\frac{1}{q}) = (\frac{1}{2},\frac{1}{2})$; $c \neq 0$, $(\frac{1}{p},\frac{1}{q}) = (\frac{1}{8h_{\Phi}},\frac{1}{8h_{\Phi}})$.  \\
$\bullet$ $\Gamma(\lambda)$ is of Type B, $c=0$, $(\frac{1}{p},\frac{1}{q}) = (\frac{1}{2},\frac{1}{2})$; $c \neq 0$, $(\frac{1}{p},\frac{1}{q}) = (\frac{1}{8h_{\Phi}},\frac{1}{
8h_{\Phi}})$.
\end{theorem}
\begin{proof}
  After a dyadic decomposition and a scaling argument, we have
\begin{align}\label{holder1}
&\biggl\|\sup_{t\in[1,2], h \in [0, \delta]} \biggl| A_{t+h,\lambda}f  - A_{t,\lambda}f  \biggl|  \biggl\|_{L^{p}} \nonumber\\
 &\lesssim \sum_{k \ge 1} 2^{-2k} \biggl\|\sup_{t\in[1,2], h \in [0, \delta]} \biggl|  A^{\lambda}_{t+h,k} f  - A^{\lambda}_{t,k} f  \biggl|  \biggl\|_{L^{p}},
\end{align}
where $ A^{\lambda}_{t,k} $ is defined by
\begin{align}
  \int_{\mathbb{R}^{2}}f\bigl(y-t(2^{-k}x_{1}, 2^{-k}x_{2}, 2^{-mk} \Phi(x_{1},x_{2})+c  )\bigl)   \eta_{0}(\frac{x_{2}-\lambda x_{1}}{\epsilon x_{1}})\tilde{\eta}(x_{1},x_{2}) \rho_{0} (2^{-k}x_{1},2^{-k}x_{2})dx_{1}dx_{2}, \nonumber
\end{align}
 where $\eta_{0}$ and  $\rho_{0}$ are smooth cut-off functions supported in $B(0,1)$, $\tilde{\eta}$ is a smooth cut-off function with $\mathrm{supp}$ $\tilde{\eta} \subset \{x \in \mathbb{R}^{2}:  |x| \sim 1\}$.  By an isometric transformation, we are reduced to proving that
 \begin{align}\label{holderl2}
 \biggl\|\sup_{t\in[1,2], h \in [0, \delta]} \biggl| \widetilde{A^{\lambda}_{t+h,k}}f  - \widetilde{A^{\lambda}_{t,k}}f  \biggl|  \biggl\|_{L^{p} } \lesssim  2^{\epsilon(p)k}  \delta^{\epsilon_{1}} \|f\|_{L^{p}}
\end{align}
for some $\epsilon_{1}>0$ and some $p>1$, $\epsilon (p)<2$. Here, $ \widetilde{A^{\lambda}_{t,k}}$ is defined by (\ref{h0}).

$\bullet$  If $\Gamma(\lambda)$ is of Type A,  (\ref{holderl2}) follows if we can prove that
 \begin{align}\label{holderl2dec}
   \sum_{l \ge l_{0}} 2^{-l}   \biggl\|\sup_{t\in[1,2], h \in [0, \delta]} \biggl| \widetilde{A^{\lambda}_{t+h,k,l}}f - \widetilde{A^{\lambda}_{t,k,l}}f  \biggl|  \biggl\|_{L^{p}} \lesssim  2^{\epsilon(p)k} \delta^{\epsilon_{1}} \|f\|_{L^{p}},
\end{align}
where   $\widetilde{A^{\lambda}_{t,k,l}}$ denotes the averaging operator along the hypersurface
$S^{\lambda}_{k,l}$ defined by (\ref{hypersurface1})  whose  Gaussian curvature is non-vanishing.
 We further introduce the operator $\widetilde{A_{t,k,l,j}^{\lambda}}$ for each  integer $j \ge 0$, such that the Fourier transform of $\widetilde{A_{t,k,l,j}^{\lambda}}f$ is supported in $\{\xi \in \mathbb{R}^{3}: |\xi| \sim 2^{j}\}$ when $j \ge1$, and the  Fourier transform of $\widetilde{A_{t,k,l,0}^{\lambda}}f$ is supported in $\{\xi \in \mathbb{R}^{3}: |\xi|  \le 1\}$.
Sobolev  embedding and Plancherel's theorem imply that
\begin{align}\label{holderl2h1}
&\biggl\|\sup_{t\in[1,2], h \in [0, \delta]} \biggl| \widetilde{A^{\lambda}_{t+h,k,l,j}}f - \widetilde{A^{\lambda}_{t,k,l,j}}f \biggl|  \biggl\|_{L^{2}} \nonumber\\
&\lesssim \biggl(\int_{\mathbb{R}^{3}} \sup_{t\in [1,2]}\biggl(\int^{\delta}_{0}  \biggl| \frac{\partial}{\partial h} \bigl[ \widetilde{A^{\lambda}_{t+h,k,l,j}}f(y) \bigl]  \biggl|^{2}   dh \biggl)^{\frac{1}{2}} \biggl(\int^{\delta}_{0}\biggl| \widetilde{A^{\lambda}_{t+h,k,l,j}}f(y) - \widetilde{A^{\lambda}_{t,k,l,j}}f(y) \biggl|^{2} dh \biggl)^{1/2} dy \biggl)^{\frac{1}{2}} \nonumber\\
&\lesssim \biggl(\int_{\mathbb{R}^{3}}  \int^{\delta}_{0}  \sup_{t\in [1,2]} \biggl| \frac{\partial}{\partial h} \bigl[ \widetilde{A^{\lambda}_{t+h,k,l,j}}f(y) \bigl]  \biggl|^{2}   dh dy \biggl)^{\frac{1}{4}}   \biggl( \int_{\mathbb{R}^{3}}\int^{\delta}_{0} \sup_{t\in[1,2]} \biggl| \widetilde{A^{\lambda}_{t+h,k,l,j}}f(y) - \widetilde{A^{\lambda}_{t,k,l,j}}f(y) \biggl|^{2} dh  dy \biggl)^{\frac{1}{4}}  \nonumber\\
&\lesssim  \biggl( \int^{\delta}_{0} \int_{\mathbb{R}^{3}}  \sup_{t\in [1,2]} \biggl| \frac{\partial}{\partial h}\bigl[ \widetilde{A^{\lambda}_{t+h,k,l,j}}f(y)  \bigl]  \biggl|^{2}   dy dh \biggl)^{\frac{1}{4}}   \biggl( \int^{\delta}_{0} \int_{\mathbb{R}^{3}} \sup_{t\in[1,2+\delta]}  \biggl| \widetilde{A^{\lambda}_{t,k,l,j}}f(y)   \biggl|^{2}  dy dh  \biggl)^{\frac{1}{4}}  \nonumber\\
&\lesssim (1+ c2^{mk+n_{\lambda}l} ) \delta^{\frac{1}{2}} \|f\|_{L^{2}},
\end{align}
and
\begin{align}\label{holderl2h2}
\biggl\|\sup_{t\in[1,2], h \in [0, \delta]} \biggl| \widetilde{A^{\lambda}_{t+h,k,l,j}}f - \widetilde{A^{\lambda}_{t,k,l,j}}f \biggl|  \biggl\|_{L^{2} }  &\lesssim \biggl\|\sup_{t\in[1,2+\delta]} \biggl|\widetilde{A^{\lambda}_{t,k,l,j}}f \biggl|  \biggl\|_{L^{2} } \nonumber\\
&\lesssim (1+c2^{\frac{mk+n_{\lambda}l}{2}}) 2^{-\frac{j}{2}} \|f\|_{L^{2} }.
\end{align}
We notice that we have applied the fact that $\delta \in (0,1)$.

When $c=0$, interpolation between (\ref{holderl2h1}) and (\ref{holderl2h2}) implies that for $\epsilon_{1} \in (0,1)$, there holds
\begin{align}
\biggl\|\sup_{t\in[1,2], h \in [0, \delta]} \biggl| \widetilde{A^{\lambda}_{t+h,k,l,j}}f  - \widetilde{A^{\lambda}_{t,k,l,j}}f  \biggl|  \biggl\|_{L^{2}} \lesssim \delta^{\frac{\epsilon_{1}}{2}} 2^{-\frac{(1-\epsilon_{1})j}{2}} \|f\|_{L^{2}}. \nonumber
\end{align}
Summing over $j$, we get
\begin{align}
\biggl\|\sup_{t\in[1,2], h \in [0, \delta]} \biggl| \widetilde{A^{\lambda}_{t+h,k,l}}f  - \widetilde{A^{\lambda}_{t,k,l}}f  \biggl|  \biggl\|_{L^{2}} \lesssim \delta^{\frac{\epsilon_{1}}{2}} \|f\|_{L^{2}}. \nonumber
\end{align}
It follows   that
\begin{align}
   \sum_{l \ge l_{0}} 2^{-l}   \biggl\|\sup_{t\in[1,2], h \in [0, \delta]} \biggl| \widetilde{A^{\lambda}_{t+h,k,l}}f  - \widetilde{A^{\lambda}_{t,k,l}}f  \biggl|  \biggl\|_{L^{2}} \lesssim   \sum_{l \ge l_{0}} 2^{-l} \delta^{\frac{\epsilon_{1}}{2}}   \|f\|_{L^{2}} \lesssim   \delta^{\frac{\epsilon_{1}}{2}} \|f\|_{L^{2}}. \nonumber
\end{align}
Then (\ref{holderl2dec}) holds true when $p=2$  and $\epsilon(p)=0$.

When $c \neq 0$,
inequalities (\ref{holderl2h1}) and (\ref{holderl2h2}) imply that for $\epsilon_{1} \in (0,1)$, there holds
\begin{align}
\biggl\|\sup_{t\in[1,2], h \in [0, \delta]} \biggl| \widetilde{A^{\lambda}_{t+h,k,l,j}}f  - \widetilde{A^{\lambda}_{t,k,l,j}}f  \biggl|  \biggl\|_{L^{2} } \lesssim 2^{mk+n_{\lambda}l} \delta^{\frac{\epsilon_{1}}{2}} 2^{-\frac{(1-\epsilon_{1})j}{2}} \|f\|_{L^{2} }, \nonumber
\end{align}
this yields
\begin{align}\label{holderl2h4}
\biggl\|\sup_{t\in[1,2], h \in [0, \delta]} \biggl| \widetilde{A^{\lambda }_{t+h,k,l}}f  - \widetilde{A^{\lambda }_{t,k,l}}f \biggl|  \biggl\|_{L^{2} } \lesssim  2^{mk+n_{\lambda}l} \delta^{\frac{\epsilon_{1}}{2}}  \|f\|_{L^{2} }.
\end{align}
Moreover, we have the following trivial estimate,
\begin{align}\label{holderlinftyh}
\biggl\|\sup_{t\in[1,2], h \in [0, \delta]} \biggl| \widetilde{A^{\lambda }_{t+h,k,l}}f  - \widetilde{A^{\lambda }_{t,k,l}}f  \biggl|  \biggl\|_{L^{\infty} } \lesssim  \|f\|_{L^{\infty} }.
\end{align}
Interpolation between (\ref{holderl2h4}) and (\ref{holderlinftyh}) yields
\begin{align}
\biggl\|\sup_{t\in[1,2], h \in [0, \delta]} \biggl| \widetilde{A^{\lambda}_{t+h,k,l}}f  - \widetilde{A^{\lambda }_{t,k,l}}f  \biggl|  \biggl\|_{L^{p} } \lesssim 2^{\frac{2mk+2n_{\lambda}l}{p}} \delta^{\frac{\epsilon_{1}}{p}}  \|f\|_{L^{p} }. \nonumber
\end{align}
Setting $p=8h_{\Phi}$ and combining with $h_{\Phi}> \max\{\frac{m}{2},n_{\lambda} \}$, we have
\begin{align}
   \sum_{l \ge l_{0}} 2^{-l}   \biggl\|\sup_{t\in[1,2], h \in [0, \delta]} \biggl| \widetilde{A^{\lambda}_{t+h,k,l}}f  - \widetilde{A^{\lambda}_{t,k,l}}f  \biggl|  \biggl\|_{L^{8h_{\Phi}}}
   &\lesssim   \sum_{l \ge l_{0}} 2^{-l} 2^{\frac{2mk+2n_{\lambda}l}{8h_{\Phi}}} \delta^{\frac{\epsilon_{1}}{8h_{\Phi}}}  \|f\|_{L^{8h_{\Phi}}} \nonumber\\
  & \lesssim  2^{\frac{mk}{4h_{\Phi}}} \delta^{\frac{\epsilon_{1}}{8h_{\Phi}}} \|f\|_{L^{2}}. \nonumber
\end{align}
Hence,  (\ref{holderl2dec})   holds when  $p =8h_{\Phi}$ and $\epsilon(p)= \frac{m}{4h_{\phi}}<2$.

$\bullet$   If $\Gamma(\lambda)$ is of Type B, we introduce the averaging operator $\widetilde{A_{t,k,j}^{\lambda}}$ along the hypersurface $\tilde{S_{k}}$ defined by (\ref{hypersurface2}) for each integer $j \ge 0$, such that the Fourier transform of $\widetilde{A_{t,k,j}^{\lambda}}f$ is supported in $\{\xi \in \mathbb{R}^{3}: |\xi| \sim 2^{j}\}$ when $j \ge 1$, and the Fourier transform of $\widetilde{A_{t,k,0}^{\lambda}}f$ is supported in $\{\xi \in \mathbb{R}^{3}: |\xi| \le 1\}$.
Sobolev embedding and the maximal estimate (\ref{maximalL2}) yield
\begin{align}\label{holderl2tau1}
&\biggl\|\sup_{t\in[1,2], h \in [0, \delta]} \biggl| \widetilde{A^{\lambda}_{t+h,k,j}}f - \widetilde{A^{\lambda}_{t,k,j}}f \biggl|  \biggl\|_{L^{2}} \nonumber\\
&\lesssim  \biggl( \int^{\delta}_{0} \int_{\mathbb{R}^{3}}  \sup_{t\in [1,2]} \biggl| \frac{\partial}{\partial h}\bigl[ \widetilde{A^{\lambda}_{t+h,k,l,j}}f(y) \bigl]  \biggl|^{2}   dy dh \biggl)^{\frac{1}{4}}   \biggl( \int^{\delta}_{0} \int_{\mathbb{R}^{3}} \sup_{t\in[1,2+\delta]}  \biggl| \widetilde{A^{\lambda}_{t,k,l,j}}f(y)   \biggl|^{2}  dy dh  \biggl)^{\frac{1}{4}}  \nonumber\\
&\lesssim (1+c2^{mk}) \delta^{\frac{1}{2}} 2^{(\frac{1}{2}-\frac{1}{M})j} \|f\|_{L^{2} },
\end{align}
and
\begin{align}\label{holderl2tau2}
\biggl\|\sup_{t\in[1,2], h \in [0, \delta]} \biggl| \widetilde{A^{\lambda}_{t+h,k,j}}f - \widetilde{A^{\lambda}_{t,k,j}}f \biggl|  \biggl\|_{L^{2}}  &\lesssim \biggl\|\sup_{t\in[1,2+\delta]} \biggl|\widetilde{A^{\lambda}_{t,k,l,j}}f \biggl|  \biggl\|_{L^{2} } \nonumber\\
&\lesssim (1+c2^{\frac{mk}{2}})  2^{-\frac{j}{M}} \|f\|_{L^{2}}.
\end{align}
Notice that we only need to apply (\ref{maximalL2}) for $j \ge 1$. For $j=0$,  the above estimates follow directly from Sobolev embedding and Plancherel's theorem.

When $c=0$, interpolating between (\ref{holderl2tau1}) and (\ref{holderl2tau2}), we obtain that   for $\epsilon_{1} \in (0,\frac{2}{M})$, there holds
\begin{align}
\biggl\|\sup_{t\in[1,2], h \in [0, \delta]} \biggl| \widetilde{A^{\lambda}_{t+h,k,j}}f  - \widetilde{A^{\lambda}_{t,k,j}}f \biggl|  \biggl\|_{L^{2}} \lesssim \delta^{\frac{\epsilon_{1}}{2}} 2^{(\frac{\epsilon_{1}}{2}-\frac{1}{M})j} \|f\|_{L^{2}}. \nonumber
\end{align}
Summing over $j$, we get
\begin{align}
\biggl\|\sup_{t\in[1,2], h \in [0, \delta]} \biggl| \widetilde{A^{\lambda}_{t+h,k,j}}f  - \widetilde{A^{\lambda}_{t,k,j}}f  \biggl|  \biggl\|_{L^{2}} \lesssim \delta^{\frac{\epsilon_{1}}{2}} \|f\|_{L^{2}}, \nonumber
\end{align}
then   (\ref{holderl2}) remains valid   with  $p=2$  and $\epsilon(p)=0$.


When $c \neq 0$,
interpolating between (\ref{holderl2tau1}) and (\ref{holderl2tau2}), we obtain that   for $\epsilon_{1} \in (0,\frac{2}{M})$, there holds
\begin{align}
\biggl\|\sup_{t\in[1,2], h \in [0, \delta]} \biggl| \widetilde{A^{\lambda}_{t+h,k,j}}f  - \widetilde{A^{\lambda}_{t,k,j}}f  \biggl|  \biggl\|_{L^{2}} \lesssim 2^{mk} \delta^{\frac{\epsilon_{1}}{2}} 2^{(\frac{\epsilon_{1}}{2}-\frac{1}{M})j} \|f\|_{L^{2}}. \nonumber
\end{align}
It follows that
\begin{align}\label{holderl2tau4}
\biggl\|\sup_{t\in[1,2], h \in [0, \delta]} \biggl| \widetilde{A^{\lambda}_{t+h,k}}f  - \widetilde{A^{\lambda}_{t,k}}f  \biggl|  \biggl\|_{L^{2}} \lesssim 2^{mk}\delta^{\frac{\epsilon_{1}}{2}} \|f\|_{L^{2}}.
\end{align}
Interpolating between (\ref{holderl2tau4}) and the trivial estimate
\[
\biggl\|\sup_{t\in[1,2], h \in [0, \delta]} \biggl| \widetilde{A^{\lambda}_{t+h,k}}f  - \widetilde{A^{\lambda}_{t,k}}f  \biggl|  \biggl\|_{L^{\infty}} \lesssim   \|f\|_{L^{\infty}},\]
we can obtain that for each $2<p<\infty$,
\begin{align}
\biggl\|\sup_{t\in[1,2], h \in [0, \delta]} \biggl| \widetilde{A^{\lambda}_{t+h,k}}f  - \widetilde{A^{\lambda}_{t,k}}f  \biggl|  \biggl\|_{L^{p}} \lesssim 2^{\frac{2mk}{p}}\delta^{\frac{\epsilon_{1}}{p}} \|f\|_{L^{p}}. \nonumber
\end{align}
Combining with the fact that $h_{\Phi}> \frac{m}{2}$,  we obtain (\ref{holderl2})  with  $p =8h_{\Phi}$ and $\epsilon(p)= \frac{m}{4h_{\phi}}<2$.
\end{proof}

\subsection{A complementary discussion  of the proof of Theorem \ref{mainth1+}}\label{exmplain}
In the proof of Theorem \ref{mainth1+}, the estimate (\ref{nonvanishcurvature}) arises from a general conclusion. \\
\textbf{Theorem A. }\textit{Let $U$ be a compact set in $\mathbb{R}^{2}$, $\Psi$ be a function with the non-degenerate Hessian matrix, $S=\{(x_{1},x_{2},\Psi(x_{1},x_{2}) + c): (x_{1},x_{2}) \in U\}$ be a smooth hypersurface, and $c \in \mathbb{R}$. Then the $L^{p}  \rightarrow L^{q} $ norm of the local maximal operator along $S$ is bounded by $(1+c^{1/q})$ provided that $(\frac{1}{p}, \frac{1}{q}) \in \Delta_{0}$.}

 Here we outline the main idea behind the proof of this conclusion.
Upon Littlewood-Paley decomposition, when $\xi$ is constrained to $\{\xi: |\xi| \sim 2^{j}\}$, $j \ge 1$,  if $|\xi_{1}| + |\xi_{2}| \gg |\xi_{3}|$, it can be proven that  the Fourier transform along $S$ decays rapidly to complete the corresponding maximal function estimate. Next, we present the application of the method of the stationary phase  in
\[\widehat{d\mu_{S}}(t\xi)=\int_{\mathbb{R}^{2}}e^{-it[x_{1}\xi_{1}+x_{2}\xi_{2}+\Phi(x_{1},x_{2})\xi_{3}]} \eta(x_{1},x_{2})dx_{1}dx_{2}, \quad |\xi_{1}|+|\xi_{2}|\sim 2^{j}, |\xi_{1}|+|\xi_{2}| \lesssim |\xi_{3}|,\]
where $t \in [1,2]$, $\eta$ is a smooth cut-off function supported in $U$. According to Theorem 1.2.1 in \cite{sogge2}, the main domination part of $\widehat{d\mu_{S}}(t\xi)$ can be expressed as the sum of a finite number of terms of the following form
\[e^{-it\tilde{\Phi}(\xi)}\frac{a(t,\xi)}{(1+t|\xi|)},\]
where the function $\widetilde{\Phi}(\xi)$ is homogeneous of degree one and its Hessian matrix has a rank of $2$, $a(t,\xi)$ is a symbol of order zero in $\xi$.  Through Sobolev  embedding, we transform the $L^{p} \rightarrow L^{q}$ estimate of the maximal operator into the $L^{p}(\mathbb{R}^{3}) \rightarrow L^{q}(\mathbb{R}^{3} \times [1,2])$ estimate  of the following Fourier integral operator
\[\int_{\mathbb{R}^{3}}e^{iy\cdot \xi-it\tilde{\Phi}(\xi)}\frac{a(t,\xi)}{(1+t|\xi|)} \hat{f}(\xi) d\xi.\]
It is worth noting that the surface $\{(\eta,\tilde{\Phi}(\eta)): \eta \in \textmd{supp} a(t,\cdot)\}$ has two non-vanishing principal curvatures, which implies that the method of estimating for the corresponding Fourier integral operator is consistent with the proof described in Section 4 of reference \cite{WS}.

\subsection*{Acknowledgements}
This work is supported by the National Key R\&D Program of China (No.2023YFA1010800); Natural Science Foundation of China (No.12301113; No.12271435).

 All authors would like to extend their sincere gratitude to Prof. Detlef M\"{u}ller for his careful revisions to the overall proof of the manuscript, which significantly enhanced its readability. They also greatly appreciate Prof. Sanghyuk Lee and Dr. Stefan Buschenhenke for their valuable suggestions regarding some counterexamples in our paper.




\end{document}